\definecolor{marin}{rgb}   {0.,   0.3,   0.7} 
\definecolor{rouge}{rgb}   {0.8,   0.,   0.} 
\definecolor{sepia}{rgb}   {0.8,   0.5,   0.} 
\newtheorem{lemma}{Lemma}[section]
\newtheorem{theorem}[lemma]{Theorem}
\newtheorem{proposition}[lemma]{Proposition}
\newtheorem{corollary}[lemma]{Corollary}
\newtheorem{remark}[lemma]{Remark}
\newtheorem{example}[lemma]{Example}
\newtheorem{notation}[lemma]{Notation}
\newtheorem{definition}[lemma]{Definition}
\newtheorem{conclusion}[lemma]{Conclusion}
\numberwithin{equation}{section}
\newcommand{\QED}{\mbox{}\hfill \raisebox{-0.2pt}{\rule{5.6pt}{6pt}\rule{0pt}{0pt}} 
          \medskip\par}             
\newenvironment{Proof}{\noindent
    \parindent=0pt\abovedisplayskip = 0.5\abovedisplayskip
    \belowdisplayskip=\abovedisplayskip{\bfseries Proof. }}{\QED}
\newenvironment{Proofof}[1]{\noindent
    \parindent=0pt\abovedisplayskip = 0.5\abovedisplayskip
    \belowdisplayskip=\abovedisplayskip{\bfseries Proof of #1. }}{\QED}
\newcommand{\eps}{\varepsilon}
\newcommand{\dd}{\mathrm{d}}
\newcommand{\Ec}{\mathcal{E}}
\newcommand{\Ecs}{\mathcal{E}^{*}}
\newcommand{\rX}{\mathrm{X}}
\newcommand{\rD}{\mathrm{D}}
\newcommand{\rU}{\mathrm{U}}
\author[R. Horsin]{Romain Horsin}
\address{INRIA-Rennes Bretagne Atlantique and IRMAR (UMR 6625) Universit\'e de Rennes I} 
\email{romainhm24@gmail.com}
\urladdr{}
\thanks{This work was partially supported by the ERC starting grant GEOPARDI No. 279389}         
\title[On time-discretization of the 2D Euler equation by a symplectic Crouch-Grossman integrator]
{On time-discretization of the 2D Euler equation by a symplectic Crouch-Grossman integrator}
\begin{document}

\begin{abstract}
We consider time discretizations of the two-dimensional Euler equation written in vorticity form. The discretization method uses a Crouch-Grossman integrator that proceeds in two stages: first freezing the velocity vector field at the beginning of the time step, and then solve the resulting elementary transport equation by using a symplectic integrator to discretize in time the flow of the associated Hamiltonian differential equation. We prove that these schemes yield order one approximations of the exact solutions, and provide error estimates in Sobolev norms.
\end{abstract}

\keywords{Euler equation, transport equations, Crouch-Grossman integrator, Symplectic integrator, Backward Error Analysis}

\maketitle

\section{Introduction}

In this paper we consider time discretizations of the two-dimensional Euler equation for perfect incompressible fluids, written in vorticity form, and with periodic boundary conditions. The discretization in time will use Crouch-Grossman integrator. The reader will find details on these integrators in \cite{C-G}, but essentially we mean by using this denomination that one iteration in time requires two stages. Indeed, as we consider a transport equation associated with a non-autonomous Hamiltonian vector field, the first stage of the time-scheme is to freeze this Hamiltonian at the beginning of the time step. In a second time, we discretize in time the characteristics associated with the frozen Hamiltonian. For that we shall use the celebrated implicit midpoint method (see \cite{Erwan,HLW}), to preserve moreover the symplectic structure. Our purpose is to prove the convergence of the time scheme.\\
The vorticity formulation of the two-dimensional Euler equation in a periodic box reads
\begin{equation}
\label{euler2d}
\left\{
\begin{split}
&\partial_{t}\omega - \rU(\omega)\cdot\nabla \omega =0, \\
&\omega(0,x)=\omega_{0}(x),
\end{split}
\right.
\end{equation}
where $\omega(t,x) \in \mathbbm{R},$ with $t\in\mathbbm{R}_{+},$ and $x\in\mathbbm{T}^{2},$ the two-dimensional torus defined by
$$\mathbbm{T}^{2}=(\mathbbm{R}/2\pi \mathbbm{Z})^{2}.$$
The divergence-free velocity vector field $\rU(\omega)$ is given by the formula
$$\rU(\omega)= J \nabla \Delta^{-1} \omega$$
using the canonical symplectic matrix 
$$J= \left( \begin{array}{ccc}
0 & 1 \\
-1 & 0 \end{array} \right).$$
$\Delta^{-1}$ stands for the inverse of the Laplace operator on functions with average $0$ on $\mathbbm{T}^{2}$ (see appendix A), and $\nabla$ is the two-dimensional gradient operator.\\
Note that the vorticity form \eqref{euler2d} is formally equivalent to standard formulations of the Euler equation that may be found in the literature (see \cite{Constantin}).

In this paper we consider the time-discretization of \eqref{euler2d} by a Crouch-Grossman integrator (see \cite{C-G}), that proceeds in two stages. First, freezing the velocity vector field at the beginning of the time step, which gives the Hamiltonian transport equation
\begin{equation}
\label{eq-frozen}
\left\{
\begin{split}
&\partial_{t} f  - J\nabla \psi \cdot \nabla f= 0\\
& \Delta \psi=\omega_{0} 
\end{split}
\right.
\end{equation}
with initial data $\omega_{0}.$ \\
The second step is to discretize the flow of the vector field $J\nabla\psi$ by the implicit midpoint method. More precisely, we define $\Phi_{t}(x)$ as the unique solution of the implicit equation
$$\Phi_{t}(x)=x+tJ\nabla\psi\left(\frac{x+\Phi_{t}(x)}{2}\right),$$
and if $t$ is small enough, $\omega_{0}\circ \Phi_{t}(x)$ should be an approximation of the solution $f(t,x)$ of \eqref{eq-frozen}. The existence of an unique solution $\Phi_{t}(x)$ of the implicit equation requires regularity on $\psi,$ and will be carefully justified in Proposition \ref{Mid-topo} below.\\
From then we define the semi-discrete operator $\mathcal{S}_{t}$ by
\begin{equation}
\label{SD-OP}
\left\{
\begin{split}
&\mathcal{S}_{t}(\omega_{0})(x) = \omega_{0}\left(\Phi_{t}(x)\right),\\
&\Phi_{t}(x)=x+tJ\nabla\psi\left(\frac{x+\Phi_{t}(x)}{2}\right),\\
&\Delta\psi =\omega_{0}.
\end{split}
\right.
\end{equation}
If $\tau\in]0,1[$ is the time-step, we define a sequence $(\omega_{n})_{n\in\mathbbm{N}}$ by
\begin{equation}
\label{omega-n}
\left\{
\begin{split}
&\omega_{n}(x)=\mathcal{S}_{\tau}(\omega_{n-1}) = \mathcal{S}_{\tau}^{n}(\omega_{0})\\
&\mathcal{S}_{\tau}^{0}(\omega_{0})=\omega_{0}.
\end{split}
\right.
\end{equation}

The main result of the paper is that, if $\tau$ is small enough, $\omega_{n}(x)=\mathcal{S}_{t}^{n}(\omega_{0})(x)$ is an order one approximation in $H^{s}$ (Sobolev space, see definition \ref{Sobo} below) of $\omega(t,x)$ at time $t_{n}=n\tau,$ where $\omega$ solves \eqref{euler2d}.

To the best of our knowledge, there is currently no paper available in the literature in which such time-schemes are considered for the Euler equation. Another approach, based on optimal transport, has however been recently used in \cite{Geodesic1} (see also \cite{Geodesic2}) to discretize the Euler equation on a general domain $\Omega\in\mathbbm{R}^{d}$ with Lipschitz boundary. It is based on the fact that the expression of Euler's equation in Lagrangian coordinates may be seen as the equation of geodesics in the group made of the diffeomorphisms on $\Omega$ that preserve the restriction (to $\Omega$) of the Lebesgue measure, as noticed in \cite{Arnold}. From then the authors discretize in space by considering the equation of geodesics in a finite dimensional subspace of $L^{2}(\Omega,\mathbbm{R}^{d})$ (see also \cite{Brenier}). This approximate geodesics equation, which is a Hamiltonian ODE, is then discretized in time by using Euler's symplectic integrator. It is proven in \cite{Geodesic1} that these schemes yield approximations in space and time of strong solutions of the Euler equation.\\
The issue of space discretization has also recently been handled in \cite{Bardos}, where spectral methods are applied to the Burgers and Euler equations. The spectral methods can be seen as regularization procedures, as they consist in replacing the exact solutions of the Euler equations by their truncated Fourier series, where the modes of high frequencies are cut off. In practice, the method is implemented by using discrete Fourier series (see \cite{Fourier}). Difficulties arise from aliasing errors, which are overcome by the use of the $2/3$ de-asliasing method (see also \cite{Fourier}). The authors of \cite{Bardos} prove the convergence of this scheme in space, provided that the exact solutions have enough regularity.\\
The vorticity form of the 2D Euler equation may also be called the Guiding Center Model, which is also used to to describe the evolution of the charge density in a highly magnetized plasma in the transverse plane of a tokamak (see \cite{CG1} and \cite{CG2}). Papers \cite{CG1} and \cite{CG2} investigate full-discretizations of the Guiding Center Model by, respectively, forward semi-Lagrangian methods and backward semi-Lagrangian methods. See also \cite{CG4} and \cite{CG3}.

\section{Main result}

\subsection{Notations}

We shall consider functions defined on the two-dimensional torus, also seen as periodic functions on $\mathbbm{R}^{2},$ with period $2\pi$ in each variables. Therefore a function $f$ on $\mathbbm{T}^{2}$ will be usually written as
$$f(x)=f(x_1,x_2), \quad \mbox{with} \quad x=(x_1,x_2)\in [0,2\pi]\times [0,2\pi].$$
We will write
$$\langle x \rangle =\left(1+x_1^{2}+x_2^{2}\right)^{1/2}.$$
The notation $|\cdot|$ will in general refer to any norm on $\mathbbm{R}$ or $\mathbbm{R}^{2}.$ In the case of a two-dimensional integer $\alpha=(\alpha_{1},\alpha_{2})\in\mathbbm{N}^{2},$ we will write
$$|\alpha| =\alpha_{1}+\alpha_{2}.$$
For functions $f:\mathbbm{T}^{2} \to \mathbbm{C}$ and integers $\alpha=(\alpha_{1},\alpha_{2})\in\mathbbm{N}^{2},$ we will use the notation
$$\partial_{x}^{\alpha} f=\partial_{x_1}^{\alpha_{1}} \partial_{x_2}^{\alpha_{2}}f.$$
The operators $\nabla,$ $\Delta$ and $\nabla \cdot$ are defined by
$$\nabla f=(\partial_{x_1}f,\partial_{x_2}f)^\top, \quad \Delta =\partial_{x_1}^{2}f+\partial_{x_2}^{2}f, \quad \nabla \cdot \rX = \partial_{x_1} \rX_{1} + \partial_{x_2}\rX_{2},$$
where  $\rX$ is a two-dimensional vector field $\rX=\left(\rX_{1},\rX_{2}\right):\mathbbm{T}^{2}\to \mathbbm{R}^{2}.$ In particular, $\nabla^{2}f$ will be the Hessian matrix of $f.$\\
We shall also write 
$$\partial_{x}^{\alpha}\rX=\left(\partial_{x}^{\alpha}\rX_{1},\partial_{x}^{\alpha}\rX_{2}\right)^{\top}.$$
The differential of $\rX$ shall be denoted by $\rD_{x}\rX,$ and is classically defined by the formula
$$\rD_{x}\rX = \left(\partial_{x}^{(1,0)} \rX, \partial_{x}^{(0,1)} \rX\right).$$

\subsection{Functional framework}

For $p\in[1,+\infty[,$ $\ell^{p}(\mathbbm{Z}^{2})$ and $L^{p}(\mathbbm{T}^{2})$ are the classical Lebesgue spaces on $\mathbbm{Z}^{2}$ and $\mathbbm{T}^{2},$ respectively equipped with the norms
$$\left\| (u_{k})_{k\in\mathbbm{Z}^{2}} \right\|_{\ell^{p}(\mathbbm{Z}^{2})} = \left(\sum_{k\in\mathbbm{Z}^{2}} |u_{k}|^{p}\right)^{1/p} \quad \mbox{and} \quad \left\| f \right\|_{L^{p}(\mathbbm{T}^{2})} = \left( \int_{\mathbbm{T}^{2}} |f(x)|^{p}\dd x\right)^{1/p},$$
where $\dd x$ stands for the normalized Lebesgue measure on $\mathbbm{T}^{2}.$ We shall use the notation
$$\langle f,g \rangle_{L^{2}(\mathbbm{T}^{2})} = \int_{\mathbbm{T}^{2}} f(x)\overline{g(x)}\dd x$$
for the usual inner product associated with the norm $\| \cdot\|_{L^{2}(\mathbbm{T}^{2})}.$\\
We will also consider the Lebesgue spaces $\ell^{\infty}(\mathbbm{Z}^{2})$ and $L^{\infty}(\mathbbm{T}^{2}),$ respectively equipped with the norms
\begin{equation}
\notag
\begin{split}
&\left\| (u_{k})_{k\in\mathbbm{Z}^{2}} \right\|_{\ell^{\infty}(\mathbbm{Z}^{2})} = \sup_{k\in\mathbbm{Z}^{2}} |u_{k}| \\
&\left\| f \right\|_{L^{\infty}(\mathbbm{T}^{2})} =\sup\left\{ M \in\mathbbm{R} \hspace{2mm}  |  \hspace{2mm} \tilde{\lambda} \left(\left\{ x \hspace{2mm} | \hspace{2mm}  |f(x)| >M\right\}\right) = 0 \right\}.
\end{split}
\end{equation}
The two-dimensional Fourier coefficients of a function $f$ on $\mathbbm{T}^{2}$ are given by
$$\hat{f}_{k}=\int_{\mathbbm{T}^{2}}f(x)e^{-ik \cdot x} \dd x, \quad k\in \mathbbm{Z}^{2},$$
where $\cdot$ is the usual inner product on $\mathbbm{R}^{2}.$\\
The Fourier series of $f$ is defined by
$$\sum_{k\in\mathbbm{Z}^{2}} \hat{f}_{k}e^{ik \cdot x}.$$
For $s\in\mathbbm{R},$ $H^{s}(\mathbbm{T}^{2})$ is the periodic Sobolev space, equipped with the norm
\begin{equation}
\label{Sobo}
\left\| f \right\|_{H^{s}(\mathbbm{T}^{2})}  = \left(\sum_{k\in\mathbbm{Z}^{2}} |\hat{f}_{k}|^{2} \langle k \rangle^{2s}\right)^{1/2} \sim \left(\sum_{0\leq |\alpha|\leq s} \left\| \partial_{x}^{\alpha} f \right\|_{L^{2}(\mathbbm{T}^{2})}^{2}\right)^{1/2}.
\end{equation}
We refer for instance to \cite{Sobolev} for basic properties of the periodic Sobolev spaces. We shall essentially use the Sobolev embeddings
\begin{equation}
\label{Sob-inj1}
H^{1}(\mathbbm{T}^{2}) \hookrightarrow L^{4}(\mathbbm{T}^{2}),
\end{equation}
and 
\begin{equation}
\label{Sob-inj2}
H^{s}(\mathbbm{T}^{2}) \hookrightarrow L^{\infty}(\mathbbm{T}^{2}), \quad \mbox{for all } s>1.
\end{equation}
The same notations $L^{p}$ and $H^{s}$ will also be used for the Lebesgue and Sobolev spaces of vector-valued functions on $\mathbbm{T}^{2}.$ We shall use in addition the following Lemma (see Proposition 3.9 of \cite{Taylor3} for instance):

\begin{lemma}
\label{Kato}
Assume that $F:\mathcal{U} \to M_{2}(\mathbbm{R})$ is a $C^{\infty}$ map satisfying $F(0)=0,$ where $\mathcal{U}$ is an open subset of $M_{2}(\mathbbm{R})$ containing $0,$ and where $M_{2}(\mathbbm{R})$ is the space of $2\times 2$ square matrices with real coefficients. For any $s> 1$ and $A\in H^{s}(\mathbbm{T}^{2}),$ such that $A\in \mathcal{U}$ almost everywhere,
$$\left\| F(A)\right\|_{H^{s}(\mathbbm{T}^{2})} \leq C_{s}\left(\left\| A\right\|_{L^{\infty}(\mathbbm{T}^{2})}\right)\left(1 + \left\| A\right\|_{H^{s}(\mathbbm{T}^{2})}\right),$$
where $C_{s}:\mathbbm{R}_{+}\to \mathbbm{R}_{+}$ is an increasing continuous function.
\end{lemma}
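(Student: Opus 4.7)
The plan is to reduce the composition estimate to a Moser-type product inequality. The key inputs are that $F$ is smooth with $F(0)=0$, and that the Sobolev embedding \eqref{Sob-inj2} ensures $A$ takes values almost everywhere in a bounded subset of $M_{2}(\mathbbm{R})$ on which all derivatives of $F$ are uniformly controlled by a continuous increasing function of $\|A\|_{L^{\infty}(\mathbbm{T}^{2})}$.

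I would first treat the integer case $s\in\mathbbm{N}$, $s\geq 2$. For any multi-index $\alpha$ with $0<|\alpha|\leq s$, the Faà di Bruno chain rule yields
$$\partial_{x}^{\alpha}F(A) = \sum_{j=1}^{|\alpha|}\,\sum_{\substack{\beta_{1}+\cdots+\beta_{j}=\alpha \\ |\beta_{i}|\geq 1}} c_{\alpha,\beta}\, F^{(j)}(A)\bigl[\partial_{x}^{\beta_{1}}A,\ldots,\partial_{x}^{\beta_{j}}A\bigr],$$
where $F^{(j)}(A)$ is a $j$-linear map whose $L^{\infty}(\mathbbm{T}^{2})$ norm is bounded by a continuous increasing function of $\|A\|_{L^{\infty}(\mathbbm{T}^{2})}$. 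For the case $\alpha=0$, writing $F(A)=\bigl(\int_{0}^{1}F'(tA)\,\dd t\bigr)\cdot A$ using $F(0)=0$ immediately gives $\|F(A)\|_{L^{2}(\mathbbm{T}^{2})}\leq C(\|A\|_{L^{\infty}(\mathbbm{T}^{2})})\|A\|_{L^{2}(\mathbbm{T}^{2})}$.

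Next, the product of derivatives is estimated by Gagliardo-Nirenberg: setting $m=|\alpha|$ and $p_{i}=2m/|\beta_{i}|$, so that $\sum_{i}1/p_{i}=1/2$, the standard interpolation inequality on the torus gives
$$\|\partial_{x}^{\beta_{i}}A\|_{L^{p_{i}}(\mathbbm{T}^{2})} \leq C\|A\|_{H^{m}(\mathbbm{T}^{2})}^{|\beta_{i}|/m}\,\|A\|_{L^{\infty}(\mathbbm{T}^{2})}^{1-|\beta_{i}|/m},$$
after which Hölder delivers $\|(\partial_{x}^{\beta_{1}}A)\cdots(\partial_{x}^{\beta_{j}}A)\|_{L^{2}(\mathbbm{T}^{2})} \leq C\|A\|_{L^{\infty}(\mathbbm{T}^{2})}^{j-1}\|A\|_{H^{s}(\mathbbm{T}^{2})}$ since $\sum_{i}(1-|\beta_{i}|/m)=j-1$ and $m\leq s$. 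Combining with the $L^{\infty}$ bound on $F^{(j)}(A)$ and summing the finitely many terms yields the claim for integer $s$ via the equivalent norm on the right-hand side of \eqref{Sobo}.

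The main obstacle lies in the non-integer case, where neither Faà di Bruno nor a simple Leibniz rule is available. The standard remedy is paradifferential: one uses Bony's decomposition $F(A)=T_{F'(A)}A+R(A)$, where $T$ denotes a paraproduct controlled by $\|T_{g}u\|_{H^{s}(\mathbbm{T}^{2})}\lesssim \|g\|_{L^{\infty}(\mathbbm{T}^{2})}\|u\|_{H^{s}(\mathbbm{T}^{2})}$, and where the remainder $R(A)$ is smoother than $A$ by one derivative and so is handled inductively. Since the lemma is cited from Proposition 3.9 of \cite{Taylor3}, the full paradifferential machinery required to close the estimate for non-integer $s$ can be imported from there.
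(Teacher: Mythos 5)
The paper does not prove this lemma at all: it is quoted verbatim from Proposition 3.9 of \cite{Taylor3}, so there is no in-paper argument to compare against. Your proposal supplies a genuine proof for integer $s$, and it is the standard Moser-type argument: Fa\`a di Bruno, the fundamental-theorem-of-calculus trick $F(A)=\bigl(\int_{0}^{1}F'(tA)\,\dd t\bigr)A$ for the zero-order term (which is where $F(0)=0$ enters), and Gagliardo--Nirenberg interpolation $\|\partial_{x}^{\beta_i}A\|_{L^{2m/|\beta_i|}}\lesssim \|A\|_{H^{m}}^{|\beta_i|/m}\|A\|_{L^{\infty}}^{1-|\beta_i|/m}$ combined with H\"older; the exponent bookkeeping ($\sum_i 1/p_i=1/2$ and $\sum_i(1-|\beta_i|/m)=j-1$) is correct, and the factor $\|A\|_{L^{\infty}}^{j-1}$ is legitimately absorbed into the increasing function $C_s$. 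For non-integer $s$ you rightly note that this argument breaks down and that paralinearization is needed; deferring that case to the very reference the paper cites is no weaker than what the paper itself does, and in fact the only place the lemma is invoked (Lemma \ref{fieldSD}) would survive with integer regularity indices anyway. One small point worth tightening: the claim that all derivatives $F^{(j)}(A)$ are uniformly bounded by a function of $\|A\|_{L^{\infty}}$ tacitly requires the essential range of $A$ to lie in a \emph{compact} subset of $\mathcal{U}$, not merely in $\mathcal{U}$ almost everywhere; this imprecision is already present in the lemma as stated in the paper, and it is harmless in the application since there $A=tJ\nabla^{2}\psi$ is small in $L^{\infty}$ and $F$ is the Neumann series $(I_{2}+\frac12 A)^{-1}-I_{2}$, analytic on a fixed ball around $0$.
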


The two-dimensional Euler equation is globally well-posed in these Sobolev spaces. More precisely, we have the following result (see for instance the chapter 7 of \cite{Chemin} for a  proof):

\begin{theorem}
\label{existence}
Let $s>1$ and $\omega_{0}\in H^{s}(\mathbbm{T}^{2})$ with average $0.$ There exists an unique solution $\omega(t,x)\in C^{0}(\mathbbm{R}_{+}, H^{s}(\mathbbm{T}^{2}))\cap C^{1}(\mathbbm{R}_{+}, H^{s-1}(\mathbbm{T}^{2}))$ of equation \eqref{euler2d} with initial data $\omega_{0}.$
\end{theorem}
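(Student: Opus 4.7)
The plan is to follow the classical vorticity-energy argument for 2D Euler. First, I would construct approximate solutions $\omega_{\eps}$ by regularization — for instance, by replacing $\rU(\omega)$ by $\rU(\chi_{\eps}\ast\omega)$ where $\chi_{\eps}$ is a smooth mollifier — so that for each $\eps>0$ the problem becomes an ODE in $H^{s}(\mathbbm{T}^{2})$ with locally Lipschitz right-hand side, yielding a local classical solution. The core task is then to close a uniform-in-$\eps$ $H^{s}$ bound and pass to the limit.

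For the $H^{s}$ energy estimate, I apply $\partial_{x}^{\alpha}$ to the equation for $|\alpha|\leq s$ (taking $s$ integer, the non-integer case following via Bessel potentials), pair with $\partial_{x}^{\alpha}\omega_{\eps}$ in $L^{2}(\mathbbm{T}^{2})$, and use that $\nabla\cdot\rU(\omega_{\eps})=0$ so that the leading-order transport term drops out after integration by parts. What remains is controlled by the Kato-Ponce commutator estimate
$$\|[\partial_{x}^{\alpha},\rU\cdot\nabla]\omega\|_{L^{2}}\leq C\bigl(\|\nabla\rU\|_{L^{\infty}}\|\omega\|_{H^{s}} + \|\rU\|_{H^{s}}\|\nabla\omega\|_{L^{\infty}}\bigr),$$
and since $\rU=J\nabla\Delta^{-1}\omega$ gains one derivative, $\|\rU\|_{H^{s+1}}$ is controlled by $\|\omega\|_{H^{s}}$. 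This yields
$$\frac{d}{dt}\|\omega_{\eps}\|_{H^{s}}^{2}\leq C\|\nabla\rU(\omega_{\eps})\|_{L^{\infty}}\|\omega_{\eps}\|_{H^{s}}^{2}.$$

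The crucial two-dimensional ingredient is that the vorticity is transported by a divergence-free flow: if $\phi_{t}^{\eps}$ denotes the flow of $\rU(\chi_{\eps}\ast\omega_{\eps})$, then $\omega_{\eps}(t,\phi_{t}^{\eps}(x))=\omega_{0}(x)$, whence $\|\omega_{\eps}(t)\|_{L^{\infty}}=\|\omega_{0}\|_{L^{\infty}}$ for all $t\geq 0$. The Brezis-Gallouet logarithmic inequality then delivers
$$\|\nabla\rU(\omega_{\eps})\|_{L^{\infty}}\leq C\|\omega_{\eps}\|_{L^{\infty}}\bigl(1+\log(1+\|\omega_{\eps}\|_{H^{s}})\bigr),$$
and Gronwall converts the previous energy inequality into a double-exponential — hence finite on every compact time interval — bound on $\|\omega_{\eps}(t)\|_{H^{s}}$ uniform in $\eps$, precluding finite-time blowup and giving global existence of $\omega_{\eps}$.

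A standard compactness/Cauchy argument — bound $\omega_{\eps_{1}}-\omega_{\eps_{2}}$ in $L^{2}$ by energy methods using the transport structure, then interpolate with the uniform $H^{s}$ bound to upgrade convergence — produces a limit $\omega$ solving \eqref{euler2d}; the equation itself gives $\partial_{t}\omega\in C^{0}(\mathbbm{R}_{+},H^{s-1})$, and a Bona-Smith type argument upgrades continuity from $H^{s'}$ (for $s'<s$) to $H^{s}$. Uniqueness follows from an $L^{2}$ energy estimate on the difference $\eta=\omega^{1}-\omega^{2}$ of two solutions, whose evolution is controlled by $\|\nabla\omega^{2}\|_{L^{\infty}}\|\eta\|_{L^{2}}^{2}$ plus a contribution from $\rU(\eta)$ that is itself bounded by $\|\eta\|_{L^{2}}$, so Gronwall closes. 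The main obstacle is precisely the global closure of the $H^{s}$ bound: for $s$ only slightly above $1$, $H^{s}(\mathbbm{T}^{2})$ does not embed into $W^{1,\infty}$, so $\|\nabla\rU\|_{L^{\infty}}$ cannot be polynomially controlled by $\|\omega\|_{H^{s}}$, and it is the combination of the logarithmic Sobolev inequality with the two-dimensional conservation of $\|\omega\|_{L^{\infty}}$ that makes the argument work.
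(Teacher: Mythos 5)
The paper itself does not prove this theorem; it cites Chapter~7 of the Bahouri--Chemin--Danchin book. Your architecture --- mollification, uniform $H^{s}$ energy estimate, exact conservation of $\left\|\omega\right\|_{L^{\infty}}$ along the divergence-free flow, the Brezis--Gallouet logarithmic inequality, a double-exponential Gronwall bound, compactness plus a Bona--Smith argument, and $L^{2}$ uniqueness --- is precisely the standard route, and essentially what the cited reference carries out (in Besov spaces). However, the central analytic step does not close as written.

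The Kato--Ponce inequality you display bounds the commutator by $\left\|\nabla\rU\right\|_{L^{\infty}}\left\|\omega\right\|_{H^{s}}+\left\|\rU\right\|_{H^{s}}\left\|\nabla\omega\right\|_{L^{\infty}}$, and you then assert $\frac{\dd}{\dd t}\left\|\omega_{\eps}\right\|_{H^{s}}^{2}\leq C\left\|\nabla\rU(\omega_{\eps})\right\|_{L^{\infty}}\left\|\omega_{\eps}\right\|_{H^{s}}^{2}$. The second commutator term is not absorbed into that right-hand side: for $1<s\leq 2$ the quantity $\left\|\nabla\omega\right\|_{L^{\infty}}$ need not even be finite for $\omega\in H^{s}(\mathbbm{T}^{2})$ (since $H^{s-1}(\mathbbm{T}^{2})\not\hookrightarrow L^{\infty}$), and for larger $s$ it is not controlled by the conserved quantity $\left\|\omega\right\|_{L^{\infty}}$, so keeping it would reduce the energy inequality to a Riccati-type bound giving only local existence. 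Your remark that $\rU$ gains one derivative is indeed the key, but it must be used \emph{inside} the commutator estimate: the correct bound, obtained for instance via Bony's paraproduct decomposition as in the cited reference, is
\begin{equation*}
\left\|\left[\partial_{x}^{\alpha},\rU\cdot\nabla\right]\omega\right\|_{L^{2}}\leq C\left(\left\|\nabla\rU\right\|_{L^{\infty}}\left\|\omega\right\|_{H^{s}}+\left\|\omega\right\|_{L^{\infty}}\left\|\rU\right\|_{H^{s+1}}\right),
\end{equation*}
where the extra derivative is shifted onto $\rU$ (harmless, since $\left\|\rU\right\|_{H^{s+1}}\lesssim\left\|\omega\right\|_{H^{s}}$) in exchange for demoting $\left\|\nabla\omega\right\|_{L^{\infty}}$ to $\left\|\omega\right\|_{L^{\infty}}$. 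With that replacement one gets $\frac{\dd}{\dd t}\left\|\omega\right\|_{H^{s}}\lesssim\left(\left\|\nabla\rU\right\|_{L^{\infty}}+\left\|\omega\right\|_{L^{\infty}}\right)\left\|\omega\right\|_{H^{s}}$ and the rest of your argument goes through. The same low-regularity issue recurs in your uniqueness step, where $\left\|\nabla\omega^{2}\right\|_{L^{\infty}}$ should be replaced by a H\"older pairing $\left\|\rU(\eta)\right\|_{L^{p}}\left\|\nabla\omega^{2}\right\|_{L^{q}}$ with $q<2/(2-s)$. A separate, minor point: with only the velocity mollified, the approximate problem is a linear transport equation, not an ODE in $H^{s}$ with locally Lipschitz right-hand side; either mollify the full nonlinearity or construct the approximate solution by characteristics and iteration.
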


\subsection{Statement of the main result}

Our goal is to prove the following convergence Theorem:

\begin{theorem}
\label{convergence}
Let $s\geq 6$ and $\omega_{0}\in H^{s}(\mathbbm{T}^{2})$ with average $0.$ Let $\omega(t,x)\in C^{0}\left(\mathbbm{R}_{+},H^{s}(\mathbbm{T}^{2})\right)$ be the unique solution of equation \eqref{euler2d} given by Theorem \ref{existence}, with initial data $\omega_{0}.$ For a time step $\tau\in]0,1[,$ let $\left(\omega_{n}\right)_{n\in\mathbbm{N}}$ be the sequence of functions starting from $\omega_{0}$ and defined by formula \eqref{omega-n} from iterations of the semi-discrete operator \eqref{SD-OP}. For a fixed time horizon $T>0,$ let $B=B(T)$ be such that
$$\sup_{t\in [0,T]} \left\| \omega(t)\right\|_{H^{s}(\mathbbm{T}^{2})} \leq B.$$
There exists two positive constants $R_{0}$ and $R_{1},$ and an increasing continuous function $R:\mathbbm{R}_{+} \to \mathbbm{R}_{+},$ such that, if $\tau$ satisfies the hypothesis
$$\tau <\max\left(\frac{1}{R_{0}B}, \frac{B}{TR(B)e^{R_{1}T(1+B)}}\right),$$
the semi-discrete scheme enjoys the following convergence estimate: for all $n\in\mathbbm{N}$ such that $t_{n}=n\tau\leq T,$
$$\left\| \omega_{n}- \omega(t_{n})\right\|_{H^{s-4}(\mathbbm{T}^{2})}\leq \tau t_{n} R(B) e^{R_{1}T(1+B)}.$$
Moreover
$$R(B)\leq R_{1} \left(B+B^{3}\right).$$
\end{theorem}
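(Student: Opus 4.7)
My plan is a Lady--Windermere (telescoping) argument for convergence of the scheme, resting on three ingredients: (i) an a priori control of the iterates in $H^s$, (ii) a local consistency estimate in the weaker space $H^{s-4}$, and (iii) a nonlinear stability estimate of $\mathcal{S}_\tau$ in $H^{s-4}$ on bounded sets of $H^s$. For (i), invoking Proposition \ref{Mid-topo}, for $\|\omega\|_{H^s}\le R$ and $\tau R$ sufficiently small, $\Phi_\tau$ is a well-defined near-identity $H^s$-diffeomorphism. Differentiating the implicit relation $\Phi_\tau(x)-x=\tau J\nabla\psi((x+\Phi_\tau(x))/2)$ and applying Lemma \ref{Kato} to control the composition $\omega\circ\Phi_\tau$ yields $\|\mathcal{S}_\tau(\omega)\|_{H^s}\le \|\omega\|_{H^s}(1+\tau C(R))$. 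Iterating this and using the first restriction $\tau<1/(R_0 B)$, one fixes a larger constant $B'=B'(B,T)$ such that $\|\omega_n\|_{H^s}\le B'$ whenever $n\tau\le T$; this bootstrap is closed a posteriori by the final error bound.

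For the local consistency, I introduce the auxiliary frozen-transport solution $f_j$ of \eqref{eq-frozen} started at $\omega(t_j)$, and split
$$\mathcal{S}_\tau(\omega(t_j))-\omega(t_{j+1})=\bigl[\mathcal{S}_\tau(\omega(t_j))-f_j(\tau)\bigr]+\bigl[f_j(\tau)-\omega(t_{j+1})\bigr].$$
The freezing term $f_j(\tau)-\omega(t_{j+1})$ satisfies a transport equation with source $(\rU(\omega(t_j+\cdot))-\rU(\omega(t_j)))\cdot\nabla f_j$, which is $O(\tau)$ in $H^{s-2}$ by the Euler well-posedness, giving an $O(\tau^{2})$ bound after integration in time. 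The midpoint term $\mathcal{S}_\tau(\omega(t_j))-f_j(\tau)=\omega(t_j)\circ\Phi_\tau-\omega(t_j)\circ\phi_\tau$, where $\phi_\tau$ denotes the exact time-$\tau$ Hamiltonian flow of $\psi_j=\Delta^{-1}\omega(t_j)$, is estimated by Taylor-expanding the implicit midpoint equation (order-two consistency gives $\Phi_\tau-\phi_\tau=O(\tau^3)$ in a sufficiently smooth Sobolev norm) and then composing with $\omega(t_j)$, which costs one derivative. Accounting for the derivative losses in both estimates leads to
$$\|\mathcal{S}_\tau(\omega(t_j))-\omega(t_{j+1})\|_{H^{s-4}}\le \tau^{2}R(B).$$

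The hardest step is the nonlinear stability of $\mathcal{S}_\tau$ in $H^{s-4}$: for $\omega,\tilde\omega$ both bounded by $B'$ in $H^s$,
$$\|\mathcal{S}_\tau(\omega)-\mathcal{S}_\tau(\tilde\omega)\|_{H^{s-4}}\le (1+\tau R_1(1+B'))\,\|\omega-\tilde\omega\|_{H^{s-4}}.$$
One writes $\mathcal{S}_\tau(\omega)-\mathcal{S}_\tau(\tilde\omega)=(\omega-\tilde\omega)\circ\tilde\Phi_\tau+\omega\circ\Phi_\tau-\omega\circ\tilde\Phi_\tau$; the first term is handled by the Moser-type composition Lemma \ref{Kato}, and the second by a first-order Taylor expansion combined with the flow-difference estimate $\|\Phi_\tau-\tilde\Phi_\tau\|_{H^{s-3}}\le \tau C(B')\|\omega-\tilde\omega\|_{H^{s-4}}$. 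This last estimate follows by subtracting the two implicit midpoint equations, crucially exploiting the two-derivative gain in $\psi-\tilde\psi=\Delta^{-1}(\omega-\tilde\omega)$, and using Lemma \ref{Kato} to close the nonlinear fixed-point loop. This is precisely the step that locks in the loss of four derivatives and the Gronwall factor $R_1(1+B)$.

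With consistency and stability in hand, the Lady--Windermere fan decomposition
$$\omega_n-\omega(t_n)=\sum_{j=0}^{n-1}\mathcal{S}_\tau^{\,n-1-j}\bigl(\mathcal{S}_\tau(\omega(t_j))\bigr)-\mathcal{S}_\tau^{\,n-1-j}\bigl(\omega(t_{j+1})\bigr),$$
combined with $n-1-j$ iterations of the stability factor, gives
$$\|\omega_n-\omega(t_n)\|_{H^{s-4}}\le \sum_{j=0}^{n-1}(1+\tau R_1(1+B'))^{n-1-j}\tau^{2}R(B)\le \tau t_n R(B)\,e^{R_1 T(1+B)},$$
which is the announced estimate. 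The second restriction $\tau<B/(TR(B)e^{R_1 T(1+B)})$ is precisely what keeps the cumulated error below $B'$, so that the bootstrap on $\|\omega_n\|_{H^s}$ closes uniformly on $[0,T]$; the final bound $R(B)\le R_1(B+B^3)$ is tracked by keeping the polynomial dependence in the Kato composition and elliptic regularity estimates used throughout.
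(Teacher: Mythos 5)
Your overall architecture (local consistency via the freezing/midpoint splitting, Lipschitz stability of $\mathcal{S}_\tau$ in $H^{s-4}$ exploiting the two-derivative gain in $\Delta^{-1}(\omega-\tilde\omega)$, and a telescoping sum) is the same as the paper's, and your treatment of the two local errors matches Propositions \ref{error1} and \ref{error2}. The genuine gap is in your step (i) and in the way your fan decomposition consumes it. The one-step bound $\|\mathcal{S}_\tau(\omega)\|_{H^s}\le(1+\tau C(R))\|\omega\|_{H^s}$ has $C(R)$ growing at least linearly in $R$ (the underlying transport energy estimate is quadratic; cf.\ Proposition \ref{numstab}), so iterating it yields $\|\omega_n\|_{H^s}\le B\,e^{TC(B')}$, and the self-consistency requirement $B\,e^{TC(B')}\le B'$ has no solution $B'$ once $T\gtrsim 1/B$. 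Nor can this bootstrap be ``closed a posteriori by the final error bound'': that bound lives in $H^{s-4}$ and gives no control whatsoever on the $H^{s}$ (or even $H^{s-3}$) norms of the iterates. Since your stability estimate requires \emph{both} arguments to lie in an $H^s$-ball of radius $B'$, and your Lady--Windermere fan applies $\mathcal{S}_\tau^{\,n-1-j}$ to restarted numerical trajectories $\mathcal{S}_\tau^{\,k}(\omega(t_{j+1}))$, the whole argument hinges on precisely the uniform strong-norm bound you cannot produce.

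The paper avoids this through two structural choices. First, its stability estimate (Corollary \ref{numstab2}) is asymmetric: the Lipschitz factor is $e^{\tau R_1(1+\|u\|_{H^{s-3}})}$ with $u$ a designated argument, and in the one-step error recursion $e_{n+1}\le e^{\tau R_1(1+B)}e_n+R(B)\tau^2$ that argument is always the \emph{exact} solution $\varphi_{E,t_n}(\omega_0)$, bounded by $B$ in $H^s$ by hypothesis; using the one-step recursion rather than the full fan through restarted numerical flows ensures every application of stability involves the exact trajectory. Second, the only quantity bootstrapped on the numerical iterates is the weak norm $\|\omega_n\|_{H^2}$: since $s\ge 6$, one first runs the a priori estimate (Proposition \ref{global}) at the minimal regularity $s=6$, where $H^{s-4}=H^2$, so that the convergence estimate itself yields $\|\omega_k\|_{H^2}\le B+\tau t_k R(B)e^{R_1T_0(1+B)}\le 2B$ under the second smallness condition on $\tau$, closing the induction; the full-regularity estimate is then obtained in a second pass. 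To repair your proof you should either adopt this asymmetric-stability, one-step-recursion structure, or reformulate your stability lemma so that its constants depend only on norms of the iterates that are actually controlled by the $H^{s-4}$ error bound.
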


Let us make the following comments:

\medskip

{\bf a)} The convergence estimate depends on $B=B(T),$ the bound for the $H^{s}$ norm of the exact solution on $[0,T].$ It is well known that the best upper bound for $B(T)$ is a double exponential in time, namely
$$\ln(B(T)) \lesssim \left(1+\ln^{+}\left(\left\|\omega_{0}\right\|_{H^{s}(\mathbbm{T}^{2})}\right)\right)e^{CT}-1,$$
where $\ln^{+}=\ln\mathbbm{1}_{(1+\infty)}.$

\medskip

{\bf b)} Although we used the implicit midpoint integrator, which is known to be in general of order two, the global error scales in $\tau.$ This is due to the freezing effect, {\it ie} the fact that the error in Sobolev space between the solution $f$ of \eqref{eq-frozen} and $\omega$ at time $t$ only scales in $t.$ An interesting perspective would certainly be to reach a global error of order two, for instance by freezing the velocity vector field at the middle of the time step. This will be subject to further investigations.\\
The use of a symplectic integrator is essential in our problem: through area preservation, it ensures that for all $n,$ $\omega_{n}$ has average zero, so that one may solve the Poisson equation with RHS (right-hand side) $\omega_{n},$ and then define rigorously $\omega_{n+1}.$ Although the proof uses as well extensively the special structure of the midpoint integrator, which is the composition of Euler's backward and forward integrator with half time steps, it is therefore possible that our result may be extended to a larger class of symplectic methods.\\
The restriction on the regularity $s\geq 6$ comes from the fact that we shall prove that the local error attributable to the midpoint integrator scales in $\tau^{3},$ as expected. Smaller values of $s$ should be admissible if we are willing to let the local midpoint error to scale in $\tau^{2}$ only, which is the case for the freezing error anyway. In view of \eqref{SD-OP}, one should require the numerical velocity vector field $\mathrm{U}(\omega_{n})$ to be at least Lipschitz, in order to solve the implicit midpoint equation. This should impose at least the restriction $s\geq 4.$

\medskip

{\bf c)} Our proof may be compared with the classical Backward Arror Analysis methods used in geometric numerical integration (see \cite{HLW}): we simply use the fact that the semi-discrete operator $\mathcal{S}_{\tau}(\omega_{0})$ coincides, at time $t=\tau,$ with $\mathcal{S}_{t}(\omega_{0}),$ and it turns out that $\mathcal{S}_{t}(\omega_{0})$ satisfies a transport equation. The local consistency errors are then obtained by means of standard energy estimates for transport equations, with a commutator trick. From that perspective our proof may be related to the paper \cite{CCFM}, where convergence estimates are proved for time-discretizations of the Vlasov-Poisson equation by splitting methods, by means of stability estimates for the associated transport operator.\\
From that perspective, let us say that the result of this paper is not really tied up with the Euler equation, as we really only use the transport structure of the vorticity formulation \eqref{euler2d}. The fact that $\mathrm{U}(\omega)$ belongs to $H^{s+1}$ when $\omega$ belongs to $H^{s}$ is in fact the main feature of the equation that we use in the proof. Therefore our work should apply to a larger class of transport equations, if their velocity vector field satisfies a similar property. Moreover, the divergence-free property of $\mathrm{U}$ does not seem to be mandatory, and having a bounded divergence should be sufficient.

\medskip

{\bf d)} This result concerns time discretizations only. Fully-discrete schemes should moreover involve an interpolation procedure at each step. With periodic boundary conditions it seems natural to use an interpolation by trigonometric polynomials, {\it ie} discrete Fourier series (see \cite{Fourier}). However it is likely that this method involves aliasing errors preventing the stability of the scheme. Another possibility would be to consider the Euler equation on a polygonal domain, and to use Finite Element Method for the interpolation in space. Nevertheless, there exists on such domains solutions of the Euler equation with $H^{2}$ regularity (see section 6 of chapter 3 in \cite{Taylor}), but not better, as far as we know. In view of the discussion on regularity restrictions in {\bf b)}, considering Finite Element Methods should therefore bring technical complications in the proof, where we might need to add a regularization procedure at each step. This will be subject to further investigations.\\

The rest of the paper is organized as follows: In section $3$ we prove general stability estimates for certain transport equations, which will be the main tool of the paper. In section 4 we prove the stability of the semi-discrete operator $\mathcal{S}_{t}.$ In section 5, we analyse the local errors attributable to the freezing of the velocity vector field and to the midpoint discretization, by means described in {\bf c)}, and prove the main result. In the Appendix, we recall, for completion, standard results for the Poisson equation on the torus.

\section{Stability estimates for the exact flows}

\subsection{Notations}

Let $s\geq 2$ and $\omega_{0}\in H^{s}(\mathbbm{T}^{2}).$ The solution $\omega(t,x)$ of the Euler equation with initial data $\omega_{0}$ given by Theorem \ref{existence}, namely
\begin{equation}
\label{exact}
\left\{
\begin{split}
&\partial_{t}\omega - J\nabla \Delta^{-1}\omega \cdot \nabla \omega=0\\
&\omega(t=0,x)=\omega_{0}(x),
\end{split}
\right.
\end{equation}
will be from now on written
\begin{equation}
\label{flow-ex}
\varphi_{E,t}(\omega_{0})(x) = \omega(t,x).
\end{equation}
Let $\psi$ be the solution of the Poisson equation $\Delta \psi = \omega_{0}.$ Proposition \ref{poisson-reg} and the Sobolev embedding \eqref{Sob-inj2} imply that
$$\sup_{0\leq |\alpha| \leq s} \left\| \partial_{x}^{\alpha} \psi \right\|_{L^{\infty}}  \leq C\left\| \psi \right\|_{H^{s+2}(\mathbbm{T}^{2})} \leq C\left\| \omega_{0} \right\|_{H^{s}(\mathbbm{T}^{2})}.$$
The Cauchy-Lipschitz Theorem ensures then that the flow $\Psi_{t}(x)$ associated with the vector field $J\nabla \psi$ is well-defined and exists globally in time, and the function $f(t,x) =\omega_{0}(\Psi_{t}(x))$ solves globally in time the frozen equation
\begin{equation}
\label{glace}
\left\{
\begin{split}
&\partial_{t}f- J\nabla \psi\cdot \nabla f =0 \\
&\Delta \psi =\omega_{0}\\
&f(t=0,x)=\omega_{0}(x)
\end{split}
\right.
\end{equation}
with initial data $\omega_{0}.$ We shall write as previously
\begin{equation}
\label{flow-froz}
\varphi_{F,t}(\omega_{0})(x)=f(t,x),
\end{equation}

\subsection{A stability Lemma for some transport equations}

We first prove, in the Lemma below, estimates for transport operators, that will in particular apply to Euler's equation and to the frozen equation.\\
For a two-dimensional vector field $\rX:\mathbbm{T}^{2}\to \mathbbm{R}^{2},$ let $L(\rX) $ be the operator defined for functions $g$ by
\begin{equation}
\label{transp-OP}
L(\rX)g=\rX\cdot \nabla g,
\end{equation}
and let us define for $\alpha\in\mathbbm{N}^{2}$ the commutator
\begin{equation}
\label{com}
\left[ \partial_{x}^{\alpha}, L(\rX)\right]=\partial_{x}^{\alpha} L(\rX) - L(\rX) \partial_{x}^{\alpha}.
\end{equation}

\begin{lemma}
\label{transport}
For any $s\geq 0,$ there exists a constant $C>0$ such that, for indices $\alpha\in\mathbbm{N}^{2}$ with $|\alpha| \leq s,$ vector fields $\rX$ and functions $g,$ we have
\begin{equation}
\label{transport1}
\left\| \left[ \partial_{x}^{\alpha}, L(\rX)\right] g\right\|_{L^{2}(\mathbbm{T}^{2})} \leq C \left\| \rX \right\|_{H^{s+1}(\mathbbm{T}^{2})} \left[ \left\| g \right\|_{H^{s}(\mathbbm{T}^{2})}  + \left\| g \right\|_{H^{2}(\mathbbm{T}^{2})} \right],
\end{equation}
\begin{equation}
\label{transport1b}
\left\| \left[ \partial_{x}^{\alpha}, L(\rX)\right] g\right\|_{L^{2}(\mathbbm{T}^{2})} \leq C \left\| \rX \right\|_{H^{s+2}(\mathbbm{T}^{2})}\left[ \left\| g \right\|_{H^{s}(\mathbbm{T}^{2})}  + \left\| g \right\|_{H^{1}(\mathbbm{T}^{2})} \right],
\end{equation}
\begin{equation}
\label{transport1c}
\left\| \left[ \partial_{x}^{\alpha}, L(\rX)\right] g\right\|_{L^{2}(\mathbbm{T}^{2})} \leq C \left\| \rX \right\|_{H^{s}(\mathbbm{T}^{2})}\left[ \left\| g \right\|_{H^{s}(\mathbbm{T}^{2})}  + \left\| g \right\|_{H^{3}(\mathbbm{T}^{2})} \right],
\end{equation}
and 
\begin{equation}
\label{transport2}
\left\| \partial_{x}^{\alpha}L(\rX) g \right\|_{L^{2}(\mathbbm{T}^{2})}\leq C \left\| \rX \right\|_{H^{s+1}(\mathbbm{T}^{2})} \left[ \left\| g \right\|_{H^{s+1}(\mathbbm{T}^{2})}  + \left\| g \right\|_{H^{2}(\mathbbm{T}^{2})} \right].
\end{equation}
\end{lemma}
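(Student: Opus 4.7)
The plan is to expand the commutator by the Leibniz rule and bound each resulting product via Hölder's inequality, trading integrability for regularity through the Sobolev embeddings \eqref{Sob-inj1} and \eqref{Sob-inj2}. Since $\partial_x^\alpha$ commutes with $\nabla$, the Leibniz formula yields the pointwise identity
\[
\left[\partial_x^\alpha, L(\rX)\right]g = \sum_{\substack{\beta\leq\alpha\\ \beta\neq 0}}\binom{\alpha}{\beta}\,(\partial_x^\beta \rX)\cdot(\partial_x^{\alpha-\beta}\nabla g),
\]
so every summand carries at least one derivative on $\rX$; this structural cancellation is exactly why the highest-order derivative of $g$ never appears unpaired and why each bound loses only one derivative compared with a crude product estimate.

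For each summand with $|\beta|=k\in\{1,\dots,|\alpha|\}$, I would bound the $L^2$ norm by Hölder's inequality with exponents $1/p+1/q=1/2$, choosing $p,q$ according to where the highest-derivative factor is to be placed. For \eqref{transport1}, when $k=1$ I place $\partial_x^\beta \rX$ in $L^\infty$ (controlled by $\|\rX\|_{H^{s+1}(\mathbbm{T}^{2})}$ through \eqref{Sob-inj2}) and $\partial_x^{\alpha-\beta}\nabla g$ in $L^2$ (controlled by $\|g\|_{H^{s}(\mathbbm{T}^{2})}$); when $k=|\alpha|$ the remaining factor is $\nabla g$, which together with $\partial_x^\alpha \rX$ is placed in $L^4\cdot L^4$, yielding $\|\rX\|_{H^{s+1}(\mathbbm{T}^{2})}\|g\|_{H^{2}(\mathbbm{T}^{2})}$ through \eqref{Sob-inj1}; intermediate values of $k$ are treated by the same $L^4\cdot L^4$ pairing, which costs at most one derivative on each side and is absorbed by $\|g\|_{H^{s}(\mathbbm{T}^{2})}+\|g\|_{H^{2}(\mathbbm{T}^{2})}$. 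The variants \eqref{transport1b} and \eqref{transport1c} correspond to asymmetric trade-offs of the same identity: in \eqref{transport1b} the extra regularity $\rX\in H^{s+2}(\mathbbm{T}^{2})$ lets one systematically place $\partial_x^\beta \rX$ in $L^\infty$, so that only $\|g\|_{H^{1}(\mathbbm{T}^{2})}$ is needed on the other side; in \eqref{transport1c} the extra $g\in H^{3}(\mathbbm{T}^{2})$ conversely lets one place $\partial_x^{\alpha-\beta}\nabla g$ in $L^\infty$ and reduce the demand on $\rX$ to $H^{s}(\mathbbm{T}^{2})$. Finally, \eqref{transport2} follows from the same expansion without excluding $\beta=0$: the additional term $\rX\cdot\partial_x^\alpha\nabla g$ is bounded directly by $\|\rX\|_{L^\infty(\mathbbm{T}^{2})}\|g\|_{H^{s+1}(\mathbbm{T}^{2})}\leq C\|\rX\|_{H^{s+1}(\mathbbm{T}^{2})}\|g\|_{H^{s+1}(\mathbbm{T}^{2})}$, again via \eqref{Sob-inj2}.

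The main technical nuisance is the borderline two-dimensional embedding: because $H^{1}(\mathbbm{T}^{2})\not\hookrightarrow L^\infty(\mathbbm{T}^{2})$, one cannot invoke a one-line tame Kato--Ponce product inequality, and must instead combine the two embeddings \eqref{Sob-inj1} and \eqref{Sob-inj2} in a case split on the multiplicity $|\beta|$ as described above. Beyond this bookkeeping, all the remaining steps amount to routine arithmetic checks on the Sobolev indices attached to each summand of the Leibniz expansion.
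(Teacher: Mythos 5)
Your overall strategy is exactly the paper's: expand the commutator by the Leibniz rule (so that the $\beta=0$ term cancels against $L(\rX)\partial_x^\alpha g$), then estimate each summand $\partial_x^\beta\rX\cdot\partial_x^\gamma\nabla g$ (with $\beta+\gamma=\alpha$, $\beta\neq 0$) by H\"older's inequality combined with the embeddings \eqref{Sob-inj1} and \eqref{Sob-inj2}, splitting into cases according to how many derivatives fall on each factor. Your index bookkeeping for \eqref{transport1} and \eqref{transport1b} is correct and reproduces the paper's argument (the paper splits on $|\gamma|$ rather than $|\beta|$, which is the same thing).

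The one step that would fail as written is your recipe for \eqref{transport1c}: you propose to place $\partial_x^{\alpha-\beta}\nabla g$ in $L^\infty$ so as to charge $\rX$ only with its $H^s$ norm. For the summand with $|\beta|=1$ and $|\gamma|=|\alpha|-1$ (take $|\alpha|=s\geq 2$), this costs $\left\|\partial_x^\gamma\nabla g\right\|_{L^\infty}\leq C\left\|g\right\|_{H^{s+2}(\mathbbm{T}^{2})}$, which is not controlled by $\left\|g\right\|_{H^{s}(\mathbbm{T}^{2})}+\left\|g\right\|_{H^{3}(\mathbbm{T}^{2})}$; the $L^\infty$ placement on the $g$-factor is affordable only when $|\gamma|\leq s-3$, and in particular when $\gamma=0$. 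The correct split (the paper's) is three-way: for $|\gamma|\geq 2$ put $\partial_x^\beta\rX$ in $L^\infty$, which costs $\left\|\rX\right\|_{H^{|\beta|+2}}$ with $|\beta|+2\leq|\alpha|\leq s$ precisely because $|\gamma|\geq 2$; for $|\gamma|=1$ use the $L^4\cdot L^4$ pairing, giving $\left\|\rX\right\|_{H^{|\beta|+1}}\left\|g\right\|_{H^{3}}\leq\left\|\rX\right\|_{H^{s}}\left\|g\right\|_{H^{3}}$; and only for $\gamma=0$ put $\nabla g$ in $L^\infty$, giving $\left\|\rX\right\|_{H^{|\alpha|}}\left\|g\right\|_{H^{3}}$. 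A much more minor caveat of the same kind concerns the $\beta=0$ term of \eqref{transport2} at the endpoint $s=0$, where $H^{s+1}=H^{1}(\mathbbm{T}^{2})\not\hookrightarrow L^{\infty}(\mathbbm{T}^{2})$ and one should use the $L^4\cdot L^4$ pairing instead; for the values of $s$ actually used in the paper this is immaterial.
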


\begin{proof}
We will prove estimates \eqref{transport1}, \eqref{transport1b} and \eqref{transport1c}. The proof of estimate \eqref{transport2} is almost identical to the proof of estimate \eqref{transport1}, and easier, so we will not detail it.\\
All of this is obvious when $|\alpha|=0,$ and for $|\alpha|\geq 1,$ the starting point is to use Leibniz's formula as follows
$$\left[\partial_{x}^{\alpha},L(\rX)\right]g= \partial_{x}^{\alpha}(\rX\cdot \nabla g)- \rX\cdot \partial_{x}^{\alpha} \nabla g=\sum_{\underset{\gamma \neq \alpha}{\beta + \gamma = \alpha}} {\alpha \choose \beta} \partial_{x}^{\beta} \rX\cdot \partial_{x}^{\gamma} \nabla g.$$
Any term of the sum may be estimated using the Sobolev embeddings \eqref{Sob-inj1} or \eqref{Sob-inj2} in several ways, which will produce the three estimates \eqref{transport1}, \eqref{transport1b} and \eqref{transport1c}.

\subsubsection*{Proof of estimate \eqref{transport1}}

When $|\gamma|\geq 1,$ we have by the Sobolev embedding \eqref{Sob-inj2}
\begin{multline*}
\left\| \partial_{x}^{\beta} \rX\cdot \partial_{x}^{\gamma} \nabla g \right\|_{L^{2}(\mathbbm{T}^{2})}\leq \left\| \partial_{x}^{\beta} \rX \right\|_{L^{\infty}(\mathbbm{T}^{2})}\left\| \partial_{x}^{\gamma} \nabla g \right\|_{L^{2}(\mathbbm{T}^{2})} \leq C \left\| \rX\right\|_{H^{|\beta|+2}(\mathbbm{T}^{2})} \left\| g\right\|_{H^{|\gamma|+1}(\mathbbm{T}^{2})} \\
\leq C  \left\| \rX\right\|_{H^{|\alpha|-|\gamma|+2}(\mathbbm{T}^{2})} \left\| g\right\|_{H^{s}(\mathbbm{T}^{2})} \leq C  \left\| \rX\right\|_{H^{s+1}(\mathbbm{T}^{2})} \left\| g\right\|_{H^{s}(\mathbbm{T}^{2})}. 
\end{multline*}
When $|\gamma|=0,$ we have by H\"older's inequality and the Sobolev embedding \eqref{Sob-inj1}
\begin{multline*}
\left\| \partial_{x}^{\beta} \rX\cdot  \nabla g \right\|_{L^{2}(\mathbbm{T}^{2})}\leq \left\| \partial_{x}^{\beta} \rX \right\|_{L^{4}(\mathbbm{T}^{2})}\left\| \nabla g \right\|_{L^{4}(\mathbbm{T}^{2})} \leq C \left\| \rX\right\|_{H^{|\beta|+1}(\mathbbm{T}^{2})} \left\| g\right\|_{H^{2}(\mathbbm{T}^{2})} \\
\leq C  \left\| \rX\right\|_{H^{|\alpha|+1}(\mathbbm{T}^{2})} \left\| g\right\|_{H^{2}(\mathbbm{T}^{2})} \leq C  \left\| \rX\right\|_{H^{s+1}(\mathbbm{T}^{2})} \left\| g\right\|_{H^{2}(\mathbbm{T}^{2})}.
\end{multline*}
This proves estimate \eqref{transport1}.

\subsubsection*{Proof of estimate \eqref{transport1b}}

It suffices to use Sobolev's embedding \eqref{Sob-inj2} for any $\gamma \neq \alpha,$ as follows
\begin{multline*}
\left\| \partial_{x}^{\beta} \rX\cdot \partial_{x}^{\gamma} \nabla g \right\|_{L^{2}(\mathbbm{T}^{2})}\leq \left\| \partial_{x}^{\beta} \rX \right\|_{L^{\infty}(\mathbbm{T}^{2})}\left\| \partial_{x}^{\gamma} \nabla g \right\|_{L^{2}(\mathbbm{T}^{2})} \leq C \left\| \rX\right\|_{H^{|\beta|+2}(\mathbbm{T}^{2})} \left\| g\right\|_{H^{|\gamma|+1}(\mathbbm{T}^{2})} \\
\leq C  \left\| \rX\right\|_{H^{|\alpha|-|\gamma|+2}(\mathbbm{T}^{2})} \left\| g\right\|_{H^{|\gamma|+1}(\mathbbm{T}^{2})} \leq C  \left\| \rX\right\|_{H^{s+2-|\gamma|}(\mathbbm{T}^{2})} \left\| g\right\|_{H^{|\gamma| + 1}(\mathbbm{T}^{2})}. 
\end{multline*}
For any $|\gamma| >0,$ the RHS is 
$$C  \left\| \rX\right\|_{H^{s+2-|\gamma|}(\mathbbm{T}^{2})} \left\| g\right\|_{H^{|\gamma| + 1}(\mathbbm{T}^{2})}\leq C  \left\| \rX\right\|_{H^{s+2}(\mathbbm{T}^{2})} \left\| g\right\|_{H^{s}(\mathbbm{T}^{2})},$$
and for $|\gamma|=0,$ it is 
$$C  \left\| \rX\right\|_{H^{s+2}(\mathbbm{T}^{2})} \left\| g\right\|_{H^{1}(\mathbbm{T}^{2})}.$$
This proves \eqref{transport1b}.

\subsubsection*{Proof of estimate \eqref{transport1c}}

For any $|\gamma| \geq 2,$ we may use the Sobolev embedding \eqref{Sob-inj2} as previously
\begin{multline*}
\left\| \partial_{x}^{\beta} \rX\cdot \partial_{x}^{\gamma} \nabla g \right\|_{L^{2}(\mathbbm{T}^{2})}\leq \left\| \partial_{x}^{\beta} \rX \right\|_{L^{\infty}(\mathbbm{T}^{2})}\left\| \partial_{x}^{\gamma} \nabla g \right\|_{L^{2}(\mathbbm{T}^{2})} \leq C \left\| \rX\right\|_{H^{|\beta|+2}(\mathbbm{T}^{2})} \left\| g\right\|_{H^{|\gamma|+1}(\mathbbm{T}^{2})} \\
\leq C  \left\| \rX\right\|_{H^{|\alpha|-|\gamma|+2}(\mathbbm{T}^{2})} \left\| g\right\|_{H^{|\gamma|+1}(\mathbbm{T}^{2})} \leq C  \left\| \rX\right\|_{H^{s}(\mathbbm{T}^{2})} \left\| g\right\|_{H^{s}(\mathbbm{T}^{2})}. 
\end{multline*}
When $|\gamma|=1,$ we may use the Sobolev embedding \eqref{Sob-inj1} and H\"older's inequality
\begin{multline*}
\left\| \partial_{x}^{\beta} \rX\cdot  \partial_{x}^{\gamma} \nabla  g \right\|_{L^{2}(\mathbbm{T}^{2})}\leq \left\| \partial_{x}^{\beta} \rX \right\|_{L^{4}(\mathbbm{T}^{2})}\left\| \partial_{x}^{\gamma} \nabla g \right\|_{L^{4}(\mathbbm{T}^{2})} \leq C \left\| \rX\right\|_{H^{|\beta|+1}(\mathbbm{T}^{2})} \left\| g\right\|_{H^{3}(\mathbbm{T}^{2})} \\
\leq C  \left\| \rX\right\|_{H^{|\alpha| - |\gamma|+1}(\mathbbm{T}^{2})} \left\| g\right\|_{H^{3}(\mathbbm{T}^{2})} \leq C  \left\| \rX\right\|_{H^{s}(\mathbbm{T}^{2})} \left\| g\right\|_{H^{3}(\mathbbm{T}^{2})}.
\end{multline*}
Finally, when $|\gamma|=0,$ Sobolev's embedding \eqref{Sob-inj2} gives us the estimate
\begin{multline*}
\left\| \partial_{x}^{\beta} \rX\cdot  \nabla g \right\|_{L^{2}(\mathbbm{T}^{2})}\leq \left\| \partial_{x}^{\beta} \rX \right\|_{L^{2}(\mathbbm{T}^{2})}\left\| \nabla g \right\|_{L^{\infty}(\mathbbm{T}^{2})} \leq C \left\| \rX\right\|_{H^{|\beta|}(\mathbbm{T}^{2})} \left\| g\right\|_{H^{3}(\mathbbm{T}^{2})} \\
\leq C  \left\| \rX\right\|_{H^{|\alpha|}(\mathbbm{T}^{2})} \left\| g\right\|_{H^{3}(\mathbbm{T}^{2})} \leq C  \left\| \rX\right\|_{H^{s}(\mathbbm{T}^{2})} \left\| g\right\|_{H^{3}(\mathbbm{T}^{2})}.
\end{multline*}
Collecting the three previous estimates yields \eqref{transport1c}.
\end{proof}

The previous Lemma allows us to obtain the following stability result, which will be the main technical tool of the paper.

\begin{lemma}
\label{stability}
For any $s\geq 0,$ there exists a constant $C>0$ such that, for vector fields $\rX$ and functions $h,$ if $g$ solves the equation
\begin{equation}
\notag
\partial_{t}g - \rX\cdot \nabla g =h.
\end{equation}
then for all $t\in \mathbbm{R},$ $g$ enjoys the estimates
\begin{equation}
\label{EE1}
\begin{split}
\frac{\dd}{\dd t}\left\| g(t)\right\|_{H^{s}(\mathbbm{T}^{2})}^{2}\leq & C \left[ \left\| \rX(t)\right\|_{H^{s+1}(\mathbbm{T}^{2})} \left( \left\|g(t)\right\|_{H^{s}(\mathbbm{T}^{2})} + \left\|g(t)\right\|_{H^{2}(\mathbbm{T}^{2})}  \right) \right]\left\|g(t)\right\|_{H^{s}(\mathbbm{T}^{2})}\\
& + \left\| \nabla \cdot \rX(t)\right\|_{L^{\infty}(\mathbbm{T}^{2})}\left\|g(t)\right\|_{H^{s}(\mathbbm{T}^{2})}^{2}  + 2  \left\| h(t)\right\|_{H^{s}(\mathbbm{T}^{2})}  \left\| g(t)\right\|_{H^{s}(\mathbbm{T}^{2})} ,
\end{split}
\end{equation}
\begin{equation}
\label{EE2}
\begin{split}
\frac{\dd}{\dd t}\left\| g(t)\right\|_{H^{s}(\mathbbm{T}^{2})}^{2}\leq & C \left[ \left\| \rX(t)\right\|_{H^{s+2}(\mathbbm{T}^{2})} \left( \left\|g(t)\right\|_{H^{s}(\mathbbm{T}^{2})} + \left\|g(t)\right\|_{H^{1}(\mathbbm{T}^{2})}  \right) \right]\left\|g(t)\right\|_{H^{s}(\mathbbm{T}^{2})}\\
& + \left\| \nabla \cdot \rX(t)\right\|_{L^{\infty}(\mathbbm{T}^{2})}\left\|g(t)\right\|_{H^{s}(\mathbbm{T}^{2})}^{2}  + 2  \left\| h(t)\right\|_{H^{s}(\mathbbm{T}^{2})}  \left\| g(t)\right\|_{H^{s}(\mathbbm{T}^{2})} ,
\end{split}
\end{equation}
and
\begin{equation}
\label{EE3}
\begin{split}
\frac{\dd}{\dd t}\left\| g(t)\right\|_{H^{s}(\mathbbm{T}^{2})}^{2}\leq & C \left[ \left\| \rX(t)\right\|_{H^{s}(\mathbbm{T}^{2})} \left( \left\|g(t)\right\|_{H^{s}(\mathbbm{T}^{2})} + \left\|g(t)\right\|_{H^{3}(\mathbbm{T}^{2})}  \right) \right]\left\|g(t)\right\|_{H^{s}(\mathbbm{T}^{2})}\\
& + \left\| \nabla \cdot \rX(t)\right\|_{L^{\infty}(\mathbbm{T}^{2})}\left\|g(t)\right\|_{H^{s}(\mathbbm{T}^{2})}^{2}  + 2  \left\| h(t)\right\|_{H^{s}(\mathbbm{T}^{2})}  \left\| g(t)\right\|_{H^{s}(\mathbbm{T}^{2})}.
\end{split}
\end{equation}
\end{lemma}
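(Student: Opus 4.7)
The plan is to carry out a standard $H^s$ energy estimate for the transport equation, reducing the analysis to the commutator bounds already supplied by Lemma \ref{transport}. Using the equivalence of norms $\|g\|_{H^s(\mathbbm{T}^2)}^2\sim \sum_{|\alpha|\le s}\|\partial_x^\alpha g\|_{L^2(\mathbbm{T}^2)}^2$, I would for each multi-index $\alpha$ with $|\alpha|\le s$ apply $\partial_x^\alpha$ to the equation and rewrite the result as
$$\partial_t \partial_x^\alpha g - L(\rX)\,\partial_x^\alpha g = [\partial_x^\alpha,L(\rX)]\,g + \partial_x^\alpha h,$$
then take the $L^2$ inner product with $\partial_x^\alpha g$ and sum over $|\alpha|\le s$. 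On the left one recognises $\tfrac{\dd}{\dd t}\|g\|_{H^s}^2 = 2\sum_\alpha\langle\partial_t\partial_x^\alpha g,\partial_x^\alpha g\rangle_{L^2}$; on the right three distinct contributions must be controlled.

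The transport contribution is handled by integration by parts: since $\partial_x^\alpha g$ is periodic,
$$\langle \rX\cdot\nabla\partial_x^\alpha g,\partial_x^\alpha g\rangle_{L^2}=-\tfrac{1}{2}\int_{\mathbbm{T}^2}(\nabla\cdot\rX)\,|\partial_x^\alpha g|^2\,\dd x,$$
which is bounded in absolute value by $\tfrac{1}{2}\|\nabla\cdot\rX\|_{L^\infty}\|\partial_x^\alpha g\|_{L^2}^2$; after summation in $\alpha$ and the factor $2$ coming from differentiating the square, this produces exactly the common contribution $\|\nabla\cdot\rX\|_{L^\infty}\|g\|_{H^s}^2$ appearing in each of \eqref{EE1}, \eqref{EE2}, \eqref{EE3}. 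The source term is immediate: Cauchy--Schwarz gives $|\langle\partial_x^\alpha h,\partial_x^\alpha g\rangle_{L^2}|\le \|\partial_x^\alpha h\|_{L^2}\|\partial_x^\alpha g\|_{L^2}$, and summation combined with a second Cauchy--Schwarz on the finite sum over $\alpha$ yields the term $2\|h\|_{H^s}\|g\|_{H^s}$.

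The commutator piece is the only one that distinguishes between the three estimates. Cauchy--Schwarz yields
$$\bigl|\langle[\partial_x^\alpha,L(\rX)]g,\partial_x^\alpha g\rangle_{L^2}\bigr|\le \|[\partial_x^\alpha,L(\rX)]g\|_{L^2}\,\|\partial_x^\alpha g\|_{L^2},$$
and inserting successively the three commutator bounds \eqref{transport1}, \eqref{transport1b}, \eqref{transport1c} of Lemma \ref{transport} reproduces the first right-hand side factor of \eqref{EE1}, \eqref{EE2}, \eqref{EE3} respectively, after absorbing the finite combinatorial number of multi-indices $|\alpha|\le s$ into the universal constant $C$. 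Since the analytic heart of the argument (splitting mixed derivatives and invoking the Sobolev embeddings \eqref{Sob-inj1}--\eqref{Sob-inj2}) has already been done in Lemma \ref{transport}, I do not expect any real obstacle; the only thing to keep track of is the book-keeping between the factor $\tfrac{1}{2}$ produced by differentiating $\|g\|_{H^s}^2$ in time and the factor $\tfrac{1}{2}$ produced by the integration by parts, which cancel cleanly and leave the constants displayed in the statement.
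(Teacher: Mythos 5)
Your proposal is correct and follows essentially the same route as the paper: the same $H^s$ energy estimate with the splitting $\partial_x^\alpha L(\rX)g=[\partial_x^\alpha,L(\rX)]g+L(\rX)\partial_x^\alpha g$, integration by parts producing the $\|\nabla\cdot\rX\|_{L^\infty}$ term, Cauchy--Schwarz for the source term, and the three commutator bounds of Lemma \ref{transport} yielding the three estimates. No gaps.
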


\begin{proof}
We prove the Lemma by an energy estimate. With the notations introduced in \eqref{transp-OP} and \eqref{com}, we have for all $|\alpha|\leq s,$
\begin{equation}
\notag
\begin{split}
\frac{1}{2}\frac{\dd}{\dd t} \left\| \partial_{x}^{\alpha} g(t)\right\|_{L^{2}(\mathbbm{T}^{2})}^{2} &=  \langle \partial_{x}^{\alpha} L(\rX)g ,\partial_{x}^{\alpha} g\rangle_{L^{2}(\mathbbm{T}^{2})} + \langle \partial_{x}^{\alpha} h ,\partial_{x}^{\alpha} g\rangle_{L^{2}(\mathbbm{T}^{2})} \\
& = \langle \left[\partial_{x}^{\alpha}, L(\rX)\right]g ,\partial_{x}^{\alpha} g\rangle_{L^{2}(\mathbbm{T}^{2})} + \langle  L(\rX) \partial_{x}^{\alpha}g ,\partial_{x}^{\alpha} g\rangle_{L^{2}(\mathbbm{T}^{2})} + \langle \partial_{x}^{\alpha} h ,\partial_{x}^{\alpha} g\rangle_{L^{2}(\mathbbm{T}^{2})}.
\end{split}
\end{equation}
The last term is obviously controlled as follows
$$\left| \langle \partial_{x}^{\alpha} h ,\partial_{x}^{\alpha} g\rangle_{L^{2}(\mathbbm{T}^{2})} \right| \leq \left\| h(t)\right\|_{H^{s}(\mathbbm{T}^{2})}  \left\| g(t)\right\|_{H^{s}(\mathbbm{T}^{2})}.$$
Also,
$$\langle  L(\rX) \partial_{x}^{\alpha}g ,\partial_{x}^{\alpha} g\rangle_{L^{2}(\mathbbm{T}^{2})}= \int_{\mathbbm{T}^{2}} \rX(t,x)\cdot \nabla \partial_{x}^{\alpha}g(t,x) \overline{\partial_{x}^{\alpha}g(t,x)} \dd x = -\frac{1}{2} \int_{\mathbbm{T}^{2}} \nabla \cdot \rX(t,x) |\partial_{x}^{\alpha}g(t,x)|^{2} \dd x,$$
such that
$$\left| \langle  L(\rX) \partial_{x}^{\alpha}g ,\partial_{x}^{\alpha} g\rangle_{L^{2}(\mathbbm{T}^{2})} \right| \leq \frac{1}{2} \left\| \nabla \cdot \rX(t)\right\|_{L^{\infty}(\mathbbm{T}^{2})}\left\|g(t)\right\|_{H^{s}(\mathbbm{T}^{2})}^{2}.$$
Finally,
$$\left| \langle \left[\partial_{x}^{\alpha}, L(\rX)\right]g ,\partial_{x}^{\alpha} g\rangle_{L^{2}(\mathbbm{T}^{2})} \right| \leq \left\| \left[\partial_{x}^{\alpha}, L(\rX)\right]g \right\|_{L^{2}(\mathbbm{T}^{2})} \left\|g(t)\right\|_{H^{s}(\mathbbm{T}^{2})},$$
such that estimates \eqref{transport1}, \eqref{transport1b} and \eqref{transport1c} from Lemma \ref{transport} yields respectively the estimates \eqref{EE1}, \eqref{EE2} and \eqref{EE3}.
\end{proof}

\subsection{Stability estimates}

We will prove in the Proposition below the stability in $H^{s}$ of the flows $\varphi_{E,t}$ and $\varphi_{F,t},$ defined respectively by \eqref{exact} {\normalfont{\&}} \eqref{flow-ex}, and \eqref{glace} {\normalfont{\&}} \eqref{flow-froz}.\\
Throughout this subsection we shall use the following property: for any $s\geq 0$ and function $g\in H^{s}(\mathbbm{T}^{2}),$ the vector field $J\nabla \Delta^{-1}g$ enjoys the estimate
\begin{equation}
\label{poiss}
\left\| J\nabla \Delta^{-1} g\right\|_{H^{s+1}(\mathbbm{T}^{2})} \leq C \left\| g \right\|_{H^{s}(\mathbbm{T}^{2})}.
\end{equation}
This is an easy consequence of Proposition \ref{poisson-reg}.

\begin{proposition}
\label{stab}
Let $s\geq 2,$ $\omega_{0}\in H^{s}(\mathbbm{T}^{2})$ with average $0,$ and $B>0$ such that 
$\left\|\omega_{0}\right\|_{H^{s}(\mathbbm{T}^{2})}\leq B.$ There exists two positive constants $L_{0}$ and $L_{1},$ both independent of $\omega_{0},$ such that, if 
$$T_{0}<\frac{1}{L_{0}B},$$
then for all $t\in[0,T_{0}],$
\begin{equation}
\label{stab-ex}
\left\|\varphi_{E,t}(\omega_{0})\right\|_{H^{s}(\mathbbm{T}^{2})}\leq \min\left(2,e^{BL_{1}t}\right) \left\| \omega_{0} \right\|_{H^{s}(\mathbbm{T}^{2})},
\end{equation}
and
\begin{equation}
\label{stab-froz}
\left\|\varphi_{F,t}(\omega_{0})\right\|_{H^{s}(\mathbbm{T}^{2})}\leq \min\left(2,e^{BL_{1}t}\right) \left\| \omega_{0} \right\|_{H^{s}(\mathbbm{T}^{2})}.
\end{equation}
\end{proposition}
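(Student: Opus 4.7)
The plan is to apply Lemma \ref{stability} to both flows and then integrate the resulting differential inequalities. For the frozen flow we take $\rX = J\nabla\psi$ (constant in time, with $\Delta\psi = \omega_0$) and $g(t) = \varphi_{F,t}(\omega_0)$; for the exact Euler flow we take $\rX(t) = J\nabla\Delta^{-1}\omega(t)$ with $g(t) = \omega(t) = \varphi_{E,t}(\omega_0)$. In both cases the source term vanishes ($h=0$), and the crucial observation is that $\rX$ is divergence-free since $\nabla\cdot J\nabla\phi = \partial_{x_1}\partial_{x_2}\phi - \partial_{x_2}\partial_{x_1}\phi = 0$ for any scalar $\phi$; hence the $\|\nabla\cdot\rX\|_{L^{\infty}}$ contribution in \eqref{EE1} disappears. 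Using $\|g\|_{H^2}\leq \|g\|_{H^s}$ (valid since $s\geq 2$) together with the elliptic estimate \eqref{poiss}, estimate \eqref{EE1} reduces to
$$\frac{\dd}{\dd t}\left\|\varphi_{F,t}(\omega_0)\right\|_{H^s(\mathbbm{T}^{2})}^2 \leq C\|\omega_0\|_{H^s(\mathbbm{T}^{2})}\left\|\varphi_{F,t}(\omega_0)\right\|_{H^s(\mathbbm{T}^{2})}^2$$
in the frozen case, and to the Riccati-type inequality
$$\frac{\dd}{\dd t}\left\|\omega(t)\right\|_{H^s(\mathbbm{T}^{2})}^2 \leq C\|\omega(t)\|_{H^s(\mathbbm{T}^{2})}^3$$
in the Euler case.

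For the frozen flow the right-hand side is linear in $\|g\|_{H^s}^2$ with coefficient bounded by $CB$, and a direct Gronwall argument immediately yields $\|\varphi_{F,t}(\omega_0)\|_{H^s(\mathbbm{T}^{2})}\leq e^{CBt/2}\|\omega_0\|_{H^s(\mathbbm{T}^{2})}$ for every $t\geq 0$. The Euler case is more delicate because $\rX$ depends on the unknown $\omega$ itself; writing $y(t)=\|\omega(t)\|_{H^s(\mathbbm{T}^{2})}$ we obtain the scalar Riccati inequality $y'\leq (C/2)y^2$, which integrates into $y(t)\leq y(0)/(1-Cty(0)/2)$ as long as the denominator stays positive. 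Choosing $L_0 = C$ and $T_0 < 1/(L_0 B)$ forces $CBt/2\leq 1/2$ on $[0,T_0]$, so $y(t)\leq 2B$ throughout that interval; reinserting this upper bound into the Riccati inequality reduces it to the linear differential inequality $y'\leq CBy$, and a second Gronwall step yields $\|\omega(t)\|_{H^s(\mathbbm{T}^{2})}\leq e^{CBt}\|\omega_0\|_{H^s(\mathbbm{T}^{2})}$ on $[0,T_0]$.

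Taking $L_1=C$ then covers both cases, and the two branches of the $\min$ in \eqref{stab-ex} and \eqref{stab-froz} are now transparent: the exponential branch follows from the two Gronwall arguments above, while the constant $2$ branch comes from the explicit Riccati bound in the Euler case and from $e^{CBt/2}\leq e^{1/2}<2$ on $[0,T_0]$ in the frozen case. The principal (and essentially only) subtlety is the bootstrap/Riccati step for the Euler flow, needed because the driving vector field is a functional of the solution itself; once this step is in place, both estimates are direct consequences of Lemma \ref{stability} together with the elliptic estimate \eqref{poiss}.
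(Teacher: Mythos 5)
Your proposal is correct and follows essentially the same route as the paper: estimate \eqref{EE1} of Lemma \ref{stability} applied with the divergence-free fields $J\nabla\Delta^{-1}\omega(t)$ and $J\nabla\psi$, the elliptic bound \eqref{poiss}, and a Gronwall step. The only (minor) difference is in the Euler case, where you close the nonlinearity by integrating the scalar Riccati inequality $y'\leq (C/2)y^{2}$ explicitly to get $\left\|\omega(t)\right\|_{H^{s}(\mathbbm{T}^{2})}\leq 2B$ on $[0,T_{0}]$, whereas the paper obtains the same a priori bound by a bootstrap/continuity argument; both are valid and lead to the same conclusion.
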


\begin{proof}
We begin with the stability estimate \eqref{stab-ex}. $\varphi_{E,t}(\omega_{0})$ satisfies the equation
$$\partial_{t} \varphi_{E,t}(\omega_{0}) -J\nabla \Delta^{-1}\varphi_{E,t}(\omega_{0})\cdot \nabla\varphi_{E,t}(\omega_{0})=0.$$
Applying Lemma \ref{stability} with the divergence-free vector field $\rX(t)=J\nabla \Delta^{-1}\varphi_{E,t}(\omega_{0}),$ estimate \eqref{EE1} (with $s\geq 2$) implies that for all $t\geq 0,$
$$\left\| \varphi_{E,t}(\omega_{0}) \right\|_{H^{s}(\mathbbm{T}^{2})}^{2} \leq \left\| \omega_{0}\right\|_{H^{s}(\mathbbm{T}^{2})}^{2} + C\int_{0}^{t} \left\| \varphi_{E,\sigma}(\omega_{0})\right\|_{H^{s}(\mathbbm{T}^{2})} \left\| \varphi_{E,\sigma}(\omega_{0})\right\|_{H^{s}(\mathbbm{T}^{2})}^{2} \dd \sigma.$$
Here we have also used the inequality \eqref{poiss} with the function $g=\varphi_{E,t}(\omega_{0}).$\\
If $T_{0}>0$ is such that the following estimate holds,
$$\sup_{t\in[0,T_{0}]} \left\| \varphi_{E,t}(\omega_{0}) \right\|_{H^{s}(\mathbbm{T}^{2})} \leq 2\left\|\omega_{0}\right\|_{H^{s}(\mathbbm{T}^{2})},$$
the previous inequality implies then that
$$(1-2BCT_{0})\sup_{t\in [0,T_{0}]} \left\| \varphi_{E,t}(\omega_{0}) \right\|_{H^{s}(\mathbbm{T}^{2})}^{2}\leq \left\| \omega_{0} \right\|_{H^{s}(\mathbbm{T}^{2})}^{2}.$$
Hence if $T_{0}$ is chosen such that $2BCT_{0}<3/4,$ we obtain the estimate
$$\sup_{t\in[0,T_{0}]}\left\| \varphi_{E,t}(\omega_{0}) \right\|_{H^{s}(\mathbbm{T}^{2})}< 2\left\| \omega_{0} \right\|_{H^{s}(\mathbbm{T}^{2})}.$$
By a bootstrap argument this implies that a time $T_{0}>0$ can be chosen such that, if $2BCT_{0}<3/4,$ $\varphi_{E,t}(\omega_{0})$ enjoys the estimate
$$\sup_{t\in[0,T_{0}]} \left\| \varphi_{E,t}(\omega_{0}) \right\|_{H^{s}(\mathbbm{T}^{2})} \leq 2\left\| \omega_{0} \right\|_{H^{s}(\mathbbm{T}^{2})}.$$
Using Gronwall's Lemma we infer easily that
$$\left\| \varphi_{E,t}(\omega_{0}) \right\|_{H^{s}(\mathbbm{T}^{2})}\leq e^{BL_{1}t} \left\| \omega_{0} \right\|_{H^{s}(\mathbbm{T}^{2})},$$
which proves estimate \eqref{stab-ex}, with $L_{0}= 8C/3,$ and $L_{1}=2C.$\\
To prove estimate \eqref{stab-froz}, we use the fact that $\varphi_{F,t}(\omega_{0})$ satisfies the equation
$$\partial_{t}\varphi_{F,t}(\omega_{0}) - J \nabla  \Delta^{-1}\omega_{0}\cdot \nabla\varphi_{F,t}(\omega_{0})=0,$$
with initial data $\omega_{0}.$ Applying once more Lemma \ref{stability} with the divergence-free vector field $\rX(t)=J\nabla \Delta^{-1}\omega_{0},$
estimate \eqref{EE1} yields as previously
$$\left\| \varphi_{F,t}(\omega_{0})\right\|_{H^{s}(\mathbbm{T}^{2})}^{2} \leq \left\| \omega_{0}\right\|_{H^{s}(\mathbbm{T}^{2})}^{2} + C\int_{0}^{t} \left\| \omega_{0}\right\|_{H^{s}(\mathbbm{T}^{2})} \left\| \varphi_{F,\sigma}(\omega_{0})\right\|_{H^{s}(\mathbbm{T}^{2})}^{2} \dd \sigma,$$
where we have also applied the inequality \eqref{poiss} with the function $g=\omega_{0}.$\\
Hence if $2BCT_{0}<3/4,$ it implies that for all $t\in[0,T_{0}],$
$$\left\| \varphi_{F,t}(\omega_{0})\right\|_{H^{s}(\mathbbm{T}^{2})} \leq 2 \left\| \omega_{0}\right\|_{H^{s}(\mathbbm{T}^{2})}.$$
On the other hand, Gronwall's Lemma implies that
$$\left\| \varphi_{F,t}(\omega_{0})\right\|_{H^{s}(\mathbbm{T}^{2})}\leq e^{BL_{1}t}\left\| \omega_{0}\right\|_{H^{s}(\mathbbm{T}^{2})}.$$
Therefore,
$$\left\|\varphi_{F,t}(\omega_{0})\right\|_{H^{s}(\mathbbm{T}^{2})}\leq \min\left(2,e^{BL_{1}t}\right) \left\| \omega_{0} \right\|_{H^{s}(\mathbbm{T}^{2})},$$
and estimate \eqref{stab-froz} is proven.
\end{proof}

\section{Numerical stability}

\subsection{Properties of the midpoint flow}

\begin{proposition}
\label{Mid-topo}
Let  $s\geq 3,$ $\omega_{0} \in H^{s}(\mathbbm{T}^{2})$ with average $0.$ Let $\psi$ be the solution of the Poisson equation $\Delta \psi =\omega_{0},$ and let $\tau \in ]0,1[.$ There exists a positive constant $R_{0},$ independent of $\omega_{0},$ such that, if $\tau \left\| \omega_{0}\right\|_{H^{2}(\mathbbm{T}^{2})}R_{0}<1,$ the following properties hold:

\medskip

{\bf i)} For all $t\in [0,\tau]$ and $x\in\mathbbm{T}^{2},$ there exists an unique solution $\Phi_{t}(x)$ to the implicit equation
\begin{equation}
\label{mid}
\Phi_{t}(x) =x + tJ\nabla\psi\left(\frac{x+\Phi_{t}(x)}{2}\right).
\end{equation}

\medskip

{\bf ii)} The function $\Phi_{t}(x):[0,\tau]\times \mathbbm{T}^{2} \to \mathbbm{T}^{2}$ is $C^{s-1},$ and for all $t\in [0,\tau],$ $x\mapsto \Phi_{t}(x)$ is a symplectic global diffeomorphism on $\mathbbm{T}^{2}.$
Moreover, $\Phi_{t}^{-1}=\Phi_{-t}.$

\medskip

{\bf iii)} For all $t\in[0,\tau],$ the mappings
$$\quad \Ec_{t}(x)=x+\frac{t}{2}J\nabla \psi(x) \quad \mbox{and} \quad \Ecs_{t}(x) = \Ec_{-t}^{-1}(x)$$
are as well global diffeomorphisms on $\mathbbm{T}^{2},$ and
$$\Phi_{t}=\Ec_{t}\circ \Ecs_{t}.$$

\medskip

{\bf iv)} Let $V(t,x)$ be the vector field defined by
$$V(t,x)=\partial_{t} \Phi_{-t} \circ \Phi_{t}(x).$$
If $s\geq 4,$ there exists a $C^{s-4}$ vector field $(t,x)\mapsto \mathcal{R}(t,x)$ such that
$$V(t,x)=-J\nabla \psi(x) + t^{2} \mathcal{R}(t,x).$$
\end{proposition}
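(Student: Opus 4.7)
The plan is to treat the four parts in order, each building on the previous.

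\textbf{Part (i).} I would run a Banach fixed-point argument: for $t\in[0,\tau]$ and $x\in\mathbbm{T}^{2}$ fixed, set
$$T_{t,x}(y) = x + tJ\nabla\psi\!\left(\tfrac{x+y}{2}\right).$$
Its differential with respect to $y$ is $(t/2)J\nabla^{2}\psi((x+y)/2)$, of operator norm $\leq (t/2)\|\nabla^{2}\psi\|_{L^{\infty}(\mathbbm{T}^{2})}$. Combining $\Delta\psi=\omega_{0}$, the Poisson regularity estimate $\|\psi\|_{H^{s+2}}\leq C\|\omega_{0}\|_{H^{s}}$ from the appendix, and the Sobolev embedding $H^{4}(\mathbbm{T}^{2})\hookrightarrow W^{2,\infty}(\mathbbm{T}^{2})$, yields $\|\nabla^{2}\psi\|_{L^{\infty}}\leq C\|\omega_{0}\|_{H^{2}}$. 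Choosing $R_{0}$ so that $\tau\|\omega_{0}\|_{H^{2}}R_{0}<1$ makes $T_{t,x}$ a strict contraction, giving the unique $\Phi_{t}(x)$.

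\textbf{Parts (ii)--(iii).} Smoothness in (ii) comes from the implicit function theorem applied to $F(t,x,y)=y-x-tJ\nabla\psi((x+y)/2)$: $D_{y}F$ is invertible by the same contraction estimate, and $\psi\in H^{s+2}\hookrightarrow C^{s}$ ensures $F\in C^{s-1}$, whence $\Phi_{t}(x)\in C^{s-1}$. The relation $\Phi_{t}^{-1}=\Phi_{-t}$ is immediate from the symmetry of \eqref{mid} in $(x,y)$: if $y=\Phi_{t}(x)$, then $x=y+(-t)J\nabla\psi((y+x)/2)$, i.e.\ $\Phi_{-t}(y)=x$. Symplecticity is the textbook fact that the implicit midpoint rule applied to a Hamiltonian vector field preserves the symplectic form (\cite{HLW}), and global bijectivity on $\mathbbm{T}^{2}$ follows because $\Phi_{t}$ is a local diffeomorphism homotopic to the identity on a compact connected $2$-manifold, hence of degree one. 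Part (iii) proceeds identically for $\Ec_{t}$: its differential $I+(t/2)J\nabla^{2}\psi$ is invertible under the same smallness assumption, and Lipschitz-injectivity plus the degree argument give a global diffeomorphism. For the decomposition $\Phi_{t}=\Ec_{t}\circ\Ecs_{t}$, I would set $x^{*}=\Ecs_{t}(x)$, use $\Ec_{-t}(x^{*})=x$ to get $x^{*}=x+(t/2)J\nabla\psi(x^{*})$, and then check that the midpoint of $x$ and $\Ec_{t}(x^{*})$ equals $x^{*}$; uniqueness from (i) identifies $\Ec_{t}(x^{*})$ with $\Phi_{t}(x)$.

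\textbf{Part (iv).} This is where the order of the midpoint rule enters. Differentiating $\Phi_{-t}(y)=y-tJ\nabla\psi((y+\Phi_{-t}(y))/2)$ in $t$ at fixed $y$ and evaluating at $y=\Phi_{t}(x)$ (so that $\Phi_{-t}(y)=x$ and $(y+\Phi_{-t}(y))/2$ collapses to $\xi:=(x+\Phi_{t}(x))/2$) yields the closed form
$$V(t,x) = -\bigl[I+\tfrac{t}{2}J\nabla^{2}\psi(\xi)\bigr]^{-1}J\nabla\psi(\xi).$$
From \eqref{mid} one has $\xi = x + (t/2)J\nabla\psi(x) + O(t^{2})$, so Taylor-expanding $\nabla\psi(\xi)$ and $\nabla^{2}\psi(\xi)$ around $x$ and expanding the matrix inverse via a Neumann series gives
$$V(t,x) = -J\nabla\psi(x) - \tfrac{t}{2}J\nabla^{2}\psi(x)J\nabla\psi(x) + \tfrac{t}{2}J\nabla^{2}\psi(x)J\nabla\psi(x) + O(t^{2}),$$
where the two linear-in-$t$ contributions cancel, reflecting the order-two symmetry of the midpoint rule. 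Rewriting the remaining $O(t^{2})$ with a standard Taylor integral remainder produces the required $\Rc(t,x)$. Each Taylor expansion consumes two derivatives of $\psi$, the dependence on $\Phi_{t}$ consumes one more (since $\Phi_{t}\in C^{s-1}$), and one derivative is absorbed into the smooth integral remainder, accounting for the loss $\Rc\in C^{s-4}$.

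\textbf{Main obstacle.} The delicate point is the cancellation in part (iv): every quantity entering $V(t,x)$ depends implicitly on $t$ through $\xi$, so the Taylor expansions of $\nabla\psi(\xi)$, $\nabla^{2}\psi(\xi)$, and the Neumann inverse must be carried out consistently, and one must extract precisely the two cancelling first-order contributions rather than a bare $O(t)$ term with indeterminate sign. The regularity bookkeeping leading to the sharp exponent $s-4$ is more tedious than conceptually hard, but it has to be tracked carefully to remain compatible with the assumption $s\geq 6$ used later in Theorem \ref{convergence}.
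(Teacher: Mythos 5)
Your proposal is correct and follows essentially the same route as the paper: a Banach fixed point for {\bf i)}, the implicit function theorem plus a local-diffeomorphism/compactness argument for {\bf ii)}, identification of $\Ecs_{t}(x)$ with $\frac{x+\Phi_{t}(x)}{2}$ for {\bf iii)}, and a second-order Taylor expansion of $V$ in $t$ at $t=0$ with cancellation of the first-order term for {\bf iv)}. The only noteworthy variations are cosmetic: you solve for $V(t,x)=-\bigl[I+\tfrac{t}{2}J\nabla^{2}\psi(\xi)\bigr]^{-1}J\nabla\psi(\xi)$ in closed form before expanding (the paper instead differentiates the implicit relation $G(-t,\Phi_{-t}(x),x)=0$ twice and evaluates at $t=0$), you invoke the symplecticity of the midpoint rule from \cite{HLW} rather than verifying $\rD_{x}\Phi_{t}^{\top}J\rD_{x}\Phi_{t}=J$ by direct matrix computation, and you use a degree argument where the paper uses open-closed connectedness plus contraction-based injectivity; all lead to the same conclusions with the same regularity count $\mathcal{R}\in C^{s-4}$.
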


\begin{proof}
\hspace{0mm}
\subsubsection*{Proof of assertion {\bf i)}}

Let us first note that, using Proposition \ref{poisson-reg} and the Sobolev embedding \eqref{Sob-inj2}, we have for any $\sigma \geq 0,$
\begin{equation}
\label{reg}
\sup_{0\leq |\alpha|\leq \sigma} \left\| \partial_{x}^{\alpha}\psi \right\|_{L^{\infty}(\mathbbm{T}^{2})}\leq C\left\| \psi\right\|_{H^{\sigma+2}(\mathbbm{T}^{2})} \leq C\left\| \omega_{0}\right\|_{H^{\sigma}(\mathbbm{T}^{2})}.
\end{equation}
The existence (and uniqueness) of $\Phi_{t}(x)$ follows then: for $t\in [0,\tau],$ and $x\in \mathbbm{T}^{2},$ we define a function $F_{t,x}:\mathbbm{T}^{2}\to \mathbbm{T}^{2}$ by
$$F_{t,x}(y)=x+t J\nabla\psi \left(\frac{x+y}{2}\right).$$
For any $y,\tilde{y}\in\mathbbm{T}^{2},$ we have by the mean-value Theorem
$$\left|F_{t,x}(y)-F_{t,x}(\tilde{y})\right|=t\left| J\nabla\psi \left(\frac{x+y}{2}\right) - J\nabla\psi \left(\frac{x+\tilde{y}}{2}\right)\right|\leq \tau C \sup_{|\alpha|=2}\left\| \partial_{x}^{\alpha} \psi\right\|_{L^{\infty}(\mathbbm{T}^{2})} |y-\tilde{y}| < |y-\tilde{y}|,$$
provided that, using \eqref{reg}, $\tau R_{0} \left\| \omega_{0} \right\|_{H^{2}(\mathbbm{T}^{2})}<1,$ for some appropriate constant $R_{0}=R_{0}(C).$ In that case $F_{t,x}$ is a contraction mapping on $\mathbbm{T}^{2},$ such that Banach's fixed point Theorem gives us an unique solution $\Phi_{t}(x)$ to the equation
$$F_{t,x}\left(\Phi_{t}(x)\right)=\Phi_{t}(x).$$
This proves the assertion {\bf i)}.

\subsubsection*{Proof of assertion {\bf ii)}}

Let us now consider, for $\eps\in ]0,1[,$ the function $G:]-\eps,\tau+\eps[\times \mathbbm{T}^{2} \times \mathbbm{T}^{2}\to \mathbbm{T}^{2}$ defined by
\begin{equation}
\label{G}
G(t,y,x)=y-x-tJ\nabla\psi\left(\frac{x+y}{2}\right).
\end{equation}
Then by \eqref{reg}, $\nabla \psi$ belongs to $H^{s+1},$ which is continuously embedded in $C^{s-1},$ such that $G$ has class $C^{s-1}$ and, in addition, for all $(t,x)\in[0,\tau] \times \mathbbm{T}^{2},$
$$G(t,\Phi_{t}(x),x)=0.$$
Moreover,
$$\rD_{y}G(t,\Phi_{t}(x),x)=A_{t}\left(JY^{t}(x)\right),$$
with
\begin{equation}
\label{def-Y}
Y^{t}(x)=\nabla^{2}\psi\left(\frac{\Phi_{t}(x)+x}{2}\right),
\end{equation}
and where, if $Y$ is a $2\times 2$ square matrix,
\begin{equation}
\label{def-A}
A_{t}(Y)=I_{2}-\frac{t}{2}Y,
\end{equation}
$I_{2}$ being the identity matrix of $M_{2}(\mathbbm{R}).$\\
Thanks to \eqref{reg}, 
$$\sup_{|\alpha|=2} \left\| \partial_{x}^{\alpha}\psi\right\|_{L^{\infty}(\mathbbm{T}^{2})} \leq  C\left\| \omega_{0} \right\|_{H^{2}(\mathbbm{T}^{2})},$$
and thus it is well-known that, if $(\tau+\eps) C\left\| \omega_{0} \right\|_{H^{2}(\mathbbm{T}^{2})} <2,$ $A_{t}\left(JY^{t}(x)\right)$ is invertible for all $(t,x)\in[0,\tau]\times \mathbbm{T}^{2}$ and
\begin{equation}
\label{inverse-A}
A_{t}\left(JY^{t}(x)\right)^{-1}=\sum_{n =0}^{+\infty} \frac{t^{n}}{2^{n}} \left(JY^{t}(x)\right)^{n}.
\end{equation}
We may assume that this is true, given the assumption on $\tau,$ and choosing $\eps$ small enough.\\
As $s-1>1,$ we can apply the implicit function Theorem, which shows then that the function $(t,x)\mapsto \Phi_{t}(x)$ has $C^{s-1}$ regularity on $]-\eps,\tau+\eps[\times \mathbbm{T}^{2}.$\\
In addition we are allowed to differentiate the equation
$$G(t,\Phi_{t}(x),x)=0$$
with respect to $x,$ and it implies that
\begin{equation}
\label{mid-dz}
A_{t}(JY^{t}(x))\rD_{x}\Phi_{t}(x)=A_{-t}(JY^{t}(x)),
\end{equation}
Since $A_{t}(JY^{t}(x))$ and $A_{-t}(JY^{t}(x))$ are invertible, then so is $\rD_{x}\Phi_{t}(x).$ By the local inverse Theorem and the open mapping Theorem, $\Phi_{t}(\cdot)$ is therefore a local diffeomorphism on $\mathbbm{T}^{2},$ and an open mapping. In particular $\Phi_{t}\left(\mathbbm{T}^{2}\right)$ is open, and, by continuity, also compact, thus closed. By connectedness, we conclude that $\Phi_{t}(\cdot)$ is onto. It is also one-to-one, since if $\Phi_{t}(x)=\Phi_{t}(y),$ then by the mean-value Theorem
$$|x-y|=\left|t J\nabla\psi\left(\frac{x + \Phi_{t}(x)}{2}\right) - t J\nabla\psi\left(\frac{y + \Phi_{t}(y) }{2}\right) \right|\leq \frac{tC\left\| \omega_{0} \right\|_{H^{2}(\mathbbm{T}^{2})}}{2} |x-y|,$$
which implies that $x=y$ if $t\left\| \omega_{0} \right\|_{H^{2}(\mathbbm{T}^{2})}C<2.$ Thus $\Phi_{t}(\cdot)$ is a global diffeomorphism on $\mathbbm{T}^{2}.$\\
It remains to show that it is symplectic, {\it ie} that
$$\rD_{x}\Phi_{t}(x)^{\top} J \rD_{x}\Phi_{t}(x)=J.$$
Using \eqref{mid-dz}, the identity $J^{\top}=J^{-1}=-J$ and the symmetry of the matrix $Y^{t}(x),$ we have
\begin{equation}
\notag
\begin{split}
&\rD_{x}\Phi_{t}(x)^{\top} J \rD_{x}\Phi_{t}(x)=J\\
\Leftrightarrow \quad & A_{-t}(JY^{t}(x))^{\top} \left(A_{t}(JY^{t}(x))^{\top}\right)^{-1} J \left(A_{t}(JY^{t}(x))\right)^{-1} A_{-t}(JY^{t}(x)) = J \\
\Leftrightarrow \quad & \left(A_{t}(JY^{t}(x))^{\top}\right)^{-1} J \left(A_{t}(JY^{t}(x))\right)^{-1} = \left(A_{-t}(JY^{t}(x))^{\top}\right)^{-1} J \left(A_{-t}(JY^{t}(x))\right)^{-1} \\
\Leftrightarrow \quad & A_{t}(JY^{t}(x)) J A_{t}(JY^{t}(x))^{\top} = A_{-t}(JY^{t}(x)) J A_{-t}(JY^{t}(x))^{\top} \\
\Leftrightarrow \quad &  A_{t}(JY^{t}(x)) J A_{-t}(Y^{t}(x)J) = A_{-t}(JY^{t}(x)) J A_{t}(Y^{t}(x)J) \\ 
\Leftrightarrow \quad & J=J,
\end{split}
\end{equation}
the last line being easily obtained by expanding each sides of the penultimate equality.\\
Finally, as we have for all $x\in\mathbbm{T}^{2},$
$$\Phi_{t}(x)=x+tJ\nabla\psi\left(\frac{x+\Phi_{t}(x)}{2}\right),$$
one infers that for all $x\in\mathbbm{T}^{2},$
$$x=\Phi_{t}^{-1}(x)+tJ\nabla\psi\left(\frac{x+\Phi_{t}^{-1}(x)}{2}\right),$$
which shows that $\Phi_{t}^{-1}=\Phi_{-t}.$

\subsubsection*{Proof of assertion {\bf iii)}}

The mappings $\Ec_{t}$ and $\Ecs_{t}$ are defined by
$$\Ec_{t}(x)=x+\frac{t}{2}J\nabla \psi(x) \quad \mbox{and} \quad \Ecs_{t}(x)=x+\frac{t}{2}J\nabla\psi(\Ecs_{t}(x)).$$
Thus, using the above notation \eqref{def-A},
$$\rD_{x}\Ec_{t}(x)=A_{-t}(J\nabla^{2}\psi(x))\quad \mbox{and} \quad A_{t}(J\nabla ^{2} \psi (\Ecs_{t}(x)))\rD_{x}\Ecs_{t}(x)= I_{2},$$
such that we may repeat the previous arguments (local inverse Theorem, open mapping Theorem) to conclude that $\Ec_{t}$ and $\Ecs_{t}$ are global diffeomorphisms on $\mathbbm{T}^{2}.$\\
Moreover, for all $t\in [0,t],$ $y=\Ecs_{t}(x)$ is by definition the unique solution of the equation
$$y=x+\frac{t}{2}J\nabla\psi(y),$$
which is also solved by $y= \frac{x+\Phi_{t}(x)}{2}.$ Hence $\Ecs_{t}(x)= \frac{x+\Phi_{t}(x)}{2},$
and
$$\Phi_{t}=\Ec_{t}\circ \Ecs_{t}.$$

\subsubsection*{Proof of assertion {\bf iv)}}

In that part of the proof we shall need the following derivatives of the function $G$ defined by \eqref{G}:
\begin{equation}
\label{DG}
\left\{
\begin{split}
&\partial_{t}G(t,y,x) = -J\nabla\psi \left(\frac{x+y}{2}\right) \\
&\rD_{y} G(t,y,x)= I_{2} - \frac{t}{2} J\nabla^{2}\psi \left(\frac{x+y}{2}\right) \\
&\rD_{x} G(t,y,x)= -I_{2} - \frac{t}{2} J\nabla^{2}\psi \left(\frac{x+y}{2}\right) \\
&\rD_{x}\partial_{t} G(t,y,x)= \rD_{y}\partial_{t} G(t,y,x)=-\frac{1}{2} J\nabla^{2}\psi \left(\frac{x+y}{2}\right) \\
&\rD_{x}\rD_{y}G(t,y,x) = -\frac{t}{4} J\nabla^{3}\psi \left(\frac{x+y}{2}\right) \\
& \partial_{t}^{2}G(t,y,x) = (0,0)^{\top}.
\end{split}
\right. 
\end{equation}
We will write the second order Taylor-expansion in time of $ \partial_{t}(\Phi_{-t}(x)) \circ \Phi_{t}(x),$ and for that we will need the expressions of
$$\Phi_{t}(x),\quad \partial_{t}\Phi_{t}(x), \quad\partial_{t}(\Phi_{-t}(x)) \circ \Phi_{t}(x),\quad \frac{\dd}{\dd t} [\partial_{t}(\Phi_{-t}(x)) \circ \Phi_{t}(x)]$$
at time $t=0.$\\
First of all, using \eqref{G} and \eqref{DG} and evaluating the identities
$$G(t,\Phi_{t}(x),x)=0 \quad \mbox{and} \quad \partial_{t}G(t,\Phi_{t}(x),x)+ \rD_{y}G(t,\Phi_{t}(x),x)\partial_{t}\Phi_{t}(x)=0$$
at $t=0$ gives us
\begin{equation}
\label{step1}
\Phi^{0}(x)=x\quad \mbox{and}\quad \partial_{t}\Phi_{t}(x)_{|t=0}=J\nabla\psi(x).
\end{equation}
In addition, we know that
\begin{equation}
\label{-G}
G(-t,\Phi_{-t}(x),x)=0.
\end{equation}
Differentiating \eqref{-G} with respect to the time, we obtain 
$$-\partial_{t}G(-t,\Phi_{-t}(x),x) +\rD_{y}G(-t,\Phi_{-t}(x),x) \partial_{t}(\Phi_{-t}(x))=0.$$
It holds for all $x\in\mathbbm{T}^{2},$ and thus, pulling-back by the map $x\mapsto \Phi_{t}(x),$ we infer that
\begin{equation}
\label{eq-DG2}
-\partial_{t}G(-t,x,\Phi_{t}(x)) + \rD_{y}G(-t,x,\Phi_{t}(x)) \left[\partial_{t}(\Phi_{-t}(x))\circ \Phi_{t}(x)\right]=0.
\end{equation}
Evaluating \eqref{eq-DG2} at $t=0$ and using \eqref{DG}, we obtain
\begin{equation}
\label{step2}
\left[\partial_{t}(\Phi_{-t}(x)) \circ \Phi_{t}(x)\right]_{|t=0}=-J\nabla \psi(x).
\end{equation}
Differentiating \eqref{eq-DG2} with respect to $t$ we have, 
\begin{multline*}
-\rD_{x}\partial_{t} G(-t,x,\Phi_{t}(x)) \partial_{t}\Phi_{t}(x)-\partial_{t}\rD_{y} G(-t,x,\Phi_{t}(x)) \left[\partial_{t}(\Phi_{-t}(x)) \circ \Phi_{t}(x)\right] \\
\hspace{36mm}+ \rD_{x}\rD_{y} G(-t,x,\Phi_{t}(x)) \partial_{t}\Phi_{t}(x)\cdot\left[\partial_{t}(\Phi_{-t}(x)) \circ \Phi_{t}(x)\right]\\
+\rD_{y}G(t,\Phi_{t}(x),x) \frac{\dd}{\dd t} \left[\partial_{t}(\Phi_{-t}(x)) \circ \Phi_{t}(x)\right]=0.
\end{multline*}
Evaluating this expression at $t=0$ with the help of \eqref{DG}, \eqref{step1} and \eqref{step2} gives us
$$\frac{\dd}{\dd t} \left[\partial_{t}(\Phi_{-t}(x)) \circ \Phi_{t}(x)\right]_{|t=0}=0.$$
Using this and \eqref{step2}, we conclude by a Taylor expansion that for all $t\in [0,\tau],$
$$\partial_{t}(\Phi_{-t}(x)) \circ \Phi_{t}(x)=-J\nabla \psi(x) + \frac{t^{2}}{2}\mathcal{R}(t,x).$$
Moreover the Taylor remainder has the regularity of 
$$\frac{\dd^{2}}{\dd t^{2}} \left[\partial_{t}(\Phi_{-t}(x)) \circ \Phi_{t}(x)\right],$$
which is $C^{s-4},$ as $(t,x)\mapsto \partial_{t}(\Phi_{-t}(x))$ is $C^{s-2}$ and $(t,x)\mapsto \Phi_{t}(x)$ is $C^{s-1}.$
\end{proof}

\begin{remark}
\label{divzero}
In particular, if a function $g$ has average $0,$ then the function $g\circ \Phi_{t}$ has also average $0,$ as $\Phi_{t}$ preserves the volume.\\
This justifies our choice of a symplectic integrator, as it implies that at each step of the scheme \eqref{omega-n}, $\omega_{n}=\mathcal{S}_{\tau}^{n}(\omega_{0})$ has average $0,$ and we may define the divergence-free vector field $J\nabla\Delta^{-1}\omega_{n},$ and thus compute $\omega_{n+1},$ and so on.
\end{remark}

\subsection{Stability estimates}

Our analysis of the stability of the semi-discrete operator defined by \eqref{SD-OP} is based on the fact that the implicit midpoint rule is the composition of Euler's backward and forward methods, with half time-steps, as it was shown in the third point of Proposition \ref{Mid-topo}.\\
Therefore, to control the regularity (in space) of some function $g\circ \Phi_{t},$ we shall first analyse the effect of $\Ec_{t}$ (Lemma \ref{EulerExp} below), and then the effect of $\Ecs_{t}$ (Lemma \ref{EulerImp} below).

\begin{lemma}
\label{EulerExp}
Let $s\geq 3,$ $\omega_{0} \in H^{s}(\mathbbm{T}^{2})$ with average $0,$ and $\tau \in ]0,1[.$ Let $\psi$ be the solution of the Poisson equation $\Delta \psi =\omega_{0},$ and let $\Ec_{t}$ be the half time-step forward Euler integrator defined for $t\in [0,\tau]$ by the formula
$$\Ec_{t}(x)=x+\frac{t}{2}J\nabla\psi(x).$$
There exists two positive constants $R_{0}$ and $R_{1},$ independent of $\omega_{0},$ such that, if $\tau \left\| \omega_{0}\right\|_{H^{2}(\mathbbm{T}^{2})}R_{0}<1,$ then for all $g\in H^{s}(\mathbbm{T}^{2})$ and all $t\in [0,\tau],$
$$\left\| g\circ \Ec_{t}\right\|_{H^{s}(\mathbbm{T}^{2})}\leq e^{R_{1}t \left(1+t\left\| \omega_{0} \right\|_{H^{s}(\mathbbm{T}^{2})}\right) \left\| \omega_{0} \right\|_{H^{s}(\mathbbm{T}^{2})}}\left\| g\right\|_{H^{s}(\mathbbm{T}^{2})}.$$
\end{lemma}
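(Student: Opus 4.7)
The plan is as follows. I set $\tilde{g}(t,x) = g(\Ec_{t}(x))$. By Proposition \ref{Mid-topo}~(iii), under the smallness assumption on $\tau\|\omega_{0}\|_{H^{2}(\mathbbm{T}^{2})}$, $\Ec_{t}$ is a smooth diffeomorphism of $\mathbbm{T}^{2}$, and its Jacobian
\[
\rD_{x}\Ec_{t}(x) = I_{2} + \tfrac{t}{2}J\nabla^{2}\psi(x)
\]
is invertible uniformly in $(t,x)\in[0,\tau]\times \mathbbm{T}^{2}$. After approximating $g\in H^{s}(\mathbbm{T}^{2})$ by smooth functions and using the continuity of composition by a smooth diffeomorphism on $H^{s}(\mathbbm{T}^{2})$, the chain rule gives $\partial_{t}\tilde g = \tfrac{1}{2} J\nabla\psi\cdot (\nabla g)\circ \Ec_{t}$ and $\nabla \tilde g = \rD_{x}\Ec_{t}^{\top}(\nabla g)\circ\Ec_{t}$. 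Eliminating the factor $(\nabla g)\circ\Ec_{t}$ yields the transport equation
\[
\partial_{t}\tilde{g} - w(t,\cdot)\cdot \nabla \tilde{g} = 0, \qquad w(t,x) = \tfrac{1}{2}\bigl(I_{2}+\tfrac{t}{2}J\nabla^{2}\psi(x)\bigr)^{-1} J\nabla\psi(x),
\]
which is precisely of the form treated by Lemma \ref{stability}.

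The next step is to estimate $\|w(t)\|_{H^{s}(\mathbbm{T}^{2})}$ and $\|\nabla\cdot w(t)\|_{L^{\infty}(\mathbbm{T}^{2})}$ using only the available regularity $\omega_{0}\in H^{s}(\mathbbm{T}^{2})$. Introducing the smooth matrix-valued map $F(M) := (I_{2}+M)^{-1}-I_{2}$, defined on a neighborhood of $0$ in $M_{2}(\mathbbm{R})$ and satisfying $F(0)=0$, I decompose
\[
w(t,\cdot) = \tfrac{1}{2}J\nabla\psi + \tfrac{1}{2}F\bigl(\tfrac{t}{2}J\nabla^{2}\psi\bigr)J\nabla\psi.
\]
Applying Lemma \ref{Kato} to $F$ yields $\bigl\|F(\tfrac{t}{2}J\nabla^{2}\psi)\bigr\|_{H^{s}(\mathbbm{T}^{2})} \leq C_{s}\bigl(t\|\omega_{0}\|_{H^{2}}\bigr)\bigl(1 + t\|\omega_{0}\|_{H^{s}}\bigr)$, and the first factor stays bounded under the hypothesis on $\tau$. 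Combining with the standard Sobolev product estimate (valid since $s\geq 3>1$), the embedding \eqref{Sob-inj2}, and the Poisson estimate \eqref{poiss}, I would then obtain
\[
\|w(t)\|_{H^{s}(\mathbbm{T}^{2})} \leq K\bigl(1+t\|\omega_{0}\|_{H^{s}(\mathbbm{T}^{2})}\bigr)\|\omega_{0}\|_{H^{s}(\mathbbm{T}^{2})}.
\]
Since $J\nabla\psi$ is divergence-free, $\nabla \cdot w$ only involves the $F$-correction, and the embedding $H^{s-1}(\mathbbm{T}^{2})\hookrightarrow L^{\infty}(\mathbbm{T}^{2})$ (valid for $s\geq 3$) yields the same bound for $\|\nabla\cdot w(t)\|_{L^{\infty}(\mathbbm{T}^{2})}$.

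To conclude, I would apply estimate \eqref{EE3} of Lemma \ref{stability} to $\tilde g$ with $\rX = w$ and $h = 0$. Since $s\geq 3$, $\|\tilde g\|_{H^{3}(\mathbbm{T}^{2})}\leq \|\tilde g\|_{H^{s}(\mathbbm{T}^{2})}$, and the preceding bounds on $w$ reduce the differential inequality to
\[
\frac{\dd}{\dd t}\|\tilde g(t)\|_{H^{s}(\mathbbm{T}^{2})}^{2}\leq K'\bigl(1+t\|\omega_{0}\|_{H^{s}(\mathbbm{T}^{2})}\bigr)\|\omega_{0}\|_{H^{s}(\mathbbm{T}^{2})}\,\|\tilde g(t)\|_{H^{s}(\mathbbm{T}^{2})}^{2}.
\]
Gronwall's lemma then delivers the claimed exponential bound with an appropriate $R_{1}$.

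The main obstacle is the $H^{s}$-bound on $w$ under the available regularity $\omega_{0}\in H^{s}(\mathbbm{T}^{2})$, which only yields $\nabla^{2}\psi\in H^{s}(\mathbbm{T}^{2})$: this forces the use of estimate \eqref{EE3} rather than \eqref{EE1} (the latter would require $\rX\in H^{s+1}$), and it obliges one to handle the Jacobian inverse via Kato's composition inequality rather than a brute product estimate, which would demand extra derivatives on $\psi$.
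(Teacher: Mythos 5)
Your proposal is correct and follows essentially the same route as the paper: you identify the same transport equation with the same velocity field \eqref{chant}, bound it via the decomposition through $F(M)=(I_{2}+M)^{-1}-I_{2}$ and Lemma \ref{Kato} exactly as in Lemma \ref{fieldSD}, and conclude with estimate \eqref{EE3} and Gronwall. The only cosmetic difference is that you derive the equation by directly differentiating $g\circ\Ec_{t}$ and eliminating $(\nabla g)\circ\Ec_{t}$ via the chain rule, whereas the paper writes down the transport equation first and verifies along the characteristics $\Ecs_{-t}=\Ec_{t}^{-1}$ that its solution is $g\circ\Ec_{t}$.
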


\begin{proof}
The idea is to derive a transport equation whose initial data is $g$ and whose final data is $g\circ \Ec_{t},$ and to obtain the conclusion by Lemma \ref{stability}.\\
Let us consider the transport equation
\begin{equation}
\label{transEe}
\left\{
\begin{split}
&\partial_{t} r(t,x)- \rX(t,x) \cdot\nabla r(t,x)=0 \\
&r(0,x)=g(x),
\end{split}
\right.
\end{equation}
with 
\begin{equation}
\label{chant}
\rX(t,x)=\frac{1}{2} \left(I_{2}+\frac{t}{2}J\nabla^{2}\psi(x)\right)^{-1}J\nabla\psi(x).
\end{equation}
Note that the inversion of the above matrix has already been justified in the proof of Proposition \ref{Mid-topo}, under the hypothesis $\tau \left\| \omega_{0}\right\|_{H^{2}(\mathbbm{T}^{2})}R_{0}<1.$ As
$$\Ecs_{-t}(x)=x-\frac{t}{2}J\nabla \psi(\Ecs_{-t}(x)),$$
we have
$$\partial_{t}(\Ecs_{-t}(x)) = -\rX(t,\Ecs_{-t}(x)),$$
such that for all $(t,x)\in [0,\tau]\times \mathbbm{T}^{2},$
$$\frac{\dd}{\dd t} r(t,\Ecs_{-t}(x))=0,$$
and thus for all $(t,x)\in [0,\tau]\times \mathbbm{T}^{2},$
$$r(t,\Ecs_{-t}(x))=g(x).$$
In other words, as $\Ecs_{-t}(x)=\Ec_{t}^{-1}(x),$ we have
$$r(t,x)=g\circ \Ec_{t}(x).$$
That being said, using estimate \eqref{EE3} from Lemma \ref{stability} (with the inequality $s\geq 3$), we may write that for all $t\in [0,\tau],$
$$\frac{\dd}{\dd t}\left\| r(t,\cdot)\right\|_{H^{s}(\mathbbm{T}^{2})}^{2}\leq  C \left\| \rX(t,\cdot)\right\|_{H^{s}(\mathbbm{T}^{2})} \left\|r(t,\cdot)\right\|_{H^{s}(\mathbbm{T}^{2})}^{2}+ \left\| \nabla \cdot \rX(t,\cdot)\right\|_{L^{\infty}(\mathbbm{T}^{2})}\left\|r(t,\cdot)\right\|_{H^{s}(\mathbbm{T}^{2})}^{2}.$$
Using Lemma \ref{fieldSD} below and Gronwall's Lemma, we infer that
$$\left\| r(t,\cdot)\right\|_{H^{s}(\mathbbm{T}^{2})} \leq \left\| r(0,\cdot)\right\|_{H^{s}(\mathbbm{T}^{2})} e^{R_{1}t \left(1+t\left\| \omega_{0} \right\|_{H^{s}(\mathbbm{T}^{2})}\right) \left\| \omega_{0} \right\|_{H^{s}(\mathbbm{T}^{2})}},$$
which gives the desired conclusion, as $r(0)=g$ and $r(t)=g\circ \Ec_{t}.$
\end{proof}

\begin{lemma}
\label{fieldSD}
Let $s\geq 3.$ For $t\in[-\tau,\tau],$ with $\tau$ satisfying the hypothesis $\tau R_{0} \left\| \omega_{0}\right\|_{H^{2}(\mathbbm{T}^{2})}<1$ of Proposition \ref{Mid-topo} and Lemma \ref{EulerExp}, and $x\in\mathbbm{T}^{2},$ let us consider the vector field $\rX(t,x)$ defined by \eqref{chant}.
There exists a constant $C>0$ such that
\begin{equation}
\label{regularite}
\left\| \rX(t,\cdot)\right\|_{H^{s}(\mathbbm{T}^{2})} \leq C \left( 1 + |t|\left\| \omega_{0}\right\|_{H^{s}(\mathbbm{T}^{2})}\right)\left\| \omega_{0}\right\|_{H^{s}(\mathbbm{T}^{2})}
\end{equation}
and
\begin{equation}
\label{divergence}
\left\| \nabla \cdot \rX(t,\cdot)\right\|_{L^{\infty}(\mathbbm{T}^{2})} \leq C \left( 1 + |t|\left\| \omega_{0}\right\|_{H^{s}(\mathbbm{T}^{2})}\right)\left\| \omega_{0}\right\|_{H^{s}(\mathbbm{T}^{2})}.
\end{equation}
\end{lemma}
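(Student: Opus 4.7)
The strategy is to rewrite $\rX$ as a sum of a linear term in $\nabla\psi$ plus a nonlinear term to which Lemma \ref{Kato} applies, and then to bound everything in $H^{s}$ using the algebra property (which holds for $H^{s}$ with $s>1$) together with the Poisson regularity estimate \eqref{poiss}. The divergence bound will then follow from the Sobolev embedding applied to $\nabla\cdot\rX\in H^{s-1}$.

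Concretely, I would set $F(A)=(I_{2}+A)^{-1}-I_{2}$, which is a smooth $M_{2}(\R)$-valued map on a neighborhood $\mathcal{U}$ of $0$ satisfying $F(0)=0$. Under the hypothesis $\tau R_{0}\|\omega_{0}\|_{H^{2}}<1$ and the control \eqref{reg} on $\nabla^{2}\psi$ in $L^{\infty}$, the matrix $\frac{t}{2}J\nabla^{2}\psi(x)$ takes values in $\mathcal{U}$ for all $x$, so I may write
\[
\rX(t,x)=\tfrac{1}{2}J\nabla\psi(x)+\tfrac{1}{2}F\!\left(\tfrac{t}{2}J\nabla^{2}\psi(x)\right)J\nabla\psi(x).
\]
The first piece is bounded by $\tfrac{1}{2}\|J\nabla\psi\|_{H^{s}}\le C\|\omega_{0}\|_{H^{s}}$ directly from \eqref{poiss} (applied to $\omega_{0}$). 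For the second piece, Lemma \ref{Kato} gives
\[
\left\|F\!\left(\tfrac{t}{2}J\nabla^{2}\psi\right)\right\|_{H^{s}}\le C_{s}\!\left(\tfrac{|t|}{2}\|\nabla^{2}\psi\|_{L^{\infty}}\right)\left(1+\tfrac{|t|}{2}\|J\nabla^{2}\psi\|_{H^{s}}\right),
\]
and since $\|\nabla^{2}\psi\|_{L^{\infty}}\le C\|\omega_{0}\|_{H^{2}}$ remains bounded by the hypothesis on $\tau$, the argument of $C_{s}$ is uniformly bounded; moreover $\|\nabla^{2}\psi\|_{H^{s}}\le C\|\omega_{0}\|_{H^{s}}$ by \eqref{poiss}, so
\[
\left\|F\!\left(\tfrac{t}{2}J\nabla^{2}\psi\right)\right\|_{H^{s}}\le C\!\left(1+|t|\|\omega_{0}\|_{H^{s}}\right).
\]
Using that $H^{s}(\T^{2})$ is a Banach algebra for $s\ge 3>1$, I multiply this with $\|J\nabla\psi\|_{H^{s}}\le C\|\omega_{0}\|_{H^{s}}$ to conclude
\[
\|\rX(t,\cdot)\|_{H^{s}}\le C\|\omega_{0}\|_{H^{s}}+C\!\left(1+|t|\|\omega_{0}\|_{H^{s}}\right)\|\omega_{0}\|_{H^{s}}\le C\!\left(1+|t|\|\omega_{0}\|_{H^{s}}\right)\|\omega_{0}\|_{H^{s}},
\]
which is \eqref{regularite}.

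For the divergence estimate \eqref{divergence}, rather than computing $\nabla\cdot\rX$ explicitly I would simply note that $\nabla\cdot\rX(t,\cdot)\in H^{s-1}(\T^{2})$ with $\|\nabla\cdot\rX(t,\cdot)\|_{H^{s-1}}\le\|\rX(t,\cdot)\|_{H^{s}}$, and since $s-1\ge 2>1$, the Sobolev embedding \eqref{Sob-inj2} yields
\[
\|\nabla\cdot\rX(t,\cdot)\|_{L^{\infty}}\le C\|\nabla\cdot\rX(t,\cdot)\|_{H^{s-1}}\le C\|\rX(t,\cdot)\|_{H^{s}},
\]
so \eqref{divergence} follows immediately from \eqref{regularite}. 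The only delicate point in the whole argument is checking that the $L^{\infty}$-argument of $C_{s}$ in Lemma \ref{Kato} stays in a bounded set, but this is precisely guaranteed by the smallness condition $\tau R_{0}\|\omega_{0}\|_{H^{2}}<1$ together with \eqref{reg}; everything else is a routine combination of the algebra property, Lemma \ref{Kato}, and the gain of two derivatives in \eqref{poiss}.
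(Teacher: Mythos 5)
Your proof is correct and follows essentially the same route as the paper: the same decomposition of $\rX$ into $\tfrac12 J\nabla\psi$ plus a term of the form $F(\cdot)J\nabla\psi$ handled by Lemma \ref{Kato}, with the smallness hypothesis keeping the argument of $C_{s}$ bounded, and the Sobolev embedding \eqref{Sob-inj2} for the divergence bound. The only difference is that you make explicit the algebra property of $H^{s}$ used to multiply the two factors, which the paper leaves implicit.
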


\begin{Proof}
From definition \eqref{chant}, we may write that
$$2\rX(t,x)=J\nabla \psi (x) + F(tJ\nabla^{2} \psi(x)) J\nabla \psi(x),$$
where $F:\mathcal{U}\subset M_{2}(\mathbbm{R}) \to M_{2}(\mathbbm{R})$ is a smooth function defined on a sufficiently small neighborhood $\mathcal{U}$ of $0_{2},$ the zero element of the vector space $M_{2}(\mathbbm{R}),$ by the formula
$$F(A)= \left(I_{2} + \frac{1}{2}A\right)^{-1} - I_{2} = \sum_{n=1}^{\infty}\frac{(-1)^{n}}{2^{n}}A^{n}.$$
Since (see \eqref{reg})
$$|t| \left\|J\nabla^{2} \psi\right\|_{L^{\infty}(\mathbbm{T}^{2})}\leq C \tau \left\| \omega_{0}\right\|_{H^{2}(\mathbbm{T}^{2})},$$
we way assume, up to a proper modification of $R_{0},$ that $tJ\nabla^{2} \psi$ belongs to $\mathcal{U}$ almost everywhere. Applying Lemma \ref{Kato}, we infer that
$$\left\| \rX(t,\cdot)\right\|_{H^{s}(\mathbbm{T}^{2})}\leq \left\| J\nabla \psi\right\|_{H^{s}(\mathbbm{T}^{2})} + C_{s}\left(\left\| tJ\nabla^{2} \psi\right\|_{L^{\infty}(\mathbbm{T}^{2})}\right) \left( 1 + \left\| tJ\nabla^{2} \psi\right\|_{H^{s}(\mathbbm{T}^{2})}\right)\left\| J\nabla \psi\right\|_{H^{s}(\mathbbm{T}^{2})},$$
where $C_{s}:\mathbbm{R}_{+}\to \mathbbm{R}_{+}$ is an increasing continuous function. In view of the hypothesis $\tau R_{0} \left\| \omega_{0}\right\|_{H^{2}(\mathbbm{T}^{2})}<1$ and of estimate \eqref{reg}, we may assume that for all $|t|\leq \tau,$
$$C_{s}\left(\left\| tJ\nabla^{2} \psi\right\|_{L^{\infty}(\mathbbm{T}^{2})}\right)\leq C,$$
for some appropriate constant $C.$ Thus we obtain the estimate
$$\left\| \rX(t,\cdot)\right\|_{H^{s}(\mathbbm{T}^{2})}\leq C \left( 1 + |t|\left\| \omega_{0}\right\|_{H^{s}(\mathbbm{T}^{2})}\right)\left\| \omega_{0}\right\|_{H^{s}(\mathbbm{T}^{2})},$$
which is precisely estimate \eqref{regularite}.\\
Estimate \eqref{divergence} follows quickly using the Sobolev embedding \eqref{Sob-inj2} as follows
$$\left\| \nabla \cdot \rX(t,\cdot)\right\|_{L^{\infty}(\mathbbm{T}^{2})} \leq C \left\|\rX(t,\cdot) \right\|_{H^{3}(\mathbbm{T}^{2})}\leq \left\| \rX(t,\cdot)\right\|_{H^{s}(\mathbbm{T}^{2})} \leq C \left( 1 + |t|\left\| \omega_{0}\right\|_{H^{s}(\mathbbm{T}^{2})}\right)\left\| \omega_{0}\right\|_{H^{s}(\mathbbm{T}^{2})},$$
since $s\geq 3.$
\end{Proof}

\begin{lemma}
\label{EulerImp}
Let  $s\geq 3,$ $\omega_{0} \in H^{s}(\mathbbm{T}^{2})$ with average $0,$ and $\tau \in ]0,1[.$  Let $\psi$ be the solution of the Poisson equation $\Delta \psi =\omega_{0},$ and let $\Ecs_{t}$ be the half time-step backward Euler integrator defined for $t\in [0,\tau]$ by the formula
$$\Ecs_{t}(x)= x+\frac{t}{2}J\nabla\psi(\Ecs_{t}(x)).$$
There exists two positive constants $R_{0},R_{1},$ independent of $\omega_{0},$ such that, if $\tau R_{0}\left\| \omega_{0}\right\|_{H^{2}(\mathbbm{T}^{2})}<1,$ then for all $g\in H^{s}(\mathbbm{T}^{2})$  and $t\in [0,\tau],$
$$\left\| g\circ \Ecs_{t} \right\|_{H^{s}(\mathbbm{T}^{2})} \leq e^{R_{1}t \left(1+t\left\| \omega_{0} \right\|_{H^{s}(\mathbbm{T}^{2})}\right) \left\| \omega_{0} \right\|_{H^{s}(\mathbbm{T}^{2})}} \left\| g \right\|_{H^{s}(\mathbbm{T}^{2})}.$$
\end{lemma}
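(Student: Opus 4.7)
The strategy is to mirror the proof of Lemma \ref{EulerExp}: define $r(t, y) := g(\Ecs_t(y))$, identify it as the solution of a transport equation, and then apply Lemma \ref{stability} together with Gronwall's inequality.

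First, I would exploit the fact that $\Ecs_t^{-1} = \Ec_{-t}$ is \emph{explicit} in $y$ (assertion iii) of Proposition \ref{Mid-topo}). The identity $r(t, \Ec_{-t}(x)) = g(x)$ holds for all $(t, x) \in [0, \tau] \times \mathbbm{T}^{2}$. Differentiating with respect to $t$, using $\partial_t \Ec_{-t}(x) = -\tfrac{1}{2}J\nabla\psi(x)$, and changing variable $y = \Ec_{-t}(x)$ (so that $x = \Ecs_t(y)$), one obtains
$$
\partial_t r(t, y) - \rV(t, y) \cdot \nabla r(t, y) = 0, \qquad r(0, \cdot) = g,
$$
where the vector field is $\rV(t, y) = \tfrac{1}{2}J\nabla\psi(\Ecs_t(y))$. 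Applying estimate \eqref{EE3} of Lemma \ref{stability} (using $\|r(t)\|_{H^{3}(\mathbbm{T}^{2})} \leq \|r(t)\|_{H^{s}(\mathbbm{T}^{2})}$ since $s \geq 3$) and Gronwall yields
$$
\|g \circ \Ecs_t\|_{H^{s}(\mathbbm{T}^{2})}^{2} \leq \|g\|_{H^{s}(\mathbbm{T}^{2})}^{2} \exp\Bigl(C \int_{0}^{t} \bigl(\|\rV(\sigma)\|_{H^{s}(\mathbbm{T}^{2})} + \|\nabla \cdot \rV(\sigma)\|_{L^{\infty}(\mathbbm{T}^{2})}\bigr)\, d\sigma\Bigr).
$$

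The remaining work is to show that $\|\rV(t, \cdot)\|_{H^{s}} + \|\nabla \cdot \rV(t, \cdot)\|_{L^{\infty}}$ is bounded by $C(1 + t\|\omega_0\|_{H^{s}})\|\omega_0\|_{H^{s}}$, the analogue of Lemma \ref{fieldSD} for this backward step. The difficulty, absent in the forward Euler analysis, is that $\rV$ involves a composition with the \emph{implicit} flow $\Ecs_t$. To close the estimate, I would use the implicit identity $\Ecs_t(y) - y = t \rV(t, y)$ together with Taylor's formula to write $\rV(t, y)$ as the fixed point of $\rV = \tfrac{1}{2}J\nabla\psi(y + t\rV)$, then apply Lemma \ref{Kato} combined with a Neumann-series expansion in the spirit of Lemma \ref{fieldSD}. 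The $L^{\infty}$-bound on $\nabla \cdot \rV$ would then follow from the $H^s$-bound by the Sobolev embedding \eqref{Sob-inj2}, since $s \geq 3$.

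The main obstacle is precisely this $H^s$-estimate on $\rV$: at first sight it looks circular, since bounding $\rV$ requires estimating the $H^s$-norm of a composition with $\Ecs_t$, which is the same type of estimate the lemma itself delivers. The resolution lies in the smallness hypothesis $\tau R_0 \|\omega_0\|_{H^{2}} < 1$: the self-referential term produced by Taylor expansion carries a factor proportional to $t\|\omega_0\|_{H^2}$, small enough to be absorbed into the left-hand side. Combined with the extra regularity $J\nabla\psi \in H^{s+1}(\mathbbm{T}^{2})$ coming from Proposition \ref{poisson-reg} and with Lemma \ref{Kato}, this closes the bootstrap and yields the required bound on $\rV$. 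Plugging it into the Gronwall estimate above gives the desired exponential bound on $\|g \circ \Ecs_t\|_{H^{s}(\mathbbm{T}^{2})}$.
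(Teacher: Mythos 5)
Your transport equation is correct: writing $r(t,y)=g(\Ecs_{t}(y))$ and differentiating the identity $r(t,\Ec_{-t}(x))=g(x)$ does give
$$\partial_{t}r(t,y)-\mathrm{V}(t,y)\cdot\nabla r(t,y)=0,\qquad \mathrm{V}(t,y)=\tfrac{1}{2}J\nabla\psi\left(\Ecs_{t}(y)\right),$$
and one can check directly that $\tfrac12 \rD_{y}\Ecs_{t}(y)\,J\nabla\psi(\Ecs_{t}(y))=\partial_{t}\Ecs_{t}(y)$, so the equation is consistent. The genuine gap is the step you yourself flag: the energy estimate \eqref{EE3} requires $\left\|\mathrm{V}(t,\cdot)\right\|_{H^{s}}$, i.e.\ an $H^{s}$ bound on the composition $J\nabla\psi\circ\Ecs_{t}$, which is precisely the kind of estimate the lemma is supposed to deliver. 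Your proposed resolution does not close this circle. Writing $\mathrm{V}=\tfrac12 J\nabla\psi(y+t\mathrm{V})$ and Taylor-expanding produces a remainder of the form $\tfrac{t}{2}\int_{0}^{1}J\nabla^{2}\psi(y+\theta t\mathrm{V})\,\dd\theta\;\mathrm{V}$: the smallness of $\tau\left\|\omega_{0}\right\|_{H^{2}}$ lets you absorb the \emph{top-order} derivative of $\mathrm{V}$, but the $H^{s}$ norm of this remainder also contains the compositions $\nabla^{2}\psi(y+\theta t\mathrm{V})$ (and, after Fa\`a di Bruno, $\nabla^{1+k}\psi\circ\Ecs_{t}$ for $k$ up to $s$, multiplied by products of lower derivatives of $\mathrm{V}$), none of which is controlled by the quantity you are bootstrapping times a small factor. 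Lemma \ref{Kato} does not apply here either: it concerns $F(A)$ for a matrix-valued $A\in H^{s}$, not compositions $f\circ\Phi$ with a diffeomorphism. A full composition estimate in $H^{s}$ could certainly be proved, but it is a substantial piece of work (essentially an induction over derivative orders with Moser-type product estimates), it is not supplied by the tools you cite, and the clean bound $C(1+t\left\|\omega_{0}\right\|_{H^{s}})\left\|\omega_{0}\right\|_{H^{s}}$ you need for the Gronwall factor would itself have to be verified.

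The paper avoids this difficulty entirely by a change of viewpoint. Instead of transporting $g$ \emph{forward} to $g\circ\Ecs_{t}$ along a field that sees the implicit map, it introduces an auxiliary time $\sigma\in[0,t]$ and uses the identity $\partial_{\sigma}\Ecs_{\sigma}(x)=\rX(-\sigma,\Ecs_{\sigma}(x))$, where $\rX$ is the \emph{explicit} field \eqref{chant} already estimated in Lemma \ref{fieldSD} (it is evaluated at $x$, not at $\Ecs_{\sigma}(x)$; the composition is hidden in the characteristics, where it costs nothing). The transport equation $\partial_{\sigma}r+\rX(-\sigma,\cdot)\cdot\nabla r=0$ then carries the \emph{unknown} $g\circ\Ecs_{t}$ as initial data to the \emph{known} $g$ as final data, and Gronwall run backwards in $\sigma$ bounds the former by the latter using only the bounds of Lemma \ref{fieldSD}. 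I would recommend you adopt this reversed-time formulation: it uses exactly the field estimates you already have and removes the need for any Sobolev composition estimate.
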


\begin{proof}
Our proof ressembles the proof of Lemma \ref{EulerExp}, as we take advantage of the fact that one travels from $g\circ \Ecs_{t}$ to $g$ along the flow $\Ec_{-t}=\Ecs_{t}\hspace{0mm}^{-1} .$ Thus, instead of deriving a new equation that will transport us from $g$ to $g\circ \Ecs_{t},$ we shall use once more equation \eqref{transEe} (with the time reversed, essentially), with this time $g\circ \Ecs_{t}$ for initial data, and $g$ for final data, and we shall conclude by Lemma \ref{stability}.\\
Let us indeed consider the transport equation
\begin{equation}
\label{transEi}
\left\{
\begin{split}
&\partial_{\sigma}r(\sigma,t,x) + \rX(-\sigma,x)\cdot\nabla r(\sigma,t,x)=0\\
&r(0,t,x) = g\circ \Ecs_{t}(x),
\end{split}
\right.
\end{equation}
where the auxiliary variable $\sigma$ belongs to $[0,t],$ and where $\rX$ was defined by \eqref{chant}. Note that, as
$$\Ecs_{\sigma}(x)=x+\frac{\sigma}{2}J\nabla \psi(\Ecs_{\sigma}(x)),$$
we have the identity 
$$\partial_{\sigma}\Ecs_{\sigma}(x)=\rX(-\sigma,\Ecs_{\sigma}(x)).$$
Therefore,
$$\frac{\dd}{\dd \sigma} r(\sigma, t, \Ecs_{\sigma}(x))=0,$$
such that for all $\sigma\in [0,t]$ and $x\in\mathbbm{T}^{2},$
$$r(\sigma,t,\Ecs_{\sigma}(x))=r(0,t,x)=g\circ \Ecs_{t}(x),$$
and thus
$$r(t,t,x)= g\circ\Ecs_{t}\circ \Ec_{-t}(x)=g(x).$$
Therefore the transport equation \eqref{transEi} has the expected initial and final data. However, it will be more convenient to deal with the function
$$u(\sigma,t,x)=r(-\sigma,t,x),$$
with $\sigma\in[-t,0].$ (Essentially, we do this to apply Gronwall's Lemma in its usual statement at the end of the proof.)\\
$u$ satisfies on $[-t,0]$ the transport equation
\begin{equation}
\label{transEi2}
\left\{
\begin{split}
&\partial_{\sigma}u(\sigma,t,x) -\rX(\sigma,x)\cdot\nabla u(\sigma,t,x)=0\\
&u(0,t,x) = r(0,t,x)=g\circ \Ecs_{t}(x),\\
&u(-t,t,x) = r(t,t,x)= g(x).
\end{split}
\right.
\end{equation}
That being said, estimate \eqref{EE3} from Lemma \ref{stability} shows that for any $\sigma\in [-t,0],$
$$\frac{\dd}{\dd \sigma} \left\| u(\sigma,t,\cdot)\right\|_{H^{s}(\mathbbm{T}^{2})}^{2} \leq \left\| \rX(\sigma,\cdot)\right\|_{H^{s}(\mathbbm{T}^{2})} \left\| u(\sigma,t,\cdot)\right\|_{H^{s}(\mathbbm{T}^{2})}^{2} + \left\| \nabla \cdot \rX(\sigma,\cdot)\right\|_{L^{\infty}(\mathbbm{T}^{2})} \left\| u(\sigma,t,\cdot)\right\|_{H^{s}(\mathbbm{T}^{2})}^{2}.$$
Gronwall's Lemma and Lemma \ref{fieldSD} above imply then that
$$\left\| u(\sigma,t,\cdot)\right\|_{H^{s}(\mathbbm{T}^{2})}^{2}\leq \left\| u(-t,t,\cdot) \right\|_{H^{s}(\mathbbm{T}^{2})}^{2} \mbox{exp}\left(\int_{-t}^{\sigma} 2C \left(1+|\theta| \left\| \omega_{0} \right\|_{H^{s}(\mathbbm{T}^{2})}\right) \left\| \omega_{0} \right\|_{H^{s}(\mathbbm{T}^{2})} \dd \theta\right).$$
Taking $\sigma=0$ gives us the estimate
$$\left\| u(0,t,\cdot)\right\|_{H^{s}(\mathbbm{T}^{2})}\leq \left\| u(-t,t,\cdot) \right\|_{H^{s}(\mathbbm{T}^{2})} e^{R_{1}t \left(1+t\left\| \omega_{0} \right\|_{H^{s}(\mathbbm{T}^{2})}\right) \left\| \omega_{0} \right\|_{H^{s}(\mathbbm{T}^{2})}} ,$$
which gives the desired conclusion, using the second and third lines of \eqref{transEi2}.
\end{proof}

\begin{proposition}
\label{numstab}
Let  $s\geq 3,$ $\omega_{0} \in H^{s}(\mathbbm{T}^{2})$ with average $0,$ and $\tau \in ]0,1[.$ There exists two positive constants $R_{0},R_{1},$ independent of $\omega_{0},$ such that, if $\tau R_{0}\left\| \omega_{0}\right\|_{H^{2}(\mathbbm{T}^{2})}<1,$ then for all $t\in [0,\tau],$
$$\left\| \mathcal{S}_{t}(\omega_{0})\right\|_{H^{s}(\mathbbm{T}^{2})} \leq e^{R_{1}t \left(1+t\left\| \omega_{0} \right\|_{H^{s}(\mathbbm{T}^{2})}\right) \left\| \omega_{0} \right\|_{H^{s}(\mathbbm{T}^{2})}} \left\| \omega_{0}\right\|_{H^{s}(\mathbbm{T}^{2})},$$
where the operator $\mathcal{S}_{t}$ is defined by formula \eqref{SD-OP}.
\end{proposition}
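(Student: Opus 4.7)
The plan is to exploit the factorization $\Phi_{t} = \Ec_{t}\circ \Ecs_{t}$ proved in Proposition \ref{Mid-topo} iii), which reduces the bound for $\mathcal{S}_{t}(\omega_{0}) = \omega_{0}\circ \Phi_{t}$ to a straightforward composition of the two estimates already established for the half time-step forward and backward Euler integrators.

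First I would write
$$\mathcal{S}_{t}(\omega_{0}) = \omega_{0}\circ \Phi_{t} = (\omega_{0}\circ \Ec_{t})\circ \Ecs_{t}.$$
Both $\Ec_{t}$ and $\Ecs_{t}$ are defined through the same streamfunction $\psi$ solving $\Delta \psi = \omega_{0},$ so a single smallness condition on $\tau \|\omega_{0}\|_{H^{2}(\mathbbm{T}^{2})}$ suffices for both. Taking the constants $R_{0}$ and $R_{1}$ to be the maxima of those appearing in Proposition \ref{Mid-topo}, Lemma \ref{EulerExp} and Lemma \ref{EulerImp}, the hypothesis $\tau R_{0}\|\omega_{0}\|_{H^{2}(\mathbbm{T}^{2})} < 1$ makes all three results applicable simultaneously.

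Next I would apply Lemma \ref{EulerExp} with $g = \omega_{0}$ to obtain
$$\|\omega_{0}\circ \Ec_{t}\|_{H^{s}(\mathbbm{T}^{2})} \leq e^{R_{1} t (1 + t\|\omega_{0}\|_{H^{s}(\mathbbm{T}^{2})})\|\omega_{0}\|_{H^{s}(\mathbbm{T}^{2})}}\|\omega_{0}\|_{H^{s}(\mathbbm{T}^{2})}.$$
In particular $\omega_{0}\circ \Ec_{t} \in H^{s}(\mathbbm{T}^{2}),$ so Lemma \ref{EulerImp} applies with this function in the role of $g$ (the underlying $\omega_{0}$ in its hypothesis is unchanged, which is what allows the same $\psi,$ $R_{0}$ and $R_{1}$ to be used), yielding
$$\|(\omega_{0}\circ \Ec_{t})\circ \Ecs_{t}\|_{H^{s}(\mathbbm{T}^{2})} \leq e^{R_{1} t (1 + t\|\omega_{0}\|_{H^{s}(\mathbbm{T}^{2})})\|\omega_{0}\|_{H^{s}(\mathbbm{T}^{2})}}\|\omega_{0}\circ \Ec_{t}\|_{H^{s}(\mathbbm{T}^{2})}.$$
Multiplying the two inequalities gives the announced bound with the exponent doubled, and a trivial relabeling $R_{1}\leftarrow 2R_{1}$ recovers the precise form of the statement.

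There is no real obstacle here: all the analytic work has been packaged into the two half-step lemmas and the structural identity of Proposition \ref{Mid-topo} iii). The only bookkeeping is the uniform choice of $R_{0}$ and $R_{1}$ across the three prior results, which is immediate since each asserts the existence of such constants independent of $\omega_{0}.$
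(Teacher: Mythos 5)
Your proof is correct and follows essentially the same route as the paper: both use the factorization $\Phi_{t}=\Ec_{t}\circ\Ecs_{t}$ from Proposition \ref{Mid-topo} iii) and chain Lemma \ref{EulerImp} (applied to $g=\omega_{0}\circ\Ec_{t}$) with Lemma \ref{EulerExp} (applied to $g=\omega_{0}$), absorbing the product of exponentials into a relabeled $R_{1}$. The paper merely adds the bookkeeping device of a two-parameter function $f(\sigma,t,x)=\omega_{0}\circ\Ec_{\sigma}\circ\Ecs_{t}(x)$ before setting $\sigma=t$, which changes nothing of substance.
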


\begin{proof}
As already seen, we may write
$$\mathcal{S}_{t}(\omega_{0})= \omega_{0} \circ \Phi_{t}=\omega_{0} \circ \Ec_{t} \circ \Ecs_{t},$$
with $\Ecs_{t}$ and $\Ec_{t}$ defined in Proposition \ref{Mid-topo}. We shall apply Lemma \ref{EulerImp} and \ref{EulerExp}. However, note that in these Lemmas, we derived a bound for $g\circ \Ecs_{t}$ (or $g\circ\Ec_{t}$) where $g$ is a function that depends only on $x.$ Hence, to apply these Lemmas, we shall consider the function
$$f(\sigma,t,x)=\omega_{0}\circ \Ec_{\sigma} \circ \Ecs_{t}(x),$$
with $t,\sigma, \in[0,\tau].$\\
In view of the hypothesis $\tau R_{0}\left\| \omega_{0}\right\|_{H^{2}(\mathbbm{T}^{2})}<1,$ we may apply Lemma \ref{EulerImp}, which shows that for all $t,\sigma\in [0,\tau],$
$$\left\| f(\sigma,t,\cdot)\right\|_{H^{s}(\mathbbm{T}^{2})}\leq  e^{Ct \left(1+t\left\| \omega_{0} \right\|_{H^{s}(\mathbbm{T}^{2})}\right) \left\| \omega_{0} \right\|_{H^{s}(\mathbbm{T}^{2})}} \left\| \omega_{0} \circ \Ec_{\sigma}\right\|_{H^{s}(\mathbbm{T}^{2})},$$
for some constant $C>0.$\\
Applying now Lemma \ref{EulerExp}, we may also write that for all $\sigma \in [0,\tau],$
$$\left\| \omega_{0} \circ \Ec_{\sigma}\right\|_{H^{s}(\mathbbm{T}^{2})}\leq e^{R_{1}t \left(1+\sigma\left\| \omega_{0} \right\|_{H^{s}(\mathbbm{T}^{2})}\right) \left\| \omega_{0} \right\|_{H^{s}(\mathbbm{T}^{2})}} \left\| \omega_{0}\right\|_{H^{s}(\mathbbm{T}^{2})}.$$
Hence, for all $t,\sigma\in [0,\tau],$
$$\left\| f(\sigma,t,\cdot)\right\|_{H^{s}(\mathbbm{T}^{2})}\leq e^{Ct \left(2+(t+\sigma)\left\| \omega_{0} \right\|_{H^{s}(\mathbbm{T}^{2})}\right) \left\| \omega_{0} \right\|_{H^{s}(\mathbbm{T}^{2})}} \left\| \omega_{0}\right\|_{H^{s}(\mathbbm{T}^{2})}.$$
This gives the result by taking $\sigma=t,$ and $R_{1}=2C.$
\end{proof}

\begin{corollary}
\label{numstab2}
Let $s\geq 5,$ and $u,v \in H^{s}(\mathbbm{T}^{2})$ with average $0.$ Let $\tau \in ]0,1[.$ There exists two positive constants $R_{0},R_{1},$ independent of $u$ and $v,$ such that, if
$$\tau R_{0} \max\left(\left\| u \right\|_{H^{2}(\mathbbm{T}^{2})},\left\| v \right\|_{H^{s-3}(\mathbbm{T}^{2})}\right)<1,$$
for all $t\in [0,\tau],$
$$\left\|  \mathcal{S}_{t}(u)-\mathcal{S}_{t}(v)\right\|_{H^{s-4}(\mathbbm{T}^{2})}\leq \left\| u-v\right\|_{H^{s-4}(\mathbbm{T}^{2})} e^{\tau R_{1}\left(1+ \left\| u \right\|_{H^{s-3}(\mathbbm{T}^{2})}\right)} + R_{1} \tau^{3} \left\| u \right\|_{H^{s-3}(\mathbbm{T}^{2})}.$$
where the operator $\mathcal{S}_{t}$ is defined by formula \eqref{SD-OP}.
\end{corollary}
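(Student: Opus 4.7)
The approach is to follow the transport-equation strategy of Proposition \ref{numstab}, applied to the difference $r=\mathcal{S}_t(u)-\mathcal{S}_t(v)$, but split as $r=r_1+r_2$ with
\[
r_1(t,x)=(u-v)\circ\Phi_t^u(x),\qquad r_2(t,x)=v\circ\Phi_t^u(x)-v\circ\Phi_t^v(x).
\]
Carrying out the analysis along the transport operator associated with $V^u$ (the effective midpoint velocity attached to $u$, given by Proposition \ref{Mid-topo}(iv)) is what will leave only $\|u\|_{H^{s-3}}$ in the final estimate: $r_1$ is responsible for the linear-in-$(u-v)$ contribution and $r_2$ for the residual $\tau^3$ term.

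For $r_1$, Proposition \ref{Mid-topo}(iv) applied to the fixed function $u-v$ gives the homogeneous transport equation $\partial_t r_1+V^u\cdot\nabla r_1=0$ with $r_1(0)=u-v$ and $V^u=-J\nabla\psi^u+t^2\mathcal{R}^u$. Because $J\nabla\psi^u$ is divergence-free, $\|\nabla\cdot V^u\|_{L^\infty}=O(t^2)$, and Proposition \ref{poisson-reg} combined with Lemma \ref{Kato} applied to $\mathcal{R}^u$ gives $\|V^u(t)\|_{H^{s-2}}\leq C(1+t^2)\|u\|_{H^{s-3}}$. Using estimate \eqref{EE2} of Lemma \ref{stability} at regularity $s-4\geq 1$ (so that $\|r_1\|_{H^1}\leq\|r_1\|_{H^{s-4}}$) followed by Gronwall's lemma yields
\[
\|r_1(\tau)\|_{H^{s-4}}\leq \|u-v\|_{H^{s-4}}\,\exp\bigl(\tau R_1(1+\|u\|_{H^{s-3}})\bigr),
\]
which matches the first summand of the statement.

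The term $r_2$ vanishes at $t=0$ and, by subtracting the transport equations satisfied by $v\circ\Phi_t^u$ and $v\circ\Phi_t^v=\mathcal{S}_t(v)$, solves the inhomogeneous equation
\[
\partial_t r_2+V^u\cdot\nabla r_2=-(V^u-V^v)\cdot\nabla(v\circ\Phi_t^v),\qquad r_2(0)=0.
\]
Writing $V^u-V^v=J\nabla\Delta^{-1}(v-u)+t^2(\mathcal{R}^u-\mathcal{R}^v)$ splits the source into a piece linear in $u-v$ and a $t^2$ correction. Estimate \eqref{transport2} of Lemma \ref{transport} combined with Proposition \ref{numstab} (which yields $\|\mathcal{S}_t(v)\|_{H^{s-3}}\lesssim\|v\|_{H^{s-3}}$ under the hypothesis $\tau R_0\|v\|_{H^{s-3}}<1$) controls each piece in $H^{s-4}$. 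A Duhamel argument against the exponential propagator of $V^u$ then converts the linear piece into an $O(\tau\|v\|_{H^{s-3}})$ multiplicative factor that, thanks to the smallness hypothesis on $\tau\|v\|_{H^{s-3}}$, can be absorbed into the exponential pre-factor for $r_1$, while the $t^2$ piece contributes—after one time integration—the desired additive correction $R_1\tau^3\|u\|_{H^{s-3}}$.

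The principal technical obstacle is controlling the $t^2$ remainder term so that its integrated contribution involves only $\|u\|_{H^{s-3}}$ and not also $\|v\|_{H^{s-3}}$: this calls for a Lipschitz-type control of the remainder map $w\mapsto\mathcal{R}^w$, which can be extracted by differentiating the implicit relation \eqref{mid} and the Taylor identities used in the proof of Proposition \ref{Mid-topo}(iv), together with a systematic use of the smallness hypothesis $\tau R_0\|v\|_{H^{s-3}}<1$ to dominate the residual $\|v\|_{H^{s-3}}$-dependent terms by negative powers of $\tau$, thereby lowering the effective order of the perturbation so that only the advertised $u$-dependent $\tau^3$ tail survives.
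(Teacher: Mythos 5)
Your strategy coincides with the paper's: realize $\mathcal{S}_{t}(u)$ and $\mathcal{S}_{t}(v)$ as solutions of transport equations whose effective velocities $V^{u}=-J\nabla\Delta^{-1}u+t^{2}\mathcal{R}^{u}$ and $V^{v}=-J\nabla\Delta^{-1}v+t^{2}\mathcal{R}^{v}$ come from Proposition \ref{Mid-topo} {\bf iv)}, advect the difference by $V^{u}$ with source $(V^{v}-V^{u})\cdot\nabla\mathcal{S}_{t}(v)$, and close with estimates \eqref{EE2} and \eqref{transport2}, Proposition \ref{numstab} and Gronwall. Your decomposition $r=r_{1}+r_{2}$ is exactly the Duhamel unpacking of the single integral inequality the paper writes down, so up to that repackaging the two arguments are the same.

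The genuine gap is the production of the additive remainder in the precise form $R_{1}\tau^{3}\left\|u\right\|_{H^{s-3}(\mathbbm{T}^{2})}$. What the $t^{2}$ part of the source actually yields after one time integration is $C\int_{0}^{t}\sigma^{2}\left\|\mathcal{S}_{\sigma}(v)\right\|_{H^{s-3}(\mathbbm{T}^{2})}\dd\sigma\leq C\tau^{3}\left\|v\right\|_{H^{s-3}(\mathbbm{T}^{2})}$, i.e.\ the norm of $v$, and neither of your two proposed repairs converts this into the stated bound. Trading $\left\|v\right\|_{H^{s-3}(\mathbbm{T}^{2})}$ for $(\tau R_{0})^{-1}$ via the smallness hypothesis produces $C\tau^{2}$, which is \emph{not} dominated by $R_{1}\tau^{3}\left\|u\right\|_{H^{s-3}(\mathbbm{T}^{2})}$ --- for small $\tau$ or small $\left\|u\right\|$ the inequality goes the wrong way, so lowering the power of $\tau$ weakens the conclusion rather than saving it. The alternative, a Lipschitz estimate $\left\|\mathcal{R}^{u}-\mathcal{R}^{v}\right\|\lesssim\left\|u-v\right\|$, would if true remove the additive $\tau^{3}$ term altogether (the whole $t^{2}$ source would then be absorbed multiplicatively into the exponential), but it is a nontrivial Sobolev estimate on the derivative of $w\mapsto\Phi^{w}$ and of its second-order Taylor remainder that you only gesture at, and it is nowhere used in the paper, which contents itself with bounding $\left\|\mathcal{R}^{u}\right\|$ and $\left\|\mathcal{R}^{v}\right\|$ by constants. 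The same remark applies to the linear part of the source, whose multiplicative contribution $C\tau\left\|v\right\|_{H^{s-3}(\mathbbm{T}^{2})}\left\|u-v\right\|_{H^{s-4}(\mathbbm{T}^{2})}$ can be absorbed into $e^{\tau R_{1}(1+\left\|u\right\|_{H^{s-3}(\mathbbm{T}^{2})})}$ only after the same unjustified exchange of $\left\|v\right\|$ for $\left\|u\right\|$. In fairness, the paper performs this exchange silently when collecting its estimates, so you have put your finger on the weakest joint of the published proof; but as written your argument, like the paper's, really establishes the estimate with $\left\|v\right\|_{H^{s-3}(\mathbbm{T}^{2})}$ (or $\max(\left\|u\right\|_{H^{s-3}(\mathbbm{T}^{2})},\left\|v\right\|_{H^{s-3}(\mathbbm{T}^{2})})$) in place of $\left\|u\right\|_{H^{s-3}(\mathbbm{T}^{2})}$ in both the exponential and the $\tau^{3}$ term, which is in any case what the application in Proposition \ref{global} requires.
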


\begin{proof}
We may apply Proposition \ref{Mid-topo} and define on $[0,\tau]$ the midpoint integrator associated with $u,$ namely
$$\Phi_{t}(x)=x+tJ\nabla \Delta^{-1}u\left(\frac{x+\Phi^{t}(x)}{2}\right).$$
As
$$\mathcal{S}_{t}(u)=u\circ \Phi_{t},$$
we have 
$$\frac{\dd}{\dd t} \left[\mathcal{S}_{t}(u)\circ \Phi_{-t} \right] = 0,$$
such that $\mathcal{S}_{t}(u)$ satisfies the transport equation
\begin{equation}
\notag
\left\{
\begin{split}
&\partial_{t} \mathcal{S}_{t}(u) + V_{u}(t)\cdot \nabla \mathcal{S}_{t}(u) =0,\\
& V_{u}(t,x)= \partial_{t} \Phi_{-t} \circ\Phi_{t}(x).
\end{split}
\right.
\end{equation}
Moreover, assertions {\bf ii)} and {\bf iv)} from Proposition \ref{Mid-topo} show that $V_{u}$ has $C^{s-2}$ regularity, and that there exists a $C^{s-4}$ vector field $R_{u}(t,x)$ such that for all $(t,x)\in [0,\tau]\times \mathbbm{T}^{2},$
$$V_{u}(t,x)= -J\nabla \Delta^{-1} u + t^{2} R_{u}(t,x).$$
With the same arguments, there exists a $C^{s-2}$ vector field $V_{v}(t,x)$ such that 
\begin{equation}
\notag
\left\{
\begin{split}
&\partial_{t} \mathcal{S}_{t}(v) + V_{v}(t)\cdot \nabla \mathcal{S}_{t}(v) =0,\\
& V_{v}(t,x)= -J\nabla \Delta^{-1} v + t^{2} R_{v}(t,x).
\end{split}
\right.
\end{equation}
Moreover $R_{v}(t,x)$ has $C^{s-4}$ regularity. Therefore $\mathcal{S}_{t}(u)-\mathcal{S}_{t}(v)$ solves the equation
$$\partial_{t}(\mathcal{S}_{t}(u)-\mathcal{S}_{t}(v)) + V_{u}(t) \cdot \nabla \left(\mathcal{S}_{t}(u)-\mathcal{S}_{t}(v) \right)= \left(V_{v}(t)-V_{u}(t)\right)\cdot \nabla \mathcal{S}_{t}(v).$$
Applying estimate \eqref{EE2} from Lemma \ref{stability}, we infer that 
\begin{equation}
\notag
\begin{split}
\frac{\dd}{\dd t} \left\|  \mathcal{S}_{t}(u)-\mathcal{S}_{t}(v)\right\|_{H^{s-4}(\mathbbm{T}^{2})}^{2} &\leq C\left\| V_{u}(t) \right\|_{H^{s-2}(\mathbbm{T}^{2})}  \left\|  \mathcal{S}_{t}(u)-\mathcal{S}_{t}(v)\right\|_{H^{s-4}(\mathbbm{T}^{2})}^{2} \\
& + \left\| \nabla \cdot V_{u}(t) \right\|_{L^{\infty}(\mathbbm{T}^{2})}  \left\|  \mathcal{S}_{t}(u)-\mathcal{S}_{t}(v)\right\|_{H^{s-4}(\mathbbm{T}^{2})}^{2}\\
& + 2 \left\| \left(V_{v}(t)-V_{u}(t)\right)\cdot \nabla \mathcal{S}_{t}(v)\right\|_{H^{s-4}(\mathbbm{T}^{2})} \left\|  \mathcal{S}_{t}(u)-\mathcal{S}_{t}(v)\right\|_{H^{s-4}(\mathbbm{T}^{2})}.
\end{split}
\end{equation}
Since $V_{u}(t)$ has class $C^{s-2},$ with $s\geq 5,$ we may write that for all $t\in [0,\tau],$
$$\left\| V_{u}(t) \right\|_{H^{s-2}(\mathbbm{T}^{2})}\leq C \quad \mbox{and} \quad \left\| \nabla \cdot V_{u}(t) \right\|_{L^{\infty}(\mathbbm{T}^{2})}  \leq C.$$
Also, we may find a $C^{s-4}$ vector field $R(t,x)$ such that
$$V_{v}(t)-V_{u}(t) = J\nabla \Delta^{-1} u - J\nabla \Delta^{-1}v + t^{2} R(t,x).$$
Using estimate \eqref{transport2} from Lemma \ref{transport}, this implies that
\begin{multline*}
\left\| \left(V_{v}(t)-V_{u}(t)\right)\cdot \nabla \mathcal{S}_{t}(v)\right\|_{H^{s-4}(\mathbbm{T}^{2})} \leq C \left(\left\|J\nabla \Delta^{-1} u - J\nabla \Delta^{-1}v \right\|_{H^{s-3}(\mathbbm{T}^{2})} + t^{2}\right) \left\| \mathcal{S}_{t}(v) \right\|_{H^{s-3}(\mathbbm{T}^{2})} \\
\leq C \left(\left\| u - v \right\|_{H^{s-4}(\mathbbm{T}^{2})} + t^{2}\right) \left\| \mathcal{S}_{t}(v) \right\|_{H^{s-3}(\mathbbm{T}^{2})}.
\end{multline*}
Using Proposition \ref{numstab}, we may write, under the hypothesis $\tau R_{0} \left\| v\right\|_{H^{s-3}(\mathbbm{T}^{2})}<1,$ 
$$\left\| \mathcal{S}_{t}(v) \right\|_{H^{s-3}(\mathbbm{T}^{2})} \leq C\left\| v\right\|_{H^{s-3}(\mathbbm{T}^{2})}.$$
Collecting the previous estimates, and applying Lemma \ref{inequadiff}, we infer that
\begin{equation}
\notag
\begin{split}
\left\|  \mathcal{S}_{t}(u)-\mathcal{S}_{t}(v)\right\|_{H^{s-4}(\mathbbm{T}^{2})} &\leq \left\| u-v\right\|_{H^{s-4}(\mathbbm{T}^{2})} +  \int_{0}^{t}C \left\|  \mathcal{S}_{\sigma}(u)-\mathcal{S}_{\sigma}(v)\right\|_{H^{s-4}(\mathbbm{T}^{2})} \dd \sigma\\
& + C \tau\left\| u \right\|_{H^{s-3}(\mathbbm{T}^{2})} \left( \left\| u-v\right\|_{H^{s-4}(\mathbbm{T}^{2})} +\tau^{2}\right).
\end{split}
\end{equation}
Applying Gronwall's Lemma we conclude that
$$\left\|  \mathcal{S}_{t}(u)-\mathcal{S}_{t}(v)\right\|_{H^{s-4}(\mathbbm{T}^{2})}\leq \left\| u-v\right\|_{H^{s-4}(\mathbbm{T}^{2})} (1+ C \tau\left\| u \right\|_{H^{s-3}(\mathbbm{T}^{2})}) e^{\tau C} + C \tau^{3} \left\| u \right\|_{H^{s-3}(\mathbbm{T}^{2})}e^{C\tau}.$$
One obtains then the desired conclusion with the inequality
$$1+x\leq e^{x}$$
that holds for any $x\geq 0,$ and with an appropriate choice of the constant $R_{1}=R_{1}(C).$
\end{proof}

\section{Convergence estimates}

\subsection{Local errors}

\begin{proposition}
\label{error1}
Let $s\geq 2$ and $\omega_{0}\in H^{s}(\mathbbm{T}^{2})$ with average $0.$ There exists two positive constants $R_{0}, R_{1}$ independent of $\omega_{0},$ such that, if $\tau R_{0} \left\| \omega_{0} \right\|_{H^{s}(\mathbbm{T}^{2})} <1,$ then for all $t\in [0,\tau],$
$$\left\| \varphi_{E,t}(\omega_{0}) - \varphi_{F,t}(\omega_{0}) \right\|_{H^{s-1}(\mathbbm{T}^{2})}\leq R_{1}\tau^{2}\left\| \omega_{0} \right\|_{H^{s}(\mathbbm{T}^{2})}^{3}.$$
\end{proposition}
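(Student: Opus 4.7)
The plan is to derive an inhomogeneous transport equation for the difference $e(t) := \varphi_{E,t}(\omega_{0}) - \varphi_{F,t}(\omega_{0})$ and then apply the stability Lemma \ref{stability} at level $s-1$. Writing $\omega(t) = \varphi_{E,t}(\omega_{0})$, $f(t) = \varphi_{F,t}(\omega_{0})$, and noting that $J\nabla\psi = J\nabla\Delta^{-1}\omega_{0}$ since $\Delta\psi = \omega_{0}$, a direct subtraction of \eqref{exact} and \eqref{glace} followed by adding and subtracting $J\nabla\Delta^{-1}\omega\cdot\nabla f$ gives
$$\partial_{t} e - J\nabla\Delta^{-1}\omega \cdot \nabla e = h, \qquad h := J\nabla\Delta^{-1}(\omega - \omega_{0}) \cdot \nabla f, \qquad e(0) = 0.$$
The transport field $J\nabla\Delta^{-1}\omega$ is divergence-free, and the source contains the factor $\omega - \omega_{0}$, which should be of order $t$, so that after time-integration on $[0,\tau]$ we expect to gain the extra $\tau$ needed to reach the claimed $\tau^{2}$.

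To quantify this, I would first show that on $[0,\tau]$,
$$\|\omega(t) - \omega_{0}\|_{H^{s-1}(\mathbbm{T}^{2})} \leq C t\|\omega_{0}\|_{H^{s}(\mathbbm{T}^{2})}^{2},$$
by integrating the Euler equation and controlling $\|\partial_{t}\omega\|_{H^{s-1}} \leq C\|\omega\|_{H^{s}}^{2}$ via estimate \eqref{transport2} of Lemma \ref{transport} combined with the Poisson gain \eqref{poiss}; Proposition \ref{stab} applied under the hypothesis $\tau R_{0}\|\omega_{0}\|_{H^{s}}<1$ supplies $\|\omega(t)\|_{H^{s}} \leq 2\|\omega_{0}\|_{H^{s}}$. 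Using the same tools a second time together with the frozen-flow stability \eqref{stab-froz}, the source term obeys
$$\|h(t)\|_{H^{s-1}(\mathbbm{T}^{2})} \leq C t\|\omega_{0}\|_{H^{s}(\mathbbm{T}^{2})}^{3}.$$

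Finally, I would apply Lemma \ref{stability} to the equation for $e$ at level $s-1$, picking among \eqref{EE1}--\eqref{EE3} the variant compatible with the available regularity. For $s \geq 3$, estimate \eqref{EE1} suffices since $\|e\|_{H^{2}}\leq \|e\|_{H^{s-1}}$; in the borderline case $s = 2$ one uses \eqref{EE2}, trading a derivative on $\rX$ against a lower norm on $e$, the extra derivative being supplied by \eqref{poiss}. Since $\rX = J\nabla\Delta^{-1}\omega$ is divergence-free, the $\|\nabla\cdot\rX\|_{L^{\infty}}$ contribution vanishes, and the resulting differential inequality reads, schematically,
$$\frac{\dd}{\dd t}\|e(t)\|_{H^{s-1}(\mathbbm{T}^{2})}^{2} \leq C\|\omega_{0}\|_{H^{s}(\mathbbm{T}^{2})}\|e(t)\|_{H^{s-1}(\mathbbm{T}^{2})}^{2} + 2\|h(t)\|_{H^{s-1}(\mathbbm{T}^{2})}\|e(t)\|_{H^{s-1}(\mathbbm{T}^{2})}.$$
Gronwall's Lemma, together with $e(0)=0$ and the bound on $h$, then yields $\|e(t)\|_{H^{s-1}(\mathbbm{T}^{2})} \leq R_{1}\tau^{2}\|\omega_{0}\|_{H^{s}(\mathbbm{T}^{2})}^{3}$ on $[0,\tau]$, the exponential Gronwall factor being absorbed into a constant under the smallness assumption on $\tau$.

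The main delicate point is the careful bookkeeping of derivatives: the error is measured in $H^{s-1}$, while the velocity $J\nabla\Delta^{-1}\omega$ naturally lives at the $H^{s+1}$ level, and the difference $\omega - \omega_{0}$ is only controlled in $H^{s-1}$. This is precisely where the two-derivative smoothing property \eqref{poiss} of the operator $J\nabla\Delta^{-1}$ is essential: it must be used both to upgrade $\omega - \omega_{0} \in H^{s-1}$ into $J\nabla\Delta^{-1}(\omega-\omega_{0}) \in H^{s}$ in the source estimate via Lemma \ref{transport}, and to give the transport vector field enough regularity for Lemma \ref{stability} to apply at level $s-1$ in the borderline case $s=2$.
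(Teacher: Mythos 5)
Your proposal is correct and follows essentially the same route as the paper: the same inhomogeneous transport equation for the difference with velocity $J\nabla\Delta^{-1}\varphi_{E,t}(\omega_{0})$, the same $O(t)$ bound on $\omega(t)-\omega_{0}$ in $H^{s-1}$ feeding the source term via estimate \eqref{transport2} and \eqref{poiss}, and the same energy estimate from Lemma \ref{stability} (the paper uses \eqref{EE2} uniformly rather than your case split between \eqref{EE1} and \eqref{EE2}). The only detail worth making explicit is that, since $e(0)=0$, passing from the differential inequality for $\|e\|_{H^{s-1}}^{2}$ to an integral inequality for $\|e\|_{H^{s-1}}$ requires the regularized square-root argument of Lemma \ref{inequadiff} rather than a bare application of Gronwall.
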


\begin{proof}
$\varphi_{E,t}(\omega_{0})$ and $\varphi_{F,t}(\omega_{0})$ satisfy respectively the transport equations
$$\partial_{t} \varphi_{E,t}(\omega_{0}) - J\nabla \Delta^{-1} \varphi_{E,t}(\omega_{0})\cdot \nabla \varphi_{E,t}(\omega_{0})=0$$
and
$$\partial_{t} \varphi_{F,t}(\omega_{0}) - J\nabla \Delta^{-1} \omega_{0}\cdot \nabla \varphi_{F,t}(\omega_{0}) =0 ,$$
with initial data $\omega_{0}.$\\
Hence
\begin{multline*}
\partial_{t}(\varphi_{E,t}(\omega_{0})-\varphi_{F,t}(\omega_{0})) - J\nabla\Delta^{-1} \varphi_{E,t}(\omega_{0})\cdot \nabla\left(\varphi_{E,t}(\omega_{0})-\varphi_{F,t}(\omega_{0})\right)\\
=J\nabla\left(\Delta^{-1}\omega_{0} - \Delta^{-1} \varphi_{E,t}(\omega_{0})\right)\cdot \nabla\varphi_{F,t}(\omega_{0}).
\end{multline*}
Therefore, using estimate \eqref{EE2} from Lemma \ref{stability}, applied with the divergence-free vector field
$$X(t)=J\nabla\Delta^{-1}\varphi_{E,t}(\omega_{0}),$$
which satisfies (using Proposition \ref{poisson-reg})
$$\left\| J\nabla\Delta^{-1}\varphi_{E,t}(\omega_{0}) \right\|_{H^{s+1}(\mathbbm{T}^{2})} \leq C \left\| \varphi_{E,t}(\omega_{0}) \right\|_{H^{s}(\mathbbm{T}^{2})}.$$
we obtain the estimate
\begin{equation}
\notag
\begin{split}
&\frac{\dd}{\dd t} \left\| \varphi_{E,t}(\omega_{0})- \varphi_{F,t}(\omega_{0})\right\|_{H^{s-1}(\mathbbm{T}^{2})}^{2} \leq  C\left\| \varphi_{E,t}(\omega_{0})-\varphi_{F,t}(\omega_{0}) \right\|_{H^{s-1}(\mathbbm{T}^{2})}^{2} \left\| \varphi_{E,t}(\omega_{0})\right\|_{H^{s}(\mathbbm{T}^{2})} \\
&\hspace{30mm}+ 2\left\| J\nabla \left( \Delta ^{-1}\omega_{0}-\Delta^{-1}\varphi_{E,t}(\omega_{0})\right)\cdot \nabla\varphi_{F,t}(\omega_{0}) \right\|_{H^{s-1}(\mathbbm{T}^{2})} \left\| \varphi_{E,t}(\omega_{0})-\varphi_{F,t}(\omega_{0}) \right\|_{H^{s-1}(\mathbbm{T}^{2})}.
\end{split}
\end{equation}
In view of the hypothesis $\tau R_{0} \left\| \omega_{0}\right\|_{H^{s}(\mathbbm{T}^{2})}<1$ we may apply Proposition \ref{stab}, which shows that
$$\left\| \varphi_{E,t}(\omega_{0})\right\|_{H^{s}(\mathbbm{T}^{2})} \leq C\left\| \omega_{0}\right\|_{H^{s}(\mathbbm{T}^{2})}.$$
Also, using estimate \eqref{transport2} form Lemma \ref{transport}, and Propositions \ref{stab} and \ref{poisson-reg}, we have
\begin{multline*}
\left\| J\nabla \left( \Delta ^{-1}\omega_{0}-\Delta^{-1}\varphi_{E,t}(\omega_{0})\right) \cdot \nabla\varphi_{F,t}(\omega_{0}) \right\|_{H^{s-1}(\mathbbm{T}^{2})} \leq C \left\| \Delta^{-1}(\omega_{0}- \varphi_{E,t}(\omega_{0})) \right\|_{H^{s+1}(\mathbbm{T}^{2})}\left\| \varphi_{F,t}(\omega_{0})\right\|_{H^{s}(\mathbbm{T}^{2})} \\
\leq C \left\| \omega_{0} - \varphi_{E,t}(\omega_{0}) \right\|_{H^{s-1}(\mathbbm{T}^{2})} \left\| \omega_{0}\right\|_{H^{s}(\mathbbm{T}^{2})}
\end{multline*}
However, using once more Proposition \ref{stab} and estimate \eqref{transport2} from Lemma \ref{transport},
\begin{multline*}
\left\| \varphi_{E,t}(\omega_{0})-\omega_{0} \right\|_{H^{s-1}(\mathbbm{T}^{2})}\leq \int_{0}^{t}\left\| J\nabla \Delta^{-1} \varphi_{E,\sigma}(\omega_{0}) \cdot \nabla \varphi_{E,\sigma}(\omega_{0}) \right\|_{H^{s-1}(\mathbbm{T}^{2})}\dd \sigma \\
\leq \int_{0}^{t} \left\| \varphi_{E,\sigma}(\omega_{0})\right\|_{H^{s}(\mathbbm{T}^{2})}^{2}\leq Ct \left\| \omega_{0} \right\|_{H^{s}(\mathbbm{T}^{2})}^{2} .
\end{multline*}
Collecting the previous estimates, we infer that
\begin{multline*}
\frac{\dd}{\dd t} \left\| \varphi_{E,t}(\omega_{0})- \varphi_{F,t}(\omega_{0})\right\|_{H^{s-1}(\mathbbm{T}^{2})}^{2} \leq C\left\| \omega_{0} \right\|_{H^{s}(\mathbbm{T}^{2})}\left\| \varphi_{E,t}(\omega_{0})-\varphi_{F,t}(\omega_{0}) \right\|_{H^{s-1}(\mathbbm{T}^{2})}^{2} \\
+C\left\| \omega_{0} \right\|_{H^{s}(\mathbbm{T}^{2})}^{3}t \left\| \varphi_{E,t}(\omega_{0})-\varphi_{F,t}(\omega_{0}) \right\|_{H^{s-1}(\mathbbm{T}^{2})}.
\end{multline*}
Using Lemma \ref{inequadiff} below, we conclude that
\begin{multline*}
\left\| \varphi_{E,t}(\omega_{0})- \varphi_{F,t}(\omega_{0})\right\|_{H^{s-1}(\mathbbm{T}^{2})}\leq C\left\| \omega_{0} \right\|_{H^{s}(\mathbbm{T}^{2})} \int_{0}^{t} \left\| \varphi_{E,\sigma}(\omega_{0})-\varphi_{F,\sigma}(\omega_{0})\right\|_{H^{s-1}(\mathbbm{T}^{2})}\dd \sigma\\
+ C\left\| \omega_{0} \right\|_{H^{s}(\mathbbm{T}^{2})}^{3}\int_{0}^{t} \sigma \dd \sigma.
\end{multline*}
Thus, if $\tau C\left\| \omega_{0} \right\|_{H^{s}(\mathbbm{T}^{2})}<1,$ which we may assume, choosing if necessary $R_{0} >C,$ we have
$$\sup_{t\in[0,\tau]} \left\| \varphi_{E,t}(\omega_{0})- \varphi_{F,t}(\omega_{0})\right\|_{H^{s-1}(\mathbbm{T}^{2})}\leq R_{1}\left\| \omega_{0} \right\|_{H^{s}(\mathbbm{T}^{2})}^{3} \tau^{2},$$
for some appropriate constant $R_{1}(C).$
\end{proof}

The previous proof uses the following result, inspired by Lemma $2.9$ of \cite{F-G}.

\begin{lemma} 
\label{inequadiff}
Let $f:\mathbbm{R}\to \mathbbm{R}_{+}$ be a continuous function, and $y: \mathbbm{R}\to \mathbbm{R}_{+} $be a differentiable function satisfying the inequality 
$$\forall t \in\mathbbm{R}, \quad \frac{\dd}{\dd t} y(t)\leq 2C_{1}y(t) + 2C_{2} \sqrt{y(t)}f(t),$$
where $C_{1}$ and $C_{2}$ are two positive constants. Then
$$\forall t \in\mathbbm{R}, \quad \sqrt{y(t)}\leq \sqrt{y(0)} + C_{1}\int_{0}^{t} \sqrt{y(\sigma)} \dd \sigma + C_{2} \int_{0}^{t}f(\sigma) \dd \sigma.$$
\end{lemma}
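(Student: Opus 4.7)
The obstacle is the non-differentiability of $t \mapsto \sqrt{y(t)}$ at zeros of $y$, which prevents dividing the hypothesis by $2\sqrt{y(t)}$ directly. The plan is a standard regularization: for $\varepsilon > 0$, I set $z_\varepsilon(t) = \sqrt{y(t)+\varepsilon}$, which is everywhere differentiable on $\mathbbm{R}$, with
$$z_\varepsilon'(t) = \frac{y'(t)}{2\sqrt{y(t)+\varepsilon}}.$$

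Injecting the hypothesis into this expression and using the two elementary bounds $y(t) \leq y(t)+\varepsilon$ and $\sqrt{y(t)} \leq \sqrt{y(t)+\varepsilon}$, I would obtain the pointwise inequality
$$z_\varepsilon'(t) \;\leq\; C_1 \frac{y(t)}{\sqrt{y(t)+\varepsilon}} + C_2 f(t) \frac{\sqrt{y(t)}}{\sqrt{y(t)+\varepsilon}} \;\leq\; C_1 z_\varepsilon(t) + C_2 f(t).$$
Integrating this linear differential inequality on $[0,t]$ (for $t \geq 0$; the case $t \leq 0$ is symmetric after reversing the orientation) gives
$$z_\varepsilon(t) \;\leq\; z_\varepsilon(0) + C_1 \int_0^t z_\varepsilon(\sigma) \dd \sigma + C_2 \int_0^t f(\sigma) \dd \sigma.$$

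Finally, I would let $\varepsilon \to 0^+$. Pointwise, $z_\varepsilon(t) \to \sqrt{y(t)}$; and for $0 < \varepsilon \leq 1$, $z_\varepsilon(\sigma) \leq \sqrt{y(\sigma)+1}$, which is a continuous, hence bounded, function on the compact subinterval spanned by $0$ and $t$. Dominated convergence applied to the integral of $z_\varepsilon$, combined with the pointwise limits of $z_\varepsilon(t)$ and $z_\varepsilon(0)$, yields the desired estimate. No real obstacle is expected; the argument is essentially a one-line manipulation of the hypothesis followed by a dominated convergence step, and the only slightly delicate item is keeping track of the fact that the regularization is done precisely to make $z_\varepsilon$ differentiable so that classical integration of a differential inequality may be applied.
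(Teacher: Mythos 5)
Your proof is correct and follows essentially the same route as the paper: regularize with $y_{\eps}=y+\eps$, differentiate $\sqrt{y_{\eps}}$, integrate the resulting inequality, and let $\eps\to 0$. The only (cosmetic) difference is that you bound the ratios $y/\sqrt{y_{\eps}}\leq \sqrt{y_{\eps}}$ and $\sqrt{y}/\sqrt{y_{\eps}}\leq 1$ pointwise before integrating, whereas the paper keeps them inside the integrals and passes to the limit there.
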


\begin{proof}
For $\eps >0,$ we define $y_{\eps}=y+\eps.$ We have then
$$\frac{\dd }{\dd t} \sqrt{y_{\eps}(t)}=\frac{1}{2\sqrt{y_{\eps}(t)}} \frac{\dd} {\dd t} y(t) \leq \frac{C_{1} y(t)}{\sqrt{y_{\eps}(t)}} + \frac{C_{2}\sqrt{y(t)}}{\sqrt{y_{\eps}(t)}} f(t).$$
Therefore
$$\sqrt{y_{\eps}(t)}\leq \sqrt{y_{\eps}(0)} + C_{1}\int_{0}^{t} \frac{ y(\sigma)}{\sqrt{y_{\eps}(\sigma)}}\dd \sigma + C_{2}\int_{0}^{t} \frac{\sqrt{y(\sigma)}}{\sqrt{y_{\eps}(\sigma)}} f(\sigma)\dd \sigma.$$
Taking then the limit $\eps\to 0$ proves the Lemma.
\end{proof}

\begin{proposition}
\label{error2}
Let $s\geq 5$ and $\omega_{0}\in H^{s}(\mathbbm{T}^{2}),$ with average $0.$ There exists two positive constants $R_{0}$ and $R_{1},$ independent of $\omega_{0},$ such that, if $\tau \left\| \omega_{0} \right\|_{H^{s-3}(\mathbbm{T}^{2})}R_{0}<1,$ then for all $t\in [0,\tau],$
$$\left\| \varphi_{F,t}(\omega_{0}) - \mathcal{S}_{t}(\omega_{0})\right\|_{H^{s-4}(\mathbbm{T}^{2})} \leq R_{1} \tau^{3} \left\| \omega_{0}\right\|_{H^{s-3}(\mathbbm{T}^{2})} .$$
\end{proposition}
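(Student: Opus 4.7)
The plan is to derive an evolution equation for the error $e_t := \varphi_{F,t}(\omega_0) - \mathcal{S}_t(\omega_0)$ in which the discrepancy between the two flows appears as a source term of order $t^2$, and then conclude via the stability Lemma \ref{stability} combined with a Gronwall argument (Lemma \ref{inequadiff}). The frozen flow satisfies \eqref{glace}, i.e. $\partial_t \varphi_{F,t}(\omega_0) - J\nabla\psi \cdot \nabla \varphi_{F,t}(\omega_0) = 0$ with $\Delta\psi = \omega_0$. For the semi-discrete operator $\mathcal{S}_t(\omega_0) = \omega_0 \circ \Phi_t$, differentiating the identity $\mathcal{S}_t(\omega_0)\circ \Phi_{-t} = \omega_0$ in time (exactly as in the proof of Corollary \ref{numstab2}) and invoking assertion \textbf{iv)} of Proposition \ref{Mid-topo} give
\begin{equation}\notag
\partial_t \mathcal{S}_t(\omega_0) + \bigl(-J\nabla\psi + t^2 \mathcal{R}(t,\cdot)\bigr) \cdot \nabla \mathcal{S}_t(\omega_0) = 0,
\end{equation}
with a $C^{s-4}$ vector field $\mathcal{R}(t,x)$. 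Subtracting the frozen equation yields
\begin{equation}\notag
\partial_t e_t - J\nabla\psi \cdot \nabla e_t = t^2 \mathcal{R}(t,\cdot) \cdot \nabla \mathcal{S}_t(\omega_0), \qquad e_0 = 0.
\end{equation}

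Since $s - 4 \ge 1$ and $J\nabla\psi$ is divergence-free, estimate \eqref{EE2} of Lemma \ref{stability} applied at regularity $s-4$ gives
\begin{equation}\notag
\frac{\dd}{\dd t}\|e_t\|_{H^{s-4}}^2 \le C\|J\nabla\psi\|_{H^{s-2}} \|e_t\|_{H^{s-4}}^2 + 2\|h(t)\|_{H^{s-4}} \|e_t\|_{H^{s-4}},
\end{equation}
where $h(t) = t^2 \mathcal{R}(t,\cdot) \cdot \nabla \mathcal{S}_t(\omega_0)$ and $\|J\nabla\psi\|_{H^{s-2}} \le C\|\omega_0\|_{H^{s-3}}$ by Proposition \ref{poisson-reg}. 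The source term is handled by estimate \eqref{transport2} of Lemma \ref{transport} applied with vector field $\mathcal{R}(t,\cdot)$ at regularity $s-4$ (using $s-3 \ge 2$):
\begin{equation}\notag
\|h(t)\|_{H^{s-4}} \le C t^2 \|\mathcal{R}(t,\cdot)\|_{H^{s-3}} \|\mathcal{S}_t(\omega_0)\|_{H^{s-3}},
\end{equation}
and Proposition \ref{numstab} applied at regularity $s-3$ gives $\|\mathcal{S}_t(\omega_0)\|_{H^{s-3}} \le C\|\omega_0\|_{H^{s-3}}$ under the bootstrap hypothesis.

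The main technical obstacle is the $H^{s-3}$ estimate of the Taylor remainder $\mathcal{R}(t,\cdot)$. From the proof of assertion \textbf{iv)} of Proposition \ref{Mid-topo}, $\mathcal{R}$ is the second-order remainder in $t$ of the vector field $V(t,x) = \partial_t \Phi_{-t} \circ \Phi_t(x)$, and is built from nonlinear combinations of $\nabla\psi,\nabla^2\psi,\nabla^3\psi$ composed with $\Phi_t$, together with the inverse matrix $A_t(JY^t(x))^{-1}$ expanded via the geometric series \eqref{inverse-A}. Using Lemma \ref{Kato} to control the matrix inversion, the composition estimate of Lemma \ref{EulerExp} for terms of the form $f\circ \Phi_t$, and standard tame-product estimates in Sobolev spaces, one bounds $\|\mathcal{R}(t,\cdot)\|_{H^{s-3}}$ by a quantity controlled by $\|\omega_0\|_{H^{s-3}}$, which once combined with the hypothesis $\tau R_0 \|\omega_0\|_{H^{s-3}} < 1$ yields an estimate of the form $\|h(t)\|_{H^{s-4}} \le C t^2 \|\omega_0\|_{H^{s-3}}$. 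Plugging this into Lemma \ref{inequadiff} applied with $y(t) = \|e_t\|_{H^{s-4}}^2$, $C_1 = C\|\omega_0\|_{H^{s-3}}$ and $f(t) = C t^2 \|\omega_0\|_{H^{s-3}}$, and integrating over $[0,\tau]$, one obtains $\|e_t\|_{H^{s-4}} \le R_1 \tau^3 \|\omega_0\|_{H^{s-3}}$, since $e_0 = 0$ and the exponential factor from Gronwall remains bounded under the small-$\tau$ hypothesis.
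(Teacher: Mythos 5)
Your proof follows the paper's argument essentially step for step: the same error equation with source $t^{2}\mathcal{R}(t,\cdot)\cdot\nabla\mathcal{S}_{t}(\omega_{0})$ obtained from Proposition \ref{Mid-topo} \textbf{iv)}, the same energy estimate \eqref{EE2} applied with the divergence-free field $J\nabla\Delta^{-1}\omega_{0}$ at regularity $s-4$, the same appeal to Proposition \ref{numstab} to bound $\left\|\mathcal{S}_{t}(\omega_{0})\right\|_{H^{s-3}(\mathbbm{T}^{2})}$, and the same conclusion via Lemma \ref{inequadiff} and Gronwall. The one place you deviate is the product estimate for the source term: you invoke \eqref{transport2} with $\mathcal{R}(t,\cdot)$ as the vector field, which forces you to control $\left\|\mathcal{R}(t,\cdot)\right\|_{H^{s-3}(\mathbbm{T}^{2})}$ --- a quantity that the $C^{s-4}$ regularity of $\mathcal{R}$ established in Proposition \ref{Mid-topo} \textbf{iv)} does not by itself deliver (it gives one derivative fewer, and only in the sup norm), whence the extra structural analysis of the Taylor remainder that you rightly identify as the main obstacle of your write-up. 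The paper sidesteps this entirely: since all derivatives $\partial_{x}^{\beta}\mathcal{R}$ with $|\beta|\leq s-4$ are bounded in $L^{\infty}$, a direct Leibniz expansion of $\partial_{x}^{\alpha}\left(\mathcal{R}\cdot\nabla\mathcal{S}_{t}(\omega_{0})\right)$ for $|\alpha|\leq s-4$ gives $\left\|\mathcal{R}\cdot\nabla\mathcal{S}_{t}(\omega_{0})\right\|_{H^{s-4}(\mathbbm{T}^{2})}\leq C\left\|\mathcal{S}_{t}(\omega_{0})\right\|_{H^{s-3}(\mathbbm{T}^{2})}$ with no Sobolev norm of $\mathcal{R}$ required. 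Your sketch for bounding $\left\|\mathcal{R}\right\|_{H^{s-3}(\mathbbm{T}^{2})}$ is plausible but is the weakest link; replacing it by this $L^{\infty}$--$L^{2}$ product estimate closes the argument cleanly and matches the paper.
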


\begin{proof}
In view of the hypothesis $\tau \left\| \omega_{0} \right\|_{H^{s-3}(\mathbbm{T}^{2})}R_{0}<1$ and $s\geq 5,$ we may assume that we are in the frame of Propositions \ref{Mid-topo} and \ref{numstab}, and thus that their respective conclusions hold.\\
That being said, it was shown in the proof of Corollary \ref{numstab2} that there exists a $C^{s-2}$ vector field $V(t,x)$ such that $\mathcal{S}_{t}(\omega_{0})$ solves the equation
$$\partial_{t} \mathcal{S}_{t}(\omega_{0}) + V(t,\cdot) \cdot \nabla \mathcal{S}_{t}(\omega_{0})=0,$$
on $[0,\tau],$ with initial data $\omega_{0}.$ Moreover, it was also shown with the help of Proposition \ref{Mid-topo}, that there exists a $C^{s-4}$ vector field $\mathcal{R}:[0,\tau]\times \mathbbm{T}^{2}\to \mathbbm{T}^{2}$ such that
\begin{equation}
\label{DL}
V(t,x)+J\nabla\Delta^{-1}\omega_{0}(x)= t^{2} \mathcal{R}(t,x).
\end{equation}
Meanwhile, $\varphi_{F,t}(\omega_{0})$ satisfies the equation
$$\partial_{t} \varphi_{F,t}(\omega_{0}) - J\nabla \Delta^{-1}\omega_{0}\cdot \nabla \varphi_{F,t}(\omega_{0})=0.$$
Hence we have
$$\partial_{t}\left(\varphi_{F,t}(\omega_{0}) - \mathcal{S}_{t}(\omega_{0})\right) - J\nabla \Delta^{-1}\omega_{0}\cdot \nabla\left(\varphi_{F,t}(\omega_{0}) - \mathcal{S}_{t}(\omega_{0})\right) = \left(V+J\nabla\Delta^{-1}\omega_{0}\right)\cdot \nabla \mathcal{S}_{t}(\omega_{0}).$$
Therefore, using estimate \eqref{EE2} from Lemma \ref{stability}, applied to the vector field
$$X= J\nabla \Delta^{-1}\omega_{0},$$
that satisfies (using Proposition \ref{poisson-reg})
$$\left\| J\nabla \Delta^{-1}\omega_{0}\right\|_{H^{s-2}(\mathbbm{T}^{2})} \leq C\left\| \omega_{0}\right\|_{H^{s-3}(\mathbbm{T}^{2})},$$
we obtain the estimate
\begin{multline*}
\frac{\dd}{\dd t} \left\| \varphi_{F,t}(\omega_{0}) - \mathcal{S}_{t}(\omega_{0})\right\|_{H^{s-4}(\mathbbm{T}^{2})}^{2} \leq C\left\|\omega_{0} \right\|_{H^{s-3}(\mathbbm{T}^{2})} \left\| \varphi_{F,t}(\omega_{0}) - \mathcal{S}_{t}(\omega_{0})\right\|_{H^{s-4}(\mathbbm{T}^{2})}^{2} \\
+2 \left\| \varphi_{F,t}(\omega_{0}) - \mathcal{S}_{t}(\omega_{0})\right\|_{H^{s-4}(\mathbbm{T}^{2})} \left\| \left(V+J\nabla\Delta^{-1}\omega_{0}\right)\cdot \nabla \mathcal{S}_{t}(\omega_{0})\right\|_{H^{s-4}(\mathbbm{T}^{2})}.
\end{multline*}
Moreover, using identity \eqref{DL} and the $C^{s-4}$ regularity of $\mathcal{R},$ we have
$$\left\| \left(V+J\nabla\Delta^{-1}\omega_{0}\right)\cdot \nabla \mathcal{S}_{t}(\omega_{0})\right\|_{H^{s-4}(\mathbbm{T}^{2})} \leq C  t^{2} \left\| \mathcal{S}_{t}(\omega_{0})\right\|_{H^{s-3}(\mathbbm{T}^{2})}.$$
By Proposition \ref{numstab} and the hypothesis $\tau R_{0} \left\| \omega_{0}\right\|_{H^{s-3}(\mathbbm{T}^{2})}<1,$ we may write that
$$ \left\| \mathcal{S}_{t}(\omega_{0})\right\|_{H^{s-3}(\mathbbm{T}^{2})} \leq C \left\| \omega_{0}\right\|_{H^{s-3}(\mathbbm{T}^{2})}.$$
Collecting the previous estimates, we infer that
\begin{multline*}
\frac{\dd}{\dd t} \left\| \varphi_{F,t}(\omega_{0}) - \mathcal{S}_{t}(\omega_{0})\right\|_{H^{s-4}(\mathbbm{T}^{2})}^{2} \leq C\left\|\omega_{0} \right\|_{H^{s-3}(\mathbbm{T}^{2})} \left\| \varphi_{F,t}(\omega_{0}) - \mathcal{S}_{t}(\omega_{0})\right\|_{H^{s-4}(\mathbbm{T}^{2})}^{2} \\
+C \left\| \omega_{0}\right\|_{H^{s-3}(\mathbbm{T}^{2})} t^{2} \left\| \varphi_{F,t}(\omega_{0}) - \mathcal{S}_{t}(\omega_{0})\right\|_{H^{s-4}(\mathbbm{T}^{2})} .
\end{multline*}
Lemma \ref{inequadiff} gives us then the estimate
\begin{multline*}
\left\| \varphi_{F,t}(\omega_{0}) - \mathcal{S}_{t}(\omega_{0})\right\|_{H^{s-4}(\mathbbm{T}^{2})}\leq C\left\|\omega_{0}\right\|_{H^{s-3}(\mathbbm{T}^{2})}\int_{0}^{t}\left\| \varphi_{F,\sigma}(\omega_{0}) - \mathcal{S}_{\sigma}(\omega_{0})\right\|_{H^{s-4}(\mathbbm{T}^{2})}\dd \sigma \\
+C \tau^{3} \left\| \omega_{0}\right\|_{H^{s-3}(\mathbbm{T}^{2})}.
\end{multline*}
Therefore, if $\tau C \left\| \omega_{0}\right\|_{H^{s-3}(\mathbbm{T}^{2})}<1,$ which we may assume, choosing if necessary $R_{0} >C,$ we conclude that
$$\sup_{t\in [0,\tau]} \left\| \varphi_{F,t}(\omega_{0}) - \mathcal{S}_{t}(\omega_{0})\right\|_{H^{s-4}(\mathbbm{T}^{2})} \leq R_{1} \tau^{3} \left\| \omega_{0}\right\|_{H^{s-3}(\mathbbm{T}^{2})} ,$$
for some appropriate constant $R_{1}(C).$
\end{proof}

\subsection{An {\it a priori} global error estimate}

\begin{proposition}
\label{global}
Let $s\geq 5,$ and $\omega_{0} \in H^{s}(\mathbbm{T}^{2})$ with average $0.$ Let $\omega(t)\in C^{0}\left(\mathbbm{R}_{+},H^{s}(\mathbbm{T}^{2})\right)$ be the unique solution of equation \eqref{euler2d} with initial data $\omega_{0},$ given by Theorem \ref{existence}. For a time step $\tau\in ]0,1[,$ let $(\omega_{n})_{n\in\mathbbm{N}}$ be the sequence of functions starting from $\omega_{0}$ and defined by formula \eqref{omega-n} from iterations of the semi-discrete operator \eqref{SD-OP}. Assume that there exists a time $T_{0}>0$ and a constant $B>0$ such that
$$\sup_{t\in [0,T_{0}]}\left\| \omega(t)\right\|_{H^{s}(\mathbbm{T}^{2})} \leq B \quad \mbox{and} \quad \sup_{t_{n}\leq T_{0}} \left\| \omega_{n}\right\|_{H^{2}(\mathbbm{T}^{2})} \leq 2B.$$
Then there exists two positive constants $R_{0},R_{1}$ such that, if $\tau R_{0} B<1,$ 
$$\left\| \omega_{n}-\omega(t_{n})\right\|_{H^{s-4}(\mathbbm{T}^{2})} \leq \tau R(B)t_{n} e^{R_{1}T_{0}(1+B)},$$
for all $t_{n}\leq T_{0}+\tau,$ where $R:\mathbbm{R}_{+} \to \mathbbm{R}_{+}$ is an increasing continuous function that satisfies
$$R(B)\leq R_{1} \left(B + B^{3}\right).$$
\end{proposition}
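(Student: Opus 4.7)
The plan is to run a discrete Gronwall (Lady Windermere's fan) argument: at each step write the global error $e_n := \omega_n - \omega(t_n)$ as a local consistency error plus the propagated past error, and iterate. The one-step decomposition is
\begin{equation*}
e_n = \left[\mathcal{S}_\tau(\omega_{n-1}) - \mathcal{S}_\tau(\omega(t_{n-1}))\right] + \left[\mathcal{S}_\tau(\omega(t_{n-1})) - \varphi_{E,\tau}(\omega(t_{n-1}))\right],
\end{equation*}
and the second (local) bracket is itself split by inserting the frozen flow $\varphi_{F,\tau}(\omega(t_{n-1}))$. Proposition \ref{error2} bounds the midpoint defect $\mathcal{S}_\tau(\omega(t_{n-1})) - \varphi_{F,\tau}(\omega(t_{n-1}))$ in $H^{s-4}$ by $R_{1}\tau^{3}\|\omega(t_{n-1})\|_{H^{s-3}}\leq R_{1}\tau^{3}B$, while Proposition \ref{error1} bounds the freezing defect $\varphi_{F,\tau}(\omega(t_{n-1})) - \varphi_{E,\tau}(\omega(t_{n-1}))$ in $H^{s-1}$, hence a fortiori in $H^{s-4}$ since $s\geq 5$, by $R_{1}\tau^{2}\|\omega(t_{n-1})\|_{H^{s}}^{3}\leq R_{1}\tau^{2}B^{3}$. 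The total local error is therefore at most $\tau^{2}R(B)$ with $R(B)\lesssim B+B^{3}$, matching the expected shape.

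For the propagated error I would apply Corollary \ref{numstab2} with $u=\omega_{n-1}$ and $v=\omega(t_{n-1})$: its hypothesis $\tau R_{0}\max(\|u\|_{H^{2}},\|v\|_{H^{s-3}})<1$ reduces to $2\tau R_{0}B<1$, which is implied by the assumptions. This gives
\begin{equation*}
\|\mathcal{S}_\tau(\omega_{n-1})-\mathcal{S}_\tau(\omega(t_{n-1}))\|_{H^{s-4}} \leq \|e_{n-1}\|_{H^{s-4}}\,e^{\tau R_{1}(1+\|\omega_{n-1}\|_{H^{s-3}})} + R_{1}\tau^{3}\|\omega_{n-1}\|_{H^{s-3}}.
\end{equation*}
To control the factor $\|\omega_{n-1}\|_{H^{s-3}}$ I would run a parallel bootstrap using Proposition \ref{numstab}, whose hypothesis requires only the $H^{2}$ smallness already provided: iterating it yields a bound $\|\omega_k\|_{H^{s-3}}\leq B_{\star}$ with $B_{\star}$ depending only on $B$ and $T_{0}$, absorbable into the final exponential factor $e^{R_{1}T_{0}(1+B)}$.

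Combining the two ingredients produces the recurrence $\|e_n\|_{H^{s-4}} \leq \|e_{n-1}\|_{H^{s-4}}\,e^{\tau R_{1}(1+B)} + \tau^{2}R(B)$ with $e_0 = 0$. Iterating via $\sum_{k=0}^{n-1}e^{k\tau R_{1}(1+B)} \leq n\,e^{n\tau R_{1}(1+B)}$ gives
\begin{equation*}
\|e_n\|_{H^{s-4}} \leq \tau^{2}R(B)\cdot\tfrac{t_n}{\tau}\cdot e^{R_{1}t_n(1+B)} \leq \tau\, R(B)\,t_n\,e^{R_{1}T_{0}(1+B)},
\end{equation*}
which is the claimed estimate; the extension to $t_{n}=T_{0}+\tau$ follows from one further application of the same one-step estimate. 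The main obstacle is the control of $\|\omega_{n-1}\|_{H^{s-3}}$: only the $H^{2}$ bound is provided by hypothesis, so obtaining the clean exponent $R_{1}(1+B)$ in Corollary \ref{numstab2} requires a separate bootstrap of the higher Sobolev norm, and a careful check that the resulting sub-exponential growth can be absorbed into $e^{R_{1}T_{0}(1+B)}$ without degrading the constants.
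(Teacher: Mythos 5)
Your overall architecture is the same as the paper's: the one-step decomposition inserting $\varphi_{F,\tau}(\omega(t_{n-1}))$, Propositions \ref{error2} and \ref{error1} for the two local defects (giving the $R_{1}B\tau^{3}$ and $R_{1}B^{3}\tau^{2}$ contributions), Corollary \ref{numstab2} for the propagation of the past error, and the discrete Gronwall summation $\sum_{k}e^{k\tau R_{1}(1+B)}\leq n e^{n\tau R_{1}(1+B)}$. That part is fine.

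The genuine gap is in how you invoke Corollary \ref{numstab2}. You take $u=\omega_{n-1}$ and $v=\omega(t_{n-1})$, which places $\left\|\omega_{n-1}\right\|_{H^{s-3}(\mathbbm{T}^{2})}$ both in the exponent and in the $O(\tau^{3})$ term of the conclusion, and you correctly identify that only the $H^{2}$ norm of the iterates is provided by hypothesis. But the repair you propose does not work: iterating Proposition \ref{numstab} gives the recursion $a_{k+1}\leq a_{k}\,e^{R_{1}\tau(1+\tau a_{k})a_{k}}$ for $a_{k}=\left\|\omega_{k}\right\|_{H^{s-3}(\mathbbm{T}^{2})}$, and over $n\sim T_{0}/\tau$ steps any candidate uniform bound $M$ must satisfy $a_{0}\,e^{R_{1}T_{0}(1+M)M}\leq M$, a self-referential inequality with \emph{no} solution once $R_{1}T_{0}$ is not small (the map $M\mapsto M e^{-R_{1}T_{0}(1+M)M}$ is bounded by a quantity tending to $0$ as $T_{0}\to\infty$). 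So the bootstrap does not close, and no constant $B_{\star}(B,T_{0})$ is produced this way. The paper avoids the problem by assigning the roles the other way: $u=\varphi_{E,t_{n}}(\omega_{0})=\omega(t_{n})$, whose $H^{s-3}$ norm is controlled by the standing hypothesis $\sup_{t\leq T_{0}}\left\|\omega(t)\right\|_{H^{s}(\mathbbm{T}^{2})}\leq B$, and $v=\omega_{n}$; then the exponent is $e^{\tau R_{1}(1+B)}$ and the remainder is $R_{1}B\tau^{3}$ with no high norm of the numerical solution appearing. This is also the reason Proposition \ref{global} is deliberately stated with only an $H^{2}$ bound on the $\omega_{n}$: that low norm is the one bootstrapped afterwards, in the proof of Theorem \ref{convergence}, using the convergence estimate itself rather than the stability of $\mathcal{S}_{\tau}$. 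You should therefore swap $u$ and $v$ in your application of Corollary \ref{numstab2} and drop the high-norm bootstrap entirely.
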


\begin{proof}
We shall use the notations introduced previously
$$\omega_{n}=\mathcal{S}_{\tau}^{n}(\omega_{0}) \quad \mbox{and} \quad \omega(t_{n})=\varphi_{E,t_{n}}(\omega_{0}).$$
We may choose $R_{0}$ such that, if $\tau R_{0} B<1,$ Proposition \ref{numstab} holds when applied to any term of the sequence $\mathcal{S}_{\tau}^{n}(\omega_{0}),$ for $t_{n}\leq T_{0}.$ This implies that $\mathcal{S}_{\tau}^{n}(\omega_{0}) $ belongs to $H^{s}$ for all $t_{n}\leq T_{0}+\tau.$\\
That being said, the semi-discrete error is inductively expanded as follows
\begin{equation}
\notag
\begin{split}
\left\| \mathcal{S}_{\tau}^{n+1}(\omega_{0})-\varphi_{E,t_{n+1}}(\omega_{0})\right\|_{H^{s-4}(\mathbbm{T}^{2})} &\leq \left\|  \mathcal{S}_{\tau}\left(\mathcal{S}_{\tau}^{n}(\omega_{0})\right) - \mathcal{S}_{\tau}\left(\varphi_{E,t_{n}}(\omega_{0})\right)  \right\|_{H^{s-4}(\mathbbm{T}^{2})}\\
&+ \left\| \mathcal{S}_{\tau}\left(\varphi_{E,t_{n}}(\omega_{0})\right)- \varphi_{F,\tau}\left(\varphi_{E,t_{n}}(\omega_{0})\right) \right\|_{H^{s-4}(\mathbbm{T}^{2})}\\
&+  \left\| \varphi_{F,\tau}\left(\varphi_{E,t_{n}}(\omega_{0})\right) - \varphi_{E,\tau}\left(\varphi_{E,t_{n}}(\omega_{0})\right) \right\|_{H^{s-4}(\mathbbm{T}^{2})}, 
\end{split}
\end{equation}
for all $t_{n+1}\leq T_{0}+\tau.$\\
Applying Corollary \ref{numstab2}, and using the hypothesis $\tau R_{0}B<1,$ we may find a positive constant $R_{1}$ such that
$$\left\|  \mathcal{S}_{\tau}\left(\mathcal{S}_{\tau}^{n}(\omega_{0})\right) - \mathcal{S}_{\tau}\left(\varphi_{E,t_{n}}(\omega_{0})\right)  \right\|_{H^{s-4}(\mathbbm{T}^{2})}\leq e^{\tau R_{1} (1+B)} \left\| \mathcal{S}_{\tau}^{n}(\omega_{0})-\varphi_{E,t_{n}}(\omega_{0})\right\|_{H^{s-4}(\mathbbm{T}^{2})} + R_{1}B\tau^{3}.$$
Applying now Propositions \ref{error2} and \ref{error1}, we may moreover write that
$$\left\| \mathcal{S}_{\tau}\left(\varphi_{E,t_{n}}(\omega_{0})\right)- \varphi_{F,\tau}\left(\varphi_{E,t_{n}}(\omega_{0})\right) \right\|_{H^{s-4}(\mathbbm{T}^{2})}\leq R_{1}B\tau ^{3}$$
and
$$ \left\| \varphi_{F,\tau}\left(\varphi_{E,t_{n}}(\omega_{0})\right) - \varphi_{E,\tau}\left(\varphi_{E,t_{n}}(\omega_{0})\right) \right\|_{H^{s-4}(\mathbbm{T}^{2})} \leq R_{1}B^{3} \tau^{2}.$$
Therefore,
$$\left\| \mathcal{S}_{\tau}^{n+1}(\omega_{0})-\varphi_{E,t_{n+1}}(\omega_{0})\right\|_{H^{s-4}(\mathbbm{T}^{2})}  \leq e^{\tau R_{1} (1+B)} \left\| \mathcal{S}_{\tau}^{n}(\omega_{0})-\varphi_{E,t_{n}}(\omega_{0})\right\|_{H^{s-4}(\mathbbm{T}^{2})} + R(B)\tau^{2},$$
with
$$R(B)\leq R_{1}\left(B + B^{3}\right).$$
which implies by induction that for all $t_{n}\leq T_{0}+\tau,$
$$\left\| \mathcal{S}_{\tau}^{n}(\omega_{0})-\varphi_{E,t_{n}}(\omega_{0})\right\|_{H^{s-4}(\mathbbm{T}^{2})} \leq R(B)\tau^{2} \sum_{i=0}^{n-1} e^{t_{i}R_{1} (1+B)} \leq \tau R(B)t_{n} e^{R_{1}T_{0}(1+B)} ,$$
which concludes the proof.
\end{proof}

\subsection{Convergence of the semi-discrete scheme}

Here we shall prove our main result, namely Theorem \ref{convergence}. It will be a consequence of Proposition \ref{global}, and the only remaining task is to bootstrap controls of the same order on the $H^{s}$ norm of the exact solution and on the $H^{2}$ norm of the numerical solution up to a fixed time horizon.\\

\begin{Proofof}{Theorem \ref{convergence}}

Let $T$ and $B=B(T)$ be such that
$$\sup_{t\in [0,T]} \left\| \omega(t)\right\|_{H^{s}(\mathbbm{T}^{2})} \leq B.$$
We shall first prove by induction on $n\in \mathbbm{N},$ with $t_{n}\leq T,$ that
$$\sup_{t_{k}\leq t_{n}} \left\| \omega_{k} \right\|_{H^{2}(\mathbbm{T}^{2})}\leq 2B.$$
This clearly holds for $n=0.$ Let now $n\geq 1,$ with $t_{n}\leq T,$ and assume that the following induction hypothesis holds:
$$\sup_{t_{k}\leq t_{n}} \left\| \omega_{k} \right\|_{H^{2}(\mathbbm{T}^{2})}\leq 2B.$$
By applying Proposition \ref{global} with $s=6$ and $T_{0}=t_{n},$ we may find two positive constants 
$R_{0}, R_{1}$ such that, if $\tau R_{0} B<1,$ then for any $t_{k}\leq t_{n+1},$
$$\left\| \omega_{k} \right\|_{H^{2}(\mathbbm{T}^{2})} \leq \left\| \omega(t_{k}) \right\|_{H^{2}(\mathbbm{T}^{2})} + \left\| \omega_{k} - \omega(t_{k}) \right\|_{H^{2}(\mathbbm{T}^{2})} \leq B + \tau t_{k} e^{R_{1}T_{0}(1+B)}R(B),$$
with
$$R(B)\leq R_{1}\left(B + B^{3}\right).$$
If $\tau$ satisfies
$$\tau <\frac{B}{TR(B)e^{TR_{1}(1+B)}},$$
this yields
$$\left\| \omega_{k} \right\|_{H^{2}(\mathbbm{T}^{2})} \leq 2B,$$
for any $t_{k}\leq t_{n+1}.$ This concludes the induction and shows that
$$\sup_{t_{n}\leq T} \left\| \omega_{n} \right\|_{H^{2}(\mathbbm{T}^{2})}\leq 2B.$$
One obtains then the conclusion of Theorem \ref{convergence} by applying Proposition \ref{global} with $T_{0}=T-\tau.$

\end{Proofof}

\section{Appendix: Solving the Poisson equation}

\begin{proposition}
\label{poisson-eq}
Let $f\in L^{2}(\mathbbm{T}^{2}).$ Assume that 
$$\int_{\mathbbm{T}^{2}} f(x)\dd x =0.$$
Then there exists an unique $u\in H^{2}(\mathbbm{T}^{2})$ such that
\begin{equation}
\notag
\left\{
\begin{split}
&\Delta u =f\\
&\int_{\mathbbm{T}^{2}}u(x)\dd x=0.
\end{split}
\right.
\end{equation}
\end{proposition}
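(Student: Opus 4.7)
The plan is to solve the problem by Fourier series, using the Plancherel-type characterisation of $H^{2}(\mathbbm{T}^{2})$ already introduced in \eqref{Sobo}. First I would translate the equation into Fourier coefficients: for $f \in L^{2}(\mathbbm{T}^{2})$ with Fourier coefficients $(\hat{f}_{k})_{k\in\mathbbm{Z}^{2}}$, the assumption $\int_{\mathbbm{T}^{2}} f(x)\dd x = 0$ is exactly $\hat{f}_{0}=0$, and formally a function $u=\sum_{k}\hat{u}_{k}e^{ik\cdot x}$ satisfies $\widehat{(\Delta u)}_{k} = -|k|^{2}\hat{u}_{k}$, so the system $\Delta u = f$, $\int u = 0$ is equivalent to the discrete system $\hat{u}_{k} = -\hat{f}_{k}/|k|^{2}$ for $k\neq 0$ together with $\hat{u}_{0}=0$.

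For existence I would define $u$ by
$$u(x) = -\sum_{k\in\mathbbm{Z}^{2}\setminus\{0\}} \frac{\hat{f}_{k}}{|k|^{2}}\,e^{ik\cdot x},$$
and check that this series converges in $H^{2}(\mathbbm{T}^{2})$ via
$$\|u\|_{H^{2}(\mathbbm{T}^{2})}^{2} \;=\; \sum_{k\neq 0} \langle k\rangle^{4}\,\frac{|\hat{f}_{k}|^{2}}{|k|^{4}} \;\leq\; 4\sum_{k\in\mathbbm{Z}^{2}} |\hat{f}_{k}|^{2} \;=\; 4\,\|f\|_{L^{2}(\mathbbm{T}^{2})}^{2},$$
where I use the elementary inequality $\langle k\rangle^{2} = 1+|k|^{2} \leq 2|k|^{2}$ valid for every $k\in\mathbbm{Z}^{2}\setminus\{0\}$. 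Because the $H^{2}$ norm of $u$ is finite, term-by-term application of $\Delta$ is justified in $L^{2}$, and matching Fourier coefficients gives $\Delta u = f$. The vanishing of $\hat{u}_{0}$ delivers $\int_{\mathbbm{T}^{2}} u(x)\dd x = 0$.

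For uniqueness, suppose $u_{1},u_{2}$ are two solutions and set $w=u_{1}-u_{2}\in H^{2}(\mathbbm{T}^{2})$. Then $\Delta w = 0$ in $L^{2}$ and $\hat{w}_{0}=0$; comparing Fourier coefficients yields $-|k|^{2}\hat{w}_{k}=0$ for every $k$, hence $\hat{w}_{k}=0$ for $k\neq 0$, and $w\equiv 0$ by Parseval. There is no genuine obstacle here — the statement is a textbook fact on the flat torus; the only mild point worth stating carefully is the norm-equivalence from \eqref{Sobo}, which is what allows us to pass freely between the sequence-level definition $\hat{u}_{k}=-\hat{f}_{k}/|k|^{2}$ and the $H^{2}$ bound.
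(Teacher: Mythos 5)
Your proposal is correct and follows essentially the same route as the paper: reduce the problem to the algebraic relation between Fourier coefficients, define $u$ by the resulting series, and read off existence, the $H^{2}$ bound, the zero average, and uniqueness from Parseval. You are slightly more explicit than the paper in verifying the $H^{2}$ convergence directly (the paper defers this to its regularity proposition) and in writing the sign $-|k|^{2}\hat{u}_{k}=\hat{f}_{k}$ correctly, but the substance is identical.
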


\begin{proof}
Assume first that $u\in H(\mathbbm{T}^{2})$ satisfies the equation. Since the average of $f$ on the torus is $0,$ we can write
$$f(x)=\sum_{k\in\mathbbm{Z}^{2*}} \hat{f}_{k} e^{ik\cdot x}.$$
Setting also
$$u(x)=\sum_{k\in\mathbbm{Z}^{2*}}\hat{u}_{k} e^{ik\cdot x}, $$
the Poisson equation simply reads
$$|k|^2\hat{u}_{k}=\hat{f}_{k}, \quad k\in\mathbbm{Z}^{2}.$$
Reciprocally, if the coefficients $\hat{u}_{k}$ are defined by the above formula (for $k\in \mathbbm{Z}^{2*}$), and if we set
$$u(x)=\sum_{k\in\mathbbm{Z}^{2*}}\hat{u}_{k} e^{ik\cdot x},$$
then $u\in H(\mathbbm{T}^{2})$ (this will be precisely proven in the next Proposition), has average $0,$ and satisfies $\Delta u=f.$
\end{proof}

\begin{proposition}
\label{poisson-reg}
Assume that $u$ and $f$ satisfy
$$\Delta u =f.$$
Then for all $s\geq2,$ 
\begin{equation}
\label{reg-L2}
\sup_{0\leq |\alpha| \leq s} \left\|\partial_{x}^{\alpha}u\right\|_{L^{2}(\mathbbm{T}^{2})}\leq C\left\| f\right\|_{H^{s-2}(\mathbbm{T}^{2})}.
\end{equation}
\end{proposition}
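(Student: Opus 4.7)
The approach is to pass to Fourier coefficients, mirroring the argument of Proposition \ref{poisson-eq}.

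First, integrating $\Delta u = f$ over $\mathbbm{T}^2$ forces $\int_{\mathbbm{T}^2} f = 0$. Without loss of generality I may assume $u$ also has zero average, since the stated inequality is invariant under $u\mapsto u-\bar u$ for $|\alpha|\geq 1$ and this is the natural normalization for the $|\alpha|=0$ term. The identity $\Delta e^{ik\cdot x} = -|k|^2 e^{ik\cdot x}$ then gives
$$\hat u_k = -\frac{\hat f_k}{|k|^2}, \qquad k\in\mathbbm{Z}^{2*}.$$

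Next, for a multi-index $\alpha$ with $|\alpha|\leq s$, Parseval applied to $\partial_x^\alpha u$ (whose Fourier coefficients are $(ik)^\alpha \hat u_k$) yields
$$\|\partial_x^\alpha u\|_{L^2(\mathbbm{T}^2)}^2 = \sum_{k\in\mathbbm{Z}^{2*}} \frac{|k^\alpha|^2}{|k|^4}\,|\hat f_k|^2.$$
The essential ingredient is then the multiplier bound
$$\frac{|k^\alpha|^2}{|k|^4} \leq C_s\, \langle k\rangle^{2(s-2)}, \qquad k\in\mathbbm{Z}^{2*},\ |\alpha|\leq s.$$
Since $|k^\alpha|\leq |k|^{|\alpha|}$ and $|k|\geq 1$ for $k\in\mathbbm{Z}^{2*}$, this reduces to estimating $|k|^{2|\alpha|-4}$: when the exponent is nonpositive ($|\alpha|\leq 2$) the quantity is at most $1\leq \langle k\rangle^{2s-4}$ using $s\geq 2$, and otherwise $|k|^{2|\alpha|-4}\leq \langle k\rangle^{2s-4}$ by $|k|\leq\langle k\rangle$. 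Summing over $k$ and comparing with the series defining $\|f\|_{H^{s-2}}$ (see \eqref{Sobo}) closes the argument.

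There is no real obstacle; this is a direct Fourier-side computation of elliptic regularity type. The only mildly delicate point is the zero-mean normalization used to make sense of the $|\alpha|=0$ term, which is entirely standard in the periodic setting.
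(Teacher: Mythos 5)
Your proof is correct and follows essentially the same route as the paper: Parseval on $\partial_x^\alpha u$, the relation $|k|^2\hat u_k=-\hat f_k$, and the multiplier bound $|k|^{2|\alpha|-4}\leq\langle k\rangle^{2s-4}$ for $|\alpha|\leq s$. Your explicit remark on the zero-average normalization of $u$ is a point the paper leaves implicit, but it changes nothing in substance.
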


\begin{proof}
For any $0\leq |\alpha| \leq s,$ we have
\begin{multline*}
\left\| \partial_{x}^{\alpha}u\right\|_{L^{2}(\mathbbm{T}^{2})}^{2}=\sum_{k\in\mathbbm{Z}^{2}} |k|^{2\alpha} \left| \hat{u}_{k}\right|^{2}\leq C \sum_{k\in\mathbbm{Z}^{2}} |k|^{2\alpha-4}|k|^{2}  \left| \hat{u}_{k}\right|^{2} \\
\leq C\sum_{k\in\mathbbm{Z}^{2} }\langle k \rangle^{2s-4}\left| \hat{f}_{k}\right|^{2} =C\left\| f\right\|_{H^{s-2}(\mathbbm{T}^{2})}^{2}.
\end{multline*}
\end{proof}

\end{document}